\def\proofend{\hfill$\Box$} 
\journalname{Fract. Calc. Appl. Anal.} 
\begin{document}

\title{Existence and uniqueness of mild solutions\\ 
for a class of psi-Caputo time-fractional systems\\ 
of order from one to two}

\titlerunning{Existence and uniqueness of mild solutions for a class of psi-Caputo \dots}

\author{Hamza Ben Brahim$^1$ 
\and\\
Fatima-Zahrae El Alaoui$^2$ 
\and\\
Asmae Tajani$^3$ 
\and\\
Delfim F. M. Torres$^4$} 

\authorrunning{H. Ben Brahim \and  F.Z. El Alaoui \and A. Tajani \and D.F.M. Torres} 

\institute{Hamza Ben Brahim$^{1}$
\at
TSI Team, Department of Mathematics, Faculty of Sciences,\\ 
Moulay Ismail University, 11201 Meknes, Morocco \\
\email{brahim.hamzaben@edu.umi.ac.ma} 
\and
Fatima-Zahrae El Alaoui$^{2}$
\at
TSI Team, Department of Mathematics, Faculty of Sciences,\\ 
Moulay Ismail University, 11201 Meknes, Morocco \\
\email{f.elalaoui@umi.ac.ma}
\and
Asmae Tajani$^{3}$
\at
Ecole Normale Superieur, University Hassan II, Casablanca, Morocco\\ 
\& Center for Research and Development in Mathematics and Applications (CIDMA),\\
Department of Mathematics, University of Aveiro, 3810-193 Aveiro, Portugal\\ 
\email{tajaniasmae@ua.pt}
\and
Delfim F. M. Torres$^{4,*}$ 
\at
Center for Research and Development in Mathematics and Applications (CIDMA),\\
Department of Mathematics, University of Aveiro, 3810-193 Aveiro, Portugal\\
\email{delfim@ua.pt} (corresponding author)}


\date{Received: 2 February 2024 / Revised: 9 and 17 July 2025 / Accepted: 18 July 2025}

\maketitle


\begin{abstract}
We prove the existence and uniqueness of mild solutions for a specific class 
of time-fractional $\psi$-Caputo evolution systems with a derivative order 
ranging from 1 to 2 in Banach spaces. By using the properties of cosine 
and sine family operators, along with the generalized Laplace transform, 
we derive a more concise expression for the mild solution. This expression 
is formulated as an integral, incorporating Mainardi's Wright-type function. 
Furthermore, we provide various valuable properties associated with 
the operators present in the mild solution. Additionally, employing the 
fixed-point technique and Gr\"{o}nwall's inequality, we establish the 
existence and uniqueness of the mild solution. To illustrate our results, 
we conclude with an example of a time-fractional equation, presenting 
the expression for its corresponding mild solution.


\keywords{$\psi$-Caputo fractional derivatives\and 
Fractional differential equations\and 
Laplace transform\and 
Fixed point theory\and  
Cosine and sine family operators\and 
Gr\"{o}nwall's inequality.}

\subclass{35A01; 35R11; 46B50.}

\end{abstract} 


\section{Introduction} 
\label{sec:1}

The exploration of fractional calculus has attracted significant attention 
in recent decades, driven by its wide-ranging applications across various fields. 
This interest is primarily motivated by its capability to naturally extend classical 
calculus to non-integer orders, a crucial aspect for modeling complex phenomena. 
In the literature, the definition of fractional differential operators varies, 
encompassing operators such as Hilfer, Riesz, Hadamard, Caputo, Riemann--Liouville, 
among others \cite{kilbas.2006,oldham.1994,podlubny.1998,kkpodlubn,MyID:533}. 
However, the two most 
commonly employed definitions are Riemann--Liouville and Caputo. 
The Riemann--Liouville approach involves limit values of fractional derivatives 
in the initial condition, which can pose challenges in interpretation. 
Conversely, the Caputo fractional derivative is preferred in physical models 
due to its ability to offer a clear interpretation of initial conditions 
and its suitability for measurement \cite{K.Diethelmv}.

The increasing variety of definitions for fractional operators, particularly 
with the emergence of new forms, necessitates an exploration of potential 
connections between them. Traditionally, each fractional operator has been 
studied in isolation, often accompanied by new definitions, properties, 
and sometimes modifications of existing proofs. Consequently, establishing 
a unifying framework to link these seemingly distinct operators is of great 
interest, as it could reduce redundancy and deepen our understanding 
of fractional calculus. One important class of fractional operators includes 
fractional derivatives and integrals defined with respect to another function. 
In this context, such a fractional integral was first introduced by 
Holmgren (1866), building upon Liouville’s earlier work (1835). 
The properties and other aspects of this integral were later examined by Sewell (1937), 
Shelkovnikov (1951), and Chrysovergis (1971). Meanwhile, Erdélyi introduced 
his fractional operators in the 1960s \cite{Erdélyi1,Erdélyi2}, 
and Thomas J.~Osler further developed these concepts through a series of papers 
in the 1970s \cite{T.J.Osler1,T.J.Osler2,T.J.Osler3}. Recently, there has been 
renewed interest in these kind of operators, particularly in their extended form as 
\(\psi\)-Caputo operators. Notably, Almeida \cite{Almeidaa} proposed a generalized 
definition of the Caputo fractional derivative, where the derivative is taken 
with respect to another function \(\psi\). This approach offers greater precision 
in mathematical modeling, as selecting an appropriate function \(\psi\) can 
significantly enhance the accuracy of the model \cite{MyID:526}. In physical 
applications, fractional derivatives with respect to another function have 
been employed to generalize Scott--Blair models with time-varying viscosity 
\cite{Herrmann2}. Similarly, a new formulation of the fractional Dodson diffusion 
equation using these operators was explored in \cite{Herrmann1} to better 
capture memory effects in complex diffusion phenomena. In \cite{Almeid22}, 
it was demonstrated that incorporating fractional derivatives with respect 
to another function provides a more suitable framework for modeling the GDP 
growth rate in the USA.  Furthermore, O.~P.~Agrawal introduced a fractional 
derivative involving two functions-weight and scale functions \cite{Agrawal1}, 
which has inspired additional studies. These include applications such as the 
fractional diffusion equation \cite{Yufeng52}, the fractional advection-diffusion 
model for solute transport in aquifers \cite{Yufeng50}, and the generalized 
Burgers equation, which describes natural phenomena such as traffic 
flow and gas dynamics \cite{Yufeng51}.

For further exploration of fractional calculus and its applications, interested readers 
can refer to the extensive literature, including works such as 
\cite{Bal.Hand.201,chen.2010,koch.1998,Tarasov.Hand.2019}.

Fractional systems have attracted substantial attention in the literature, 
owing to their significance in modeling various physical phenomena 
like diffusion, wave propagation, and viscoelasticity \cite{M.Bolognaaaa,M.Bologna,M.Bolognaa}. 
Currently, numerous researchers are investigating various aspects of fractional differential equations, 
covering topics such as the existence and uniqueness of solutions, 
methods for obtaining exact, explicit, and numerical solutions.
To establish the existence and uniqueness of solutions, mathematicians 
commonly employ the fixed-point theorem, upper-lower solutions, iterative methods, 
and numerical approaches. A pivotal focus lies in the study of fractional evolution equations, 
serving as an abstract framework for a broad spectrum of problems, 
particularly those involving systems evolving over time.
Considerable effort has been dedicated to exploring solutions for these equations. 
Researchers employ diverse tools, such as fixed-point theory, semigroups, 
cosine and sine families operators, and the Laplace transform. For instance, Zhou and Jiao \cite{14} 
investigated the mild solutions of a fractional evolution equation with a Caputo fractional 
derivative, utilizing the Laplace transform and semigroup theory. In a related study, 
Shu and Wang demonstrated the existence and uniqueness of a class of nonlocal fractional systems, 
using the theory of cosine and sine families operators and the Krasnoselskii fixed-point theorem \cite{shu.2012}.
Additionally, there is significant interest in studying fundamental solutions for homogeneous 
fractional evolution equations. Yang establishes suitable conditions 
to ensure the uniqueness and existence of smooth solutions for a specific class of stochastic 
evolution equations with variable delay, characterized by the $\psi$-Caputo fractional derivative, 
using the fixed-point technique, stochastic analysis, and semigroups \cite{Yangyangy}. Furthermore, 
Suechoei and Ngiamsunthorn explored the local and global existence 
and uniqueness of mild solutions to a fractional evolution equation of a specific form, 
generalizing the Laplace transform and probability density functions \cite{Yangyangyyyyyyy}. 
For further literature on mild solutions and their applications, we refer to 
\cite{belmekki.2010,eid.2004,MR4682896,pazy.2020,williams.2021,zhou.2016} 
and the references therein.

Inspired by the works \cite{Yangyangyyyyyyy,Yangyangy}, 
we consider here the following fractional evolution system:
\begin{equation}
\label{k}
\begin{cases}
\mathcal{^CD}^{\alpha ,\psi}_{0^{+}}\Theta(t)
=A \Theta(t)+f(t,\Theta(t)),  \quad t\in \ {[0,T]}, \\	 
\Theta(0)=\Theta_{0}  \ ; \ \dfrac{d}{d t} \Theta(0)=\Theta_{1},
\end{cases}
\end{equation}
where $\psi$ is a strictly increasing function with continuous derivative 
$\psi^\prime$ defined on the interval $[0,T]$, $\mathcal{^CD}^{\alpha ,\psi}_{0^{+}}$ 
is the $\psi$-Caputo fractional derivative of order $1< \alpha\leq 2$, 
$A$ is an infinitesimal generator of a strongly continuous cosine family 
of uniformly bounded linear operators  
${(C(t))}_{t\in {\mathbb{R}}}$ on a Banach space $X$,  
and the nonlinear function $f$ is in  $L^2(]0, T]\times X; X)$. 

The dynamics of system \eqref{k} differs significantly between the subdiffusive 
regime \(\alpha \in (0,1]\) and the superdiffusive regime \(\alpha \in (1,2]\). 
In the subdiffusive case, strong memory effects lead to slow information propagation 
and non-exponential relaxation, often associated with anomalous diffusion in complex media. 
In contrast, the superdiffusive regime exhibits faster diffusion, where inertia-like effects 
can cause transient oscillations and wave-like behavior. As \(\alpha\) increases, 
the system transitions from gradual relaxation to more dynamic responses, influencing 
stability and physical interpretation, particularly in applications 
like viscoelastic media and wave propagation.

The article presents a novel and explicit expression for the mild solution 
of system \eqref{k}, overcoming limitations associated with existing approaches 
that rely on Mainardi's Wright-type function, which is traditionally restricted 
to \(\alpha \in ]0,1[\). By conducting a rigorous analysis, the study constructs 
a new solution operator that allows for the formulation of mild solutions beyond 
this constraint, thereby extending the applicability of fractional differential 
equations. Additionally, the article establishes a compactness result for the 
solution operators under the assumption that the cosine family associated with 
\( A \) is compact, providing valuable insights into the structural properties 
and long-term behavior of the system.

This work is organized as follows. In Section~\ref{sec:02}, 
we recall the fundamental mathematical tools that we will 
use in our main results. In Section~\ref{sec:03}, we construct 
a new and specific form of the mild solution of the system \eqref{k} 
using a probability density. In addition, we give and prove  
several useful properties related to the operators found in the mild solution. 
Then, in Section~\ref{sec:04}, we prove the existence and uniqueness 
of the mild solution of the system \eqref{k} using two fixed-point techniques 
under different assumptions in the data of the problem. 
In Section~\ref{sec:05}, we present a formula for a mild solution using 
the Mittag-Leffler function where $A$ is a second-order operator. Finally, 
we give an example to illustrate the applicability of the theoretical result 
in Section~\ref{sec:06}. We end with a conclusion in Section~\ref{sec:07}. 


\section{Preliminaries}
\label{sec:02}

In this section, we introduce some preliminaries of the cosine and sine families, 
related to an operator, as well as fractional integration and differentiation  
that it will be used throughout the work. 

For $T>0$, let $J=[0,T]$ be a time interval. We take $X$ to be a Banach space endowed 
with the norm $\|\cdot\|_X$. We denote by $L(X)$ the space of all linear bounded 
operators from $X$ to itself with the norm $\|\cdot\|_{L(X)}$, and by $C(J;X)$ 
we denote the space of continuous functions from $J$ into $X$ endowed with the norm 
$\| z \| = \displaystyle\sup_{t\in J} \|z(t)\|_X$.  

For a linear unbounded operator $A : \mathcal{D}(A)\subset X \rightarrow X $ 
with $\mathcal{D}(A)$ its domain, we denote by $\rho(A)$ and $R(\lambda ; A) 
= {(\lambda I -A)}^{-1}$, where $\lambda \in \rho(A)$,  
the resolvent set and resolvent of $A$, respectively.

Let us introduce the definition and some important results 
concerning the cosine and sine families.

\begin{definition}[See \cite{2}]
A one parameter family ${(C(t))}_{t\in \mathbb{R}}$ of bounded linear 
operators mapping the Banach space $X$ into itself is called 
a strongly continuous cosine family if, and only if:
\begin{enumerate}
\item $C(0)=I$,  where $I$ is the identity function of $X$;
\item $C(t+s)+C(t-s) =2C(t)C(s), \quad \forall (s,t) \in \mathbb{R}^2$;
\item $t \mapsto C(t)x$ is continuous in $\mathbb{R}$ for each $x$ in $X$.
\end{enumerate}
We define the sine family ${(S(t))}_{t\in \mathbb{R}}$ 
associated with the strongly continuous cosine family
${(C(t))}_{t \in \mathbb{R}}$ as follows:
$$ 
S(t)x=\int_0^t C(s)xds, \quad x \in X ,\quad t \in \mathbb{R}. 
$$ 
The infinitesimal generator of the cosine family ${(C(t))}_{t\in \mathbb{R}}$ 
is the operator $A$ defined by
$$  
Ax=\frac{d^2}{dt^2}C(0)x, \quad \forall x \in \mathcal{D}(A) 
:= \left \lbrace x \in X \ \ | \ \ C(t)x \in C^2(\mathbb{R}; X)\right\rbrace.
$$
The infinitesimal generator $A$ is a closed and densely defined operator in $X$.
\end{definition} 

\begin{proposition}[See \cite{2}]
\label{jjn}
Let us consider ${(S(t))}_{t \in \mathbb{R}}$  the sine family associated 
with the strongly continuous cosine family  ${(C(t))}_{t \in \mathbb{R}}$. 
The following assertions hold: 
\begin{itemize}
\item $\displaystyle \lim_{\varepsilon\longrightarrow 0} 
\dfrac{S(\varepsilon)x}{\varepsilon}=x$ for any $x\in X$;
\item if $ S(t)x \in  \mathcal{D} (A)$ for $x \in X$, 
then $\dfrac{d}{dt}C(t) x = AS(t)x$.
\end{itemize}
\end{proposition}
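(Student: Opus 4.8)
The plan is to treat the two assertions separately, each resting on the defining relation $S(t)x=\int_0^t C(s)x\,ds$, the strong continuity of the cosine family, and the closedness of its generator $A$. The first is a routine continuity statement, while the second is the substantive one and requires an auxiliary integrated identity before any differentiation can be performed.

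For the first assertion I would argue directly from the fundamental theorem of calculus. Writing
\[
\frac{S(\varepsilon)x}{\varepsilon}-x=\frac{1}{\varepsilon}\int_0^\varepsilon\bigl(C(s)x-x\bigr)\,ds,
\]
I would estimate its norm by $\sup_{0\le s\le\varepsilon}\|C(s)x-x\|_X$. Since $C(0)=I$ and $s\mapsto C(s)x$ is continuous by the third axiom of a cosine family, this supremum tends to $0$ as $\varepsilon\to0$, which gives $\lim_{\varepsilon\to0}S(\varepsilon)x/\varepsilon=x$. Equivalently, this is just the differentiability of $S(\cdot)x$ at the origin with $S'(0)x=C(0)x=x$.

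The second assertion is the heart of the matter, and the key preliminary step I would establish is the integrated identity
\[
C(t)x-x=A\!\int_0^t S(s)x\,ds\qquad(x\in X,\ t\in\mathbb{R}).
\]
To prove it I would set $w(t)=\int_0^t S(s)x\,ds$ and study $C(r)w(t)$ as a function of $r$. Integrating the cosine functional equation $C(\tau+\sigma)+C(\tau-\sigma)=2C(\tau)C(\sigma)$ once yields $S(\tau+h)x-S(\tau-h)x=2C(\tau)S(h)x$, hence the identity $2C(r)S(s)x=S(s+r)x+S(s-r)x$. Substituting this into $C(r)w(t)=\int_0^t C(r)S(s)x\,ds$ and differentiating twice in $r$ under the integral, I would find that the first derivative at $r=0$ vanishes (reflecting that $r\mapsto C(r)$ is even), while the second derivative equals $\tfrac12\bigl[C(t+r)x+C(t-r)x\bigr]-C(r)x$, which is continuous in $r$ and collapses to $C(t)x-x$ at $r=0$. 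By the very definition $Ay=\frac{d^2}{dr^2}C(0)y$, together with $C(\cdot)w(t)\in C^2(\mathbb{R};X)$, this shows $w(t)\in\mathcal D(A)$ and $Aw(t)=C(t)x-x$.

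With this identity in hand I would differentiate in $t$. Since $w'(t)=S(t)x$ by the fundamental theorem of calculus, the forward difference quotient obeys
\[
\frac{C(t+h)x-C(t)x}{h}=A\,y_h,\qquad y_h:=\frac{1}{h}\int_t^{t+h}S(s)x\,ds,
\]
with $y_h\to S(t)x$ as $h\to0$. For $x\in\mathcal D(A)$ one has the standard commutation $AS(s)x=S(s)Ax$, so $s\mapsto AS(s)x$ is continuous and $A y_h=\frac{1}{h}\int_t^{t+h}S(s)Ax\,ds\to S(t)Ax=AS(t)x$; closedness of $A$ then gives $\frac{d}{dt}C(t)x=AS(t)x$. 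Under the weaker hypothesis $S(t)x\in\mathcal D(A)$ the same conclusion follows once the left-hand quotient is known to converge, for then closedness forces its limit to equal $AS(t)x$. I expect the crux of the proof to be precisely this interchange of the unbounded closed operator $A$ with the limiting process: closedness pins down the value of the derivative but not its existence, so the delicate step is to secure the convergence of $A y_h$ rather than merely of $y_h$, which is exactly what the side condition $S(t)x\in\mathcal D(A)$ is there to guarantee.
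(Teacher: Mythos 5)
The paper itself offers no proof of this proposition: it is quoted verbatim as a preliminary from reference \cite{2}, so there is no in-paper argument to compare yours against. Judged on its own terms, your first assertion is handled correctly, and your derivation of the integrated identity $C(t)x-x=A\int_0^t S(s)x\,ds$ for all $x\in X$ (via the twice-differentiated formula $2C(r)w(t)=\int_0^t\bigl[S(s+r)x+S(s-r)x\bigr]ds$ and the definition of $A$ as the second derivative of $C(\cdot)$ at $0$) is correct and is the standard route.

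The gap is in the final differentiation step, and it is exactly where you locate the ``crux'' without resolving it. From $\tfrac{1}{h}\bigl(C(t+h)x-C(t)x\bigr)=Ay_h$ with $y_h\to S(t)x$, closedness of $A$ works only in one direction: \emph{if} $Ay_h$ converges to some $L$, then $S(t)x\in\mathcal D(A)$ and $AS(t)x=L$. It does not let you deduce convergence of $Ay_h$ from the membership $S(t)x\in\mathcal D(A)$; an unbounded closed operator is not sequentially continuous on its domain, so the hypothesis is not ``there to guarantee'' the convergence via closedness --- if the quotient were known to converge, the hypothesis would be superfluous, since closedness would then \emph{produce} $S(t)x\in\mathcal D(A)$. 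You establish convergence only under the strictly stronger assumption $x\in\mathcal D(A)$ (using $AS(s)x=S(s)Ax$), which is not the hypothesis of the statement. A standard repair uses the identity $2S(h)S(t)x=\int_{t-h}^{t+h}S(u)x\,du$, which together with your integrated identity gives $\tfrac{1}{2h}\bigl(C(t+h)x-C(t-h)x\bigr)=\tfrac{1}{h}AS(h)S(t)x$; since $S(t)x\in\mathcal D(A)$ and $AS(h)y=S(h)Ay$ for $y\in\mathcal D(A)$, this equals $\tfrac{1}{h}S(h)\bigl(AS(t)x\bigr)$, which converges to $AS(t)x$ by your first assertion. This settles the symmetric difference quotient cleanly; passing to the two-sided derivative requires in addition writing $C(t+h)x-C(t)x=C(t)\bigl(C(h)x-x\bigr)+S(h)AS(t)x$ via the addition formula $C(t+h)=C(t)C(h)+AS(t)S(h)$ and controlling $h^{-1}\bigl(C(h)x-x\bigr)$, a point your argument does not address at all.
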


\begin{proposition}[See \cite{2}] 
\label{Bouaaaa}
Let $A$ be the infinitesimal generator of a strongly continuous 
cosine family of uniformly bounded linear operators  
${(C(t))}_{t\in {\mathbb{R}}}$ in $X$. We have:
$$ 
C(t)=\displaystyle\sum_{n=0}^{+\infty} \frac{A^nt^{2n}}{(2n)!}
\quad \text{for all} \quad t\in \mathbb{R}.
$$
\end{proposition}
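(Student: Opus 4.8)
The plan is to recognise the right-hand side as the Taylor expansion of the orbit map $t\mapsto C(t)x$ about $t=0$, to compute its derivatives from the structural relations already available, and to control the Taylor remainder using the uniform bound on the cosine family. Throughout I would work on the dense subspace of smooth (indeed analytic) vectors of $A$ and read the asserted operator identity in the strong sense on that subspace.

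First I would record the two basic differentiation rules. By the definition of the sine family, $\frac{d}{dt}S(t)x=C(t)x$, while the second item of Proposition~\ref{jjn} gives $\frac{d}{dt}C(t)x=AS(t)x$ whenever $S(t)x\in\mathcal{D}(A)$. Combining the two yields the abstract cosine equation $\frac{d^{2}}{dt^{2}}C(t)x=AC(t)x$. Evaluating $\frac{d}{dt}C(t)x=AS(t)x$ at $t=0$ and using $S(0)x=0$ shows that $\frac{d}{dt}C(0)x=0$; this is also consistent with the functional equation in the definition, which forces $C(-t)=C(t)$ and hence makes the orbit map even. Together with $C(0)x=x$ this pins down the first two Taylor coefficients.

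Next I would iterate. By induction on $n$, differentiating the cosine equation and commuting $A$ past the derivatives (legitimate since $A$ is closed and $x$ is smooth), I expect $\frac{d^{2n}}{dt^{2n}}C(t)x=A^{n}C(t)x$ and $\frac{d^{2n+1}}{dt^{2n+1}}C(t)x=A^{n+1}S(t)x$. Evaluating at $t=0$, the even-order derivatives give $\frac{d^{2n}}{dt^{2n}}C(0)x=A^{n}x$ while the odd-order ones vanish because $S(0)x=0$. These are precisely the coefficients appearing in the claimed series.

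Finally I would apply Taylor's formula with remainder to the vector-valued function $t\mapsto C(t)x$, writing $C(t)x=\sum_{n=0}^{N}\frac{t^{2n}}{(2n)!}A^{n}x+R_{N}(t)$, and estimate $\|R_{N}(t)\|_{X}\le M\,\frac{|t|^{2N+2}}{(2N+2)!}\,\|A^{N+1}x\|_{X}$ using the uniform bound $\sup_{t}\|C(t)\|_{L(X)}\le M$ together with $A^{N+1}C(s)x=C(s)A^{N+1}x$. For $x$ an analytic vector this remainder tends to $0$ for every $t\in\mathbb{R}$, which yields the series on that dense subspace; the identity then extends by density and closedness of the operators involved. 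The main obstacle is exactly this last analytic point: since $A$ is unbounded the series does not converge in $L(X)$-norm, so the equality must be interpreted strongly on a suitable dense set, and the remainder must be organised so that the factorial denominators and the uniform bound on $C(t)$ absorb the growth of $\|A^{N+1}x\|_{X}$.
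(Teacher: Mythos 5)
The paper does not prove this proposition: it is imported verbatim from \cite{2}, so there is no internal argument to compare yours against. Judged on its own terms, your strategy is the standard one and the computational core is sound: the identities $\frac{d}{dt}S(t)x=C(t)x$ and $\frac{d}{dt}C(t)x=AS(t)x$ (Proposition~\ref{jjn}), the evenness of the orbit map, the induction giving $\frac{d^{2n}}{dt^{2n}}C(0)x=A^{n}x$ with vanishing odd-order derivatives, and the remainder bound $\|R_{N}(t)\|_{X}\le \mathcal{K}\,|t|^{2N+2}\|A^{N+1}x\|_{X}/(2N+2)!$ obtained from the integral form of Taylor's theorem together with \eqref{TT1} are all correct for $x\in\bigcap_{n}\mathcal{D}(A^{n})$.

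The gap is in the final step. First, the density of the relevant class of vectors is asserted but not established: you need not merely smooth vectors but vectors $x$ with $\sum_{n}|t|^{2n}\|A^{n}x\|_{X}/(2n)!<\infty$ for every $t$, and their density requires an argument (for instance Gaussian regularization $x_{m}=c_{m}\int_{\mathbb{R}}e^{-ms^{2}}C(s)x\,ds$, with $A^{k}x_{m}$ computed by integrating by parts against derivatives of the Gaussian and estimated via \eqref{TT1}). Second, and more seriously, the identity cannot ``extend by density and closedness'': the right-hand side $\sum_{n}t^{2n}A^{n}x/(2n)!$ is undefined for $x$ outside $\bigcap_{n}\mathcal{D}(A^{n})$ and need not converge even there, so there is nothing to pass to the limit. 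For unbounded $A$ the stated formula is only true as a strong identity on such entire vectors (which is in fact how the paper uses it in Section~\ref{sec:05}, applying it to eigenfunctions of $A$, each of which is an entire vector); as an unrestricted operator identity on $X$ it holds only when $A$ is bounded. Your closing remark correctly diagnoses exactly this difficulty, but the proof as written then claims the extension anyway, and that is the one step that would fail.
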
 

\begin{lemma}[See \cite{2}] 
\label{23} 
Let $A$ be the infinitesimal  generator of $(C(t))_{t\in \mathbb{R}}$. 
Then, for $\lambda\in \rho(A)$ such that $Re \lambda > 0$ 
and $\lambda^2 \in \rho(A)$, we have:
$$
\lambda  R(\lambda^2; A)x 
= \displaystyle\int_{0}^{+\infty} e^{-\lambda t}C(t)x dt, \quad x \in X,
$$ 
and
$$  
R(\lambda^2; A)x = \displaystyle\int_{0}^{+\infty} e^{-\lambda t}S(t)x dt, \quad x \in X.
$$
\end{lemma}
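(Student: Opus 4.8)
The plan is to read both identities as statements about the Laplace transform of the cosine and sine families, and to prove the first by transforming the second-order abstract Cauchy problem satisfied by $t\mapsto C(t)x$. Write $M:=\sup_{t\in\mathbb{R}}\|C(t)\|_{L(X)}<\infty$, which is finite since the family is uniformly bounded by hypothesis. Then for $\mathrm{Re}\,\lambda>0$ the vector-valued integral $L(\lambda)x:=\int_{0}^{+\infty}e^{-\lambda t}C(t)x\,dt$ converges absolutely, because $\|e^{-\lambda t}C(t)x\|_X\le M e^{-(\mathrm{Re}\,\lambda)t}\|x\|_X$, and $L(\lambda)\in L(X)$ with $\|L(\lambda)\|_{L(X)}\le M/\mathrm{Re}\,\lambda$. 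The goal for the first identity is to show $L(\lambda)x=\lambda R(\lambda^{2};A)x$, equivalently $(\lambda^{2}I-A)L(\lambda)x=\lambda x$; I would establish this first on the dense subspace $\mathcal{D}(A)$ and then extend by continuity.

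Fix $x\in\mathcal{D}(A)$ and set $u(t):=C(t)x$. By the defining properties of the generator together with Proposition~\ref{jjn} and the relation $\frac{d}{dt}S(t)x=C(t)x$, the map $u$ is twice continuously differentiable with $u(0)=x$, $u'(0)=AS(0)x=0$, and $u''(t)=AC(t)x=C(t)Ax$; in particular $u'(t)=AS(t)x$ is bounded on $[0,+\infty)$. Since $A$ is closed and $t\mapsto AC(t)x=C(t)Ax$ is continuous and bounded, $A$ commutes with the convergent integral, so $AL(\lambda)x=\int_{0}^{+\infty}e^{-\lambda t}u''(t)\,dt$. Integrating by parts twice and using $u(0)=x$, $u'(0)=0$, together with the decay $e^{-\lambda t}u(t)\to0$ and $e^{-\lambda t}u'(t)\to0$ as $t\to+\infty$ (which follows from the boundedness of $u$ and $u'$ and from $\mathrm{Re}\,\lambda>0$), the boundary terms vanish and one obtains $\int_{0}^{+\infty}e^{-\lambda t}u''(t)\,dt=\lambda^{2}L(\lambda)x-\lambda x$. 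Comparing the two expressions gives $AL(\lambda)x=\lambda^{2}L(\lambda)x-\lambda x$, that is, $(\lambda^{2}I-A)L(\lambda)x=\lambda x$; since $\lambda^{2}\in\rho(A)$, applying $R(\lambda^{2};A)$ yields $L(\lambda)x=\lambda R(\lambda^{2};A)x$ for every $x\in\mathcal{D}(A)$.

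To pass to arbitrary $x\in X$, I would observe that both $x\mapsto L(\lambda)x$ and $x\mapsto \lambda R(\lambda^{2};A)x$ are bounded linear operators on $X$ that agree on the dense subspace $\mathcal{D}(A)$, hence coincide on all of $X$; this proves the first identity. For the second identity, I would use the definition $S(t)x=\int_{0}^{t}C(s)x\,ds$ together with the elementary rule that the Laplace transform of a primitive divides by $\lambda$: by Fubini's theorem,
\[
\int_{0}^{+\infty}e^{-\lambda t}S(t)x\,dt
=\int_{0}^{+\infty}\!\!\int_{0}^{t}e^{-\lambda t}C(s)x\,ds\,dt
=\frac{1}{\lambda}\int_{0}^{+\infty}e^{-\lambda s}C(s)x\,ds
=\frac{1}{\lambda}L(\lambda)x,
\]
and substituting the first identity $L(\lambda)x=\lambda R(\lambda^{2};A)x$ gives exactly $\int_{0}^{+\infty}e^{-\lambda t}S(t)x\,dt=R(\lambda^{2};A)x$.

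The main obstacle is the rigorous justification of the two steps involving the unbounded operator $A$: interchanging $A$ with the improper vector-valued integral (handled via the closedness of $A$ and the uniform bound on $AC(t)x=C(t)Ax$ for $x\in\mathcal{D}(A)$), and the vanishing of the boundary terms in the double integration by parts (handled via the uniform boundedness of $C(t)x$ and of $\frac{d}{dt}C(t)x=AS(t)x$). A tempting shortcut is to insert the series $C(t)=\sum_{n\ge0}A^{n}t^{2n}/(2n)!$ from Proposition~\ref{Bouaaaa} and integrate term by term to obtain $\sum_{n\ge0}A^{n}\lambda^{-(2n+1)}=\lambda(\lambda^{2}I-A)^{-1}$; however, since $A$ is unbounded this Neumann-type series need not converge in $L(X)$, so I would avoid relying on it and keep the Cauchy-problem argument above, which uses only the resolvent guaranteed to exist by the hypothesis $\lambda^{2}\in\rho(A)$.
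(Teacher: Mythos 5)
The paper offers no proof of Lemma~\ref{23}: it is quoted directly from the reference \cite{2} on cosine operator functions, so there is nothing internal to compare your argument against. On its own merits your proof is correct and is essentially the classical argument from that literature: establish the identity on the dense core $\mathcal{D}(A)$ by Laplace-transforming the second-order Cauchy problem $u''=Au$, $u(0)=x$, $u'(0)=0$ solved by $u(t)=C(t)x$, use the closedness of $A$ to pull it through the convergent vector-valued integral, conclude $(\lambda^{2}I-A)L(\lambda)x=\lambda x$ and invert, then extend by density and boundedness; the sine-family formula follows from $S(t)x=\int_0^t C(s)x\,ds$ and Fubini. One small imprecision: you assert that $u'(t)=AS(t)x$ is bounded on $[0,+\infty)$. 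What you actually control is $\|u'(t)\|_X=\|S(t)Ax\|_X\le \mathcal{K}\,t\,\|Ax\|_X$, since only $C(t)$ is assumed uniformly bounded while $\|S(t)\|_{L(X)}\le \mathcal{K}t$ (this is what \eqref{TT2} with $S(0)=0$ gives); uniform boundedness of $AS(t)x$ in $t$ is not guaranteed by the hypotheses. This does not damage the proof: linear growth together with $\mathrm{Re}\,\lambda>0$ still forces the boundary terms $e^{-\lambda t}u(t)$ and $e^{-\lambda t}u'(t)$ to vanish at infinity and keeps $\int_{0}^{+\infty}e^{-\lambda t}u'(t)\,dt$ absolutely convergent, so the double integration by parts goes through unchanged. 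Your closing caution against integrating the series of Proposition~\ref{Bouaaaa} term by term is also well taken, since for unbounded $A$ that manipulation is purely formal.
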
 

For more details about  strongly continuous cosine and sine families, 
we refer the reader to \cite{Bouaaa,5}.

Next, we present some definitions and properties related 
to fractional integrals and derivatives.

\begin{definition}[See \cite{Almeidaa}]
Let $\alpha > 0$,  $\zeta$ be an integrable function defined on $J $ and
$ \psi \in C^{1}(J)$ be an increasing function such that 
$\psi^{\prime}(t)\neq 0,$ for all $t \in J$.
The left Riemann--Liouville fractional integral of a function $\zeta$ 
with respect to another function $\psi$ is defined by
$$ 
I_{0^+}^{\alpha,\psi}\zeta(t): = \frac{1}{\Gamma(\alpha)}
\int_0^t \psi^{\prime}(t){(\psi(t)-\psi(s))}^{\alpha-1}\zeta(s)ds,
$$
where $\Gamma(\alpha)=\displaystyle\int_{0}^{\infty}t^{\alpha-1}e^{-t}dt$ 
is the Euler Gamma function.	
\end{definition}

\begin{definition}[See \cite{Almeidaa}]
Let $\alpha > 0$,  $n = [\alpha] + 1$, and $ \zeta$, $ \psi \in C^n(J)$ 
be two functions such that $\psi$ is increasing and $\psi^{\prime}(t) \neq 0$  
for all $ t \in J$.  The left Caputo fractional derivative of a function 
$\zeta$ of order $\alpha$  with respect to another function $\psi$ is defined by
$$ 
\mathcal{^CD}^{\alpha ,\psi}_{0^{+}}\zeta(t)
:=I_{0^+}^{n-\alpha,\psi} {\left( \frac{1}{\psi^{\prime}(t)} \frac{d}{dt}\right)}^n\zeta(t). 
$$
\end{definition}

Let us denote 
$$ 
\zeta_{\psi}^{[k]}(t):= {\left( \frac{1}{\psi^{\prime}(t)} \frac{d}{dt}\right)}^k\zeta(t).
$$
The following proposition holds.

\begin{proposition}[See \cite{Almeidaa}]
\label{lalla}
Let $\zeta \in  C^n(J)$ and $ \alpha >0$. Then, 
$$ 
I_{0^+}^{\alpha,\psi} \left[ {\mathcal{^CD}^{\alpha ,\psi}_{0^{+}}\zeta(t)}\right] 
:= \zeta(t)+\displaystyle\sum_{k=0}^{n-1}\dfrac{\zeta_{\psi}^{[k]}(0^+)}{k!}(\psi(t)-\psi(0))^{k}. 
$$
\end{proposition}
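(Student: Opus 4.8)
The plan is to reduce the $\psi$-fractional operators to the classical Riemann--Liouville and Caputo operators through the change of variable $u=\psi(t)$, and then invoke the fundamental theorem of ordinary fractional calculus. Since $\psi\in C^{n}(J)$ is increasing with $\psi^{\prime}\neq 0$, the inverse $\psi^{-1}$ exists and is of class $C^{n}$ on $\psi(J)$, so I may set $g:=\zeta\circ\psi^{-1}$, equivalently $\zeta=g\circ\psi$, with $g\in C^{n}(\psi(J))$.

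First I would verify, by induction on $k$ with the chain rule, the identity $\zeta_{\psi}^{[k]}(t)=g^{(k)}(\psi(t))$ for $0\le k\le n$. The base case $k=1$ is immediate, since $\frac{1}{\psi^{\prime}(t)}\frac{d}{dt}g(\psi(t))=g^{\prime}(\psi(t))$, and the inductive step is identical. In particular $\zeta_{\psi}^{[k]}(0^{+})=g^{(k)}(\psi(0))$, which produces the coefficients appearing in the statement. Next, the substitution $r=\psi(s)$, $dr=\psi^{\prime}(s)\,ds$, in the defining integral of $I_{0^{+}}^{\alpha,\psi}$ shows that $I_{0^{+}}^{\alpha,\psi}\zeta(t)=\bigl(I_{\psi(0)^{+}}^{\alpha}g\bigr)(\psi(t))$, i.e.\ the $\psi$-integral of $\zeta$ is the ordinary Riemann--Liouville integral of $g$ evaluated at $\psi(t)$. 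Combining the two facts gives $\mathcal{^CD}^{\alpha,\psi}_{0^{+}}\zeta(t)=\bigl(I_{\psi(0)^{+}}^{\,n-\alpha}g^{(n)}\bigr)(\psi(t))=\bigl({}^{C}\!D_{\psi(0)^{+}}^{\alpha}g\bigr)(\psi(t))$, the classical Caputo derivative of $g$; applying $I_{0^{+}}^{\alpha,\psi}$ once more, the left-hand side of the claimed identity equals $\bigl(I_{\psi(0)^{+}}^{\alpha}\,{}^{C}\!D_{\psi(0)^{+}}^{\alpha}g\bigr)(\psi(t))$.

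It then remains to use the semigroup law $I_{a^{+}}^{\alpha}I_{a^{+}}^{\,n-\alpha}=I_{a^{+}}^{n}$, which turns this into $\bigl(I_{\psi(0)^{+}}^{n}g^{(n)}\bigr)(\psi(t))$, together with the elementary Taylor identity $\bigl(I_{a^{+}}^{n}g^{(n)}\bigr)(u)=g(u)-\sum_{k=0}^{n-1}\frac{g^{(k)}(a)}{k!}(u-a)^{k}$ obtained by integrating $g^{(n)}$ exactly $n$ times. Evaluating at $u=\psi(t)$ and $a=\psi(0)$ and translating back through $g(\psi(t))=\zeta(t)$ and $g^{(k)}(\psi(0))=\zeta_{\psi}^{[k]}(0^{+})$ reconstructs $\zeta(t)$ together with the polynomial correction terms $\sum_{k=0}^{n-1}\frac{\zeta_{\psi}^{[k]}(0^{+})}{k!}(\psi(t)-\psi(0))^{k}$, which is the asserted formula. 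The one genuinely technical step is the semigroup law for the fractional integrals, whose proof rests on a Fubini interchange followed by the Beta-integral evaluation $\int_{\sigma}^{u}(u-r)^{\alpha-1}(r-\sigma)^{\,n-\alpha-1}\,dr=B(\alpha,n-\alpha)\,(u-\sigma)^{\,n-1}$; the remaining manipulations are bookkeeping changes of variables.
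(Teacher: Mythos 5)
The paper offers no proof of this proposition; it is quoted from Almeida's work, so there is nothing internal to compare against. Your argument is correct and is essentially the standard (and Almeida's own) route: conjugating by $\psi$ via $g=\zeta\circ\psi^{-1}$ turns every $\psi$-operator into its classical counterpart ($\zeta_{\psi}^{[k]}=g^{(k)}\circ\psi$, $I_{0^+}^{\alpha,\psi}\zeta=(I_{\psi(0)^+}^{\alpha}g)\circ\psi$, hence $\mathcal{^CD}^{\alpha,\psi}_{0^+}\zeta=({}^{C}D_{\psi(0)^+}^{\alpha}g)\circ\psi$), after which the semigroup law $I^{\alpha}I^{n-\alpha}=I^{n}$ and the Taylor formula with integral remainder finish the job. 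All the individual steps check out, including the Beta-integral computation behind the semigroup property.

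One point deserves attention: your Taylor identity $\bigl(I_{a^{+}}^{n}g^{(n)}\bigr)(u)=g(u)-\sum_{k=0}^{n-1}\frac{g^{(k)}(a)}{k!}(u-a)^{k}$ correctly produces a \emph{minus} sign in front of the polynomial correction, so your derivation yields
$$
I_{0^+}^{\alpha,\psi}\left[\mathcal{^CD}^{\alpha ,\psi}_{0^{+}}\zeta(t)\right]
=\zeta(t)-\sum_{k=0}^{n-1}\frac{\zeta_{\psi}^{[k]}(0^+)}{k!}\,(\psi(t)-\psi(0))^{k},
$$
whereas the statement as printed in the paper carries a plus sign. The minus sign is the correct one: the paper itself uses the minus version when it invokes this proposition in the proof of the equivalence with the integral equation (where it writes $I_{0^{+}}^{\alpha, \psi}(\mathcal{^CD}^{\alpha ,\psi}_{0^{+}} \Theta(t))= \Theta(t)-\Theta^{[0]}_{\psi}(0)-\Theta^{[1]}_{\psi}(0)(\psi(t)-\psi(0))$). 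So the plus sign in the displayed proposition is a typo, and your final sentence should not claim agreement with ``the asserted formula'' as written; state the minus-sign version explicitly.
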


Let us now recall the  definition and some propositions of the  
Laplace transform and the generalized Gronwall's inequality  
with respect to the function $\psi$.

\begin{definition}[See \cite{T.Abdeljawadd}]
Let  $ \zeta$, $ \psi $: $ [a,+\infty[\rightarrow \mathbb{R}$ 
be real valued functions such that $ \zeta$ is
continuous and $ \zeta >0$ on $ [0,+\infty [$. 
The generalized Laplace transform of $ \zeta$ is
defined by
$$ 
\mathcal{L}_{\psi}\left\lbrace \zeta (t)\right\rbrace (\lambda)
:= \int_a^{+\infty} e^{-\lambda(\psi(s)-\psi(a))}\zeta(s)\psi^{\prime}(s)ds
$$
for all values of $\lambda$.
\end{definition}

\begin{proposition}[See \cite{T.Abdeljawadd}]
\label{lokl}
Let $\alpha > 0$ and $ \zeta >0$ be a piecewise continuous function on each interval
$[0, t]$ of $\psi(t)$-exponential order, i.e., $\exists M,a,T>0$  
such that  $|\zeta(t)|< M e^{a\psi(t)}$ for all $t\geq T$.
Then, 
$$ 
\begin{aligned}
&1)\quad \mathcal{L}_{\psi}\left\lbrace  (1)\right\rbrace (\mathcal{G}): = \dfrac{1}{\mathcal{G}};\\
&2)\quad \mathcal{L}_{\psi}\left\lbrace I_{0^+}^{\alpha,\psi} \zeta (t)\right\rbrace (\mathcal{G})
: = \dfrac{\mathcal{L}_{\psi} \left\{\zeta(t)\right\}}{\mathcal{G}^\alpha};
\end{aligned}
$$
for all $\mathcal{G}\in\mathbb{C}$ such that $Re(\mathcal{G})\neq 0$.
\end{proposition}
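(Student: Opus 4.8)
The plan is to prove both identities by reducing the generalized Laplace transform to the ordinary one through the change of variables $u=\psi(s)-\psi(0)$, which linearizes the exponential weight $e^{-\mathcal{G}(\psi(s)-\psi(0))}$ appearing in the definition.

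For the first identity, I would take $\zeta\equiv 1$ and $a=0$ in the definition, giving $\mathcal{L}_{\psi}\{1\}(\mathcal{G})=\int_0^{+\infty}e^{-\mathcal{G}(\psi(s)-\psi(0))}\psi^{\prime}(s)\,ds$. Substituting $u=\psi(s)-\psi(0)$, so that $du=\psi^{\prime}(s)\,ds$ and the limits $s=0,\infty$ map to $u=0,\infty$ (using that $\psi$ is increasing with $\psi^{\prime}\neq 0$), this collapses to the standard integral $\int_0^{+\infty}e^{-\mathcal{G}u}\,du=1/\mathcal{G}$, valid once $\mathrm{Re}(\mathcal{G})>0$.

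For the second identity, the key is to recognize the $\psi$-fractional integral as a $\psi$-convolution and to transform it by Fubini's theorem. I would insert the definition of $I_{0^+}^{\alpha,\psi}\zeta$ into the Laplace integral, obtaining a double integral over the triangular region $0\le s\le t<\infty$, and then interchange the order of integration. Splitting $e^{-\mathcal{G}(\psi(t)-\psi(0))}=e^{-\mathcal{G}(\psi(t)-\psi(s))}e^{-\mathcal{G}(\psi(s)-\psi(0))}$ and pulling the $s$-dependent factor out of the inner ($t$-)integral, I would apply the substitution $v=\psi(t)-\psi(s)$, for which the factor $\psi^{\prime}(t)$ carried by the Laplace weight is exactly the Jacobian $dv=\psi^{\prime}(t)\,dt$. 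The inner integral then becomes the Euler Gamma integral $\int_0^{+\infty}e^{-\mathcal{G}v}v^{\alpha-1}\,dv=\Gamma(\alpha)/\mathcal{G}^{\alpha}$. The factor $\Gamma(\alpha)$ cancels the $1/\Gamma(\alpha)$ in the definition, and the remaining $s$-integral is exactly $\mathcal{L}_{\psi}\{\zeta\}(\mathcal{G})$, yielding $\mathcal{L}_{\psi}\{\zeta\}(\mathcal{G})/\mathcal{G}^{\alpha}$ as claimed.

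The main obstacle is not the formal manipulation but the justification of the interchange of integrals. Here I would invoke the Tonelli--Fubini theorem, and this is precisely where the hypotheses enter: the $\psi(t)$-exponential order bound $|\zeta(t)|<Me^{a\psi(t)}$ together with $\mathrm{Re}(\mathcal{G})$ sufficiently large guarantees absolute integrability of the double integrand, so that Fubini applies and the Gamma integral converges. I would also note that, although the statement is phrased for $\mathrm{Re}(\mathcal{G})\neq 0$, convergence in fact requires $\mathrm{Re}(\mathcal{G})$ large enough relative to the growth rate $a$.
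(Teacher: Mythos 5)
The paper does not prove this proposition: it is quoted verbatim from the cited reference \cite{T.Abdeljawadd}, so there is no in-paper argument to compare against. Your proof is correct and is essentially the standard one from that reference (change of variables $u=\psi(s)-\psi(0)$ for the first identity; Fubini plus the factorization $e^{-\mathcal{G}(\psi(t)-\psi(0))}=e^{-\mathcal{G}(\psi(t)-\psi(s))}e^{-\mathcal{G}(\psi(s)-\psi(0))}$ and the Gamma integral for the second), and you rightly observe that validity requires $\mathrm{Re}(\mathcal{G})$ large enough relative to the growth rate $a$, not merely $\mathrm{Re}(\mathcal{G})\neq 0$. The only point worth adding is that sending the upper limit to $+\infty$ after substitution tacitly uses $\psi(t)\to+\infty$ as $t\to+\infty$, which does not follow from $\psi$ being increasing with $\psi^{\prime}\neq 0$ alone and should be stated as a hypothesis.
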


\begin{definition}[See \cite{T.Abdeljawadd}]
\label{jijij}
Let $ \zeta$ and $\varphi$ be two functions that are continuous 
at each interval $[0,T]$. We define the generalized convolution 
of $ \zeta$ and $\varphi$  by
$$  
(\zeta \ast_{\psi}\varphi ) (t)
:=\int_a^t \zeta(s)\varphi \left( \psi^{-1}(\psi(t)
+\psi(a)-\psi(s)\right)  \psi^\prime(s) ds,  \quad \forall a \in J.        
$$
\end{definition}

\begin{proposition}[See \cite{T.Abdeljawadd}]
\label{jijijj}
Let $\delta$ and $ \kappa $ be two piecewise functions that 
are continuous at each interval $[0 ; T ]$ of $\psi(t)$-exponential order. 
Then,  
$$ 
\mathcal{L}_\psi \lbrace \delta \ast_\psi \kappa \rbrace 
= \mathcal{L}_\psi \lbrace \delta \rbrace \times  \mathcal{L}_\psi \lbrace \kappa \rbrace. 
$$
\end{proposition}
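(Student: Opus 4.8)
The plan is to reduce the identity to the ordinary convolution theorem for the classical Laplace transform, the cleanest route being the change of variables $u=\psi(s)-\psi(a)$, which conjugates $\mathcal{L}_{\psi}$ into the standard Laplace operator and $\ast_{\psi}$ into the ordinary convolution. A more self-contained alternative, which I would carry out in detail, is to expand both sides directly and exploit Fubini's theorem. Taking this direct route, I would first insert the definition of $\ast_{\psi}$ from Definition~\ref{jijij} into the definition of $\mathcal{L}_{\psi}$, obtaining the iterated integral
$$
\mathcal{L}_\psi\{\delta\ast_\psi\kappa\}(\lambda)
= \int_a^{+\infty}\!\!\int_a^t e^{-\lambda(\psi(t)-\psi(a))}\,\delta(s)\,
\kappa\!\left(\psi^{-1}(\psi(t)+\psi(a)-\psi(s))\right)\psi^{\prime}(s)\,\psi^{\prime}(t)\,ds\,dt,
$$
an integral over the triangular region $\{\,a\le s\le t<+\infty\,\}$.

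Next I would interchange the order of integration, rewriting the domain as $s\in[a,+\infty)$ and $t\in[s,+\infty)$, and then perform the substitution $r=\psi^{-1}(\psi(t)+\psi(a)-\psi(s))$ in the inner $t$-integral. Since $\psi\in C^{1}$ is increasing with $\psi^{\prime}\neq 0$, the inverse $\psi^{-1}$ is well defined and differentiable, so the substitution is legitimate and yields $\psi^{\prime}(r)\,dr=\psi^{\prime}(t)\,dt$, while the limits $t=s$ and $t\to+\infty$ become $r=a$ and $r\to+\infty$. The crucial algebraic observation is the additivity
$$
\psi(t)-\psi(a)=\bigl(\psi(r)-\psi(a)\bigr)+\bigl(\psi(s)-\psi(a)\bigr),
$$
which makes the exponential weight factor as $e^{-\lambda(\psi(t)-\psi(a))}=e^{-\lambda(\psi(s)-\psi(a))}e^{-\lambda(\psi(r)-\psi(a))}$. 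After the substitution the inner integral collapses to $e^{-\lambda(\psi(s)-\psi(a))}\,\mathcal{L}_\psi\{\kappa\}(\lambda)$, and the remaining outer integral in $s$ is exactly $\mathcal{L}_\psi\{\delta\}(\lambda)$, giving the claimed product.

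The step I expect to require the most care is the justification of Fubini's theorem, that is, the absolute convergence of the double integral that legitimizes the interchange of the order of integration. This is precisely where the hypothesis that $\delta$ and $\kappa$ are piecewise continuous and of $\psi(t)$-exponential order is used: writing $|\delta(t)|,\,|\kappa(t)|\le M e^{c\psi(t)}$ for suitable constants $M,c>0$ and all large $t$, one bounds the integrand by a product of two absolutely integrable exponential factors provided $\operatorname{Re}(\lambda)$ is sufficiently large, so that Tonelli's theorem guarantees finiteness and Fubini then applies. I would accordingly state the identity for $\operatorname{Re}(\lambda)$ in the common half-plane of convergence of $\mathcal{L}_\psi\{\delta\}$ and $\mathcal{L}_\psi\{\kappa\}$. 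Everything else is routine bookkeeping of the change of variables and the factorization above.
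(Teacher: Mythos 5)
The paper offers no proof of this proposition — it is quoted directly from Jarad and Abdeljawad \cite{T.Abdeljawadd} as a preliminary — so there is nothing internal to compare against. Your argument is correct and is essentially the standard proof found in that reference: expand the iterated integral, justify the interchange of order via Tonelli using the $\psi(t)$-exponential order hypothesis for $\operatorname{Re}(\lambda)$ large, substitute $r=\psi^{-1}(\psi(t)+\psi(a)-\psi(s))$, and factor the exponential through the identity $\psi(t)-\psi(a)=(\psi(r)-\psi(a))+(\psi(s)-\psi(a))$.
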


\begin{theorem}[Gronwall's inequality \cite{Adjabi}] 
\label{nnn}
Let $\varsigma$, $\sigma $ be two integrable functions and 
$\Lambda $ be a continuous function on $[a, b]$. Let 
$\Lambda \in C^1([a, b])$ be an increasing function 
such that $ \psi^{\prime}(t) \neq 0 $  
for all $t$ in $[a, b]$. Assume that
\begin{enumerate}
\item  $ \varsigma $ and $ \sigma$ are non-negative;
\item  $\Lambda $ is non-negative and non-decreasing.
\end{enumerate}
If 
$$ 
\varsigma(t) \leq  \sigma(t) + \Lambda(t)\int_a^t (\psi(t) 
- \psi(s))^{\alpha -1}\varsigma(s)\psi^{\prime}(s) ds,\quad \forall t \in [a, b],
$$
then
$$   
\varsigma(t) \leq \sigma(t) +    \int_a^t \sum_{j=1}^{+\infty}
\frac{\left[ \Lambda(t)\Gamma(\alpha)\right]^j }{\Gamma(j\alpha)}(\psi(t) 
- \psi(s))^{j\alpha -1}\sigma(s)\psi^{\prime}(s) ds,\quad \forall t \in [a, b].
$$
\end{theorem}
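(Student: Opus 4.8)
The plan is to recast the hypothesis as a monotone fixed-point-type inequality and then iterate. For a non-negative integrable $\phi$ on $[a,b]$ I would set
$$
(\mathcal{B}\phi)(t):=\Lambda(t)\int_a^t(\psi(t)-\psi(s))^{\alpha-1}\phi(s)\psi'(s)\,ds,
$$
so that the assumption becomes $\varsigma(t)\le\sigma(t)+(\mathcal{B}\varsigma)(t)$. Since the kernel $(\psi(t)-\psi(s))^{\alpha-1}\psi'(s)$ is non-negative on $\{a\le s\le t\}$ (using that $\psi$ is increasing) and $\Lambda\ge 0$, the operator $\mathcal{B}$ is positivity-preserving and monotone. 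Applying $\mathcal{B}$ to the hypothesis and substituting repeatedly, a straightforward induction yields
$$
\varsigma(t)\le\sum_{k=0}^{n-1}(\mathcal{B}^k\sigma)(t)+(\mathcal{B}^n\varsigma)(t),\qquad n\ge 1.
$$

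The heart of the argument is an explicit bound for the iterates. I claim that for every non-negative $\phi$,
$$
(\mathcal{B}^n\phi)(t)\le\frac{[\Lambda(t)\Gamma(\alpha)]^n}{\Gamma(n\alpha)}\int_a^t(\psi(t)-\psi(s))^{n\alpha-1}\phi(s)\psi'(s)\,ds,
$$
and I would prove this by induction on $n$, the case $n=1$ being the definition of $\mathcal{B}$. For the step, I would insert the bound for $\mathcal{B}^k$ into $\mathcal{B}^{k+1}\phi=\mathcal{B}(\mathcal{B}^k\phi)$, use that $\Lambda$ is non-decreasing to replace the inner factor $\Lambda(\tau)^k$ by $\Lambda(t)^k$ for $\tau\le t$ (this is exactly where hypothesis (2) is needed), interchange the order of integration by Tonelli's theorem, and evaluate the resulting inner integral by the generalized Beta identity
$$
\int_s^t(\psi(t)-\psi(\tau))^{\alpha-1}(\psi(\tau)-\psi(s))^{k\alpha-1}\psi'(\tau)\,d\tau=\frac{\Gamma(\alpha)\Gamma(k\alpha)}{\Gamma((k+1)\alpha)}(\psi(t)-\psi(s))^{(k+1)\alpha-1}.
$$
This identity follows from the substitution $u=(\psi(\tau)-\psi(s))/(\psi(t)-\psi(s))$, which reduces it to the classical Beta function $B(\alpha,k\alpha)$; collecting the Gamma factors reproduces the claimed bound for $n=k+1$.

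Finally, applying the estimate with $\phi=\varsigma$, bounding $\Lambda$ by its maximum $L$ on $[a,b]$ and (for $n\alpha\ge 1$) bounding $(\psi(t)-\psi(s))^{n\alpha-1}$ by $(\psi(b)-\psi(a))^{n\alpha-1}$, gives
$$
0\le(\mathcal{B}^n\varsigma)(t)\le\frac{[L\Gamma(\alpha)]^n(\psi(b)-\psi(a))^{n\alpha-1}}{\Gamma(n\alpha)}\int_a^b\varsigma(s)\psi'(s)\,ds,
$$
whose right-hand side is the general term of a convergent Mittag--Leffler-type series and hence tends to $0$ as $n\to\infty$, since $\Gamma(n\alpha)$ eventually dominates any geometric factor. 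Letting $n\to\infty$ in the iterated inequality, the remainder vanishes and I am left with $\varsigma(t)\le\sum_{k=0}^{\infty}(\mathcal{B}^k\sigma)(t)$. Separating the term $\mathcal{B}^0\sigma=\sigma$, applying the inductive estimate to each $\mathcal{B}^j\sigma$ for $j\ge 1$, and interchanging summation and integration (justified by Tonelli, as all terms are non-negative) produces exactly the asserted conclusion. The main obstacle is the inductive estimate of the second paragraph: one must ensure that propagating $\Lambda$ through the iteration does not leave a nested product of values of $\Lambda$ at intermediate points—monotonicity of $\Lambda$ is precisely what allows replacing each such factor by $\Lambda(t)$—and the Beta evaluation must be performed against the $\psi$-weighted measure $\psi'(\tau)\,d\tau$ rather than Lebesgue measure.
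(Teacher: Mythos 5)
Your argument is correct; the paper states this theorem only as a quoted preliminary (cited from the reference \cite{Adjabi} of Sousa and Oliveira) and gives no proof of its own, and your iteration of the monotone operator $\mathcal{B}$, with the inductive kernel estimate obtained from the $\psi$-weighted Beta identity and the vanishing of the remainder $\mathcal{B}^{n}\varsigma$ via the growth of $\Gamma(n\alpha)$, is precisely the standard proof used in that source. No gaps.
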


In the following, we present the definition of the two parameter Mittag-Leffler 
function, which is a useful function to obtain the expression of the mild solutions.

\begin{definition}[See \cite{6}]
The two parameter Mittag-Leffler function is defined as
$$
E_{\tau,\eta}(z)=\sum_{j=0}^{+\infty}\frac{z^j}{\Gamma(j\tau+\eta )}, 
\quad \tau,\ \eta >0, \quad z\in \mathbb{C}.
$$
\end{definition}  

For useful properties of the Mittag-Leffler function, 
we refer the reader to \cite{podlubny.1998,8}.


\section{Mild solution, resolvent operator and properties}
\label{sec:03}

In order to define the mild solution of the system \eqref{k}, 
we first consider Mainardi's Wright-type function $M_q$ (see  \cite{podlubny.1998}): 
$$
M_q(z)=\displaystyle\sum_{n=0}^{+\infty}  
\frac{(-z)^n}{n!\Gamma(1-q(n+1))},\quad z\in \mathbb{R}^+.
$$
The Mainardi's Wright-type function possesses many useful properties. 
We recall the following useful lemma.

\begin{lemma}[See \cite{Shuibo}]
\label{jjjjj}
For any $t > 0$, the Mainardi's Wright-type function satisfies:
\begin{enumerate}
\item $M_q(\theta) >0, \ \forall \theta\geq0$;
\item $ \displaystyle\int_{0}^{+\infty} \theta^n M_q(\theta) d\theta 
= \frac{\Gamma(1+n)}{\Gamma(1+nq)}, \quad   n > -1,\ 0<q<1$.
\end{enumerate} 
For additional properties of the Mainardi's Wright-type function, 
we refer the reader to \cite{Shuibo1}.
\end{lemma}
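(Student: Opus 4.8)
The statement bundles two facts of rather different character, so my plan is to treat them separately. The positivity in item~(1) is the genuinely delicate part and I would establish it through an integral representation of $M_q$ rather than from the defining alternating series, whereas the moment identity in item~(2) follows cleanly from a Mellin-transform computation.

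For item~(2) I would \emph{not} attempt to integrate the series term by term, since $\int_0^{+\infty}\theta^{n+k}\,d\theta$ diverges for every term. Instead I would compute the Mellin transform of $M_q$. Recognizing $M_q$ as the Wright function $M_q(z)=W_{-q,\,1-q}(-z)$, i.e. a series of the form $\sum_{k\geq 0}\frac{(-1)^k}{k!}\phi(k)\,\theta^{k}$ with $\phi(k)=\frac{1}{\Gamma(1-q(k+1))}$, Ramanujan's master theorem yields
$$\int_0^{+\infty}\theta^{s-1}M_q(\theta)\,d\theta=\Gamma(s)\,\phi(-s)=\frac{\Gamma(s)}{\Gamma\!\left(1+q(s-1)\right)},$$
valid in a suitable vertical strip of the complex $s$-plane. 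Setting $s=n+1$ gives exactly $\int_0^{+\infty}\theta^{n}M_q(\theta)\,d\theta=\frac{\Gamma(1+n)}{\Gamma(1+nq)}$ for $n>-1$. As a consistency check, $n=0$ returns $\int_0^{+\infty}M_q=1$, i.e. $M_q$ is a probability density. The point requiring care is verifying the hypotheses of the master theorem: that $\phi$ extends analytically with suitable growth in the relevant half-plane, which one checks using the reflection formula for $\Gamma$ to rewrite $1/\Gamma(1-q(k+1))$ and bound it. An alternative, valid for integer $n$, is to use the Laplace identity $\int_0^{+\infty}e^{-s\theta}M_q(\theta)\,d\theta=E_q(-s)$ and read off the moment as $(-1)^n\frac{d^n}{ds^n}E_q(-s)\big|_{s=0}=\frac{n!}{\Gamma(1+nq)}$, since only the $k=n$ term of the Mittag-Leffler series survives differentiation at $s=0$.

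For item~(1), positivity, I would start from the Hankel-contour representation
$$M_q(z)=\frac{1}{2\pi i}\int_{Ha}\sigma^{q-1}e^{\sigma-z\sigma^{q}}\,d\sigma,$$
which follows from the integral representation of the Wright function for $0<q<1$. Deforming $Ha$ onto the rays $\arg\sigma=\pm\pi$ and collapsing to a real integral produces a representation of $M_q(\theta)$ as an integral of a manifestly non-negative integrand, the relevant trigonometric factors keeping a fixed sign because $0<q<1$; this gives $M_q(\theta)\ge 0$, and strict positivity for $\theta\ge 0$ follows since $M_q$ is real-analytic and cannot vanish on an interval. Equivalently, one may invoke the probabilistic interpretation: via the standard change of variable $M_q(\theta)=\tfrac1q\,\theta^{-1-1/q}\,\ell_q(\theta^{-1/q})$, the function $M_q$ is the density of a one-sided $q$-stable law, which is non-negative by construction.

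I expect the positivity in item~(1) to be the main obstacle, since it cannot be seen directly from the alternating series and genuinely requires either the contour representation together with a sign analysis of the resulting real integral, or an appeal to the stable-density interpretation. By contrast, item~(2) is essentially a bookkeeping computation once the Mellin/Ramanujan route (or the Laplace route for integer $n$) is set up.
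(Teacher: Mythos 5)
This lemma is not proved in the paper at all: it is imported verbatim from Mainardi's monograph \cite{Shuibo}, so there is no in-paper argument to compare against, and your outline is essentially the standard derivation found in that reference (positivity via the connection with one-sided stable densities or the Hankel representation; moments via a Mellin/Laplace computation). Two steps of your plan, however, would not survive as written. First, for item~(2), Ramanujan's master theorem in Hardy's rigorous form requires the coefficient function to satisfy $|\phi(z)|\leq C e^{P\,\mathrm{Re}\,z+A|\mathrm{Im}\,z|}$ with $A<\pi$ on a right half-plane; here the reflection formula gives $\phi(k)=1/\Gamma(1-q(k+1))=\Gamma(q(k+1))\sin(\pi q(k+1))/\pi$, which grows like $\Gamma(qk)$, i.e.\ super-exponentially, so the very bound you propose to use in order to verify the hypotheses actually exhibits their failure. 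The Mellin transform $\Gamma(s)/\Gamma(1+q(s-1))$ is nonetheless correct, but the rigorous route is to insert the Hankel-contour representation of $M_q$, integrate $\int_0^{+\infty}\theta^{s-1}e^{-\theta\sigma^{q}}d\theta=\Gamma(s)\sigma^{-qs}$ first, and finish with Hankel's formula for $1/\Gamma$ --- or, for the integer moments $n=0,1,2$, which are the only ones this paper ever uses (in Proposition~\ref{aa} and the compactness arguments), your Laplace-transform alternative $\int_0^{+\infty}e^{-s\theta}M_q(\theta)\,d\theta=E_q(-s)$ is clean and sufficient, modulo justifying differentiation under the integral sign via the known super-exponential decay of $M_q$ at infinity.

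Second, for item~(1), the step ``$M_q\geq 0$ and real-analytic, hence $M_q>0$'' is not valid: a nonnegative real-analytic function can vanish at isolated points (consider $x^2$), so ruling out vanishing on an interval does not give strict positivity everywhere. The workable version is your second suggestion: the identity $\phi_q(\theta)=q\theta^{-q-1}M_q(\theta^{-q})$ with $\mathcal{L}\{\phi_q\}(\lambda)=e^{-\lambda^{q}}$ --- which is exactly the function the paper itself introduces in the proof of Theorem~\ref{bbb} --- identifies $M_q$ with (a rescaling of) a one-sided $q$-stable density, whence $M_q\geq 0$; strict positivity on $(0,+\infty)$ then follows from the known strict positivity of such stable densities on their support (this is a theorem, not a triviality, and should be cited), and $M_q(0)=1/\Gamma(1-q)>0$ covers the endpoint. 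With these two repairs the proposal is a correct and complete proof of the cited lemma.
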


In addition, we consider the strongly continuous cosine family  
${(C(t))}_{t\in {\mathbb{R}}}$ such that  there exists a constant 
$\mathcal{K} \geq 1$ such that
\begin{equation}
\label{TT1}
\Vert C(t)\Vert_{_{L(X)}} \leq \mathcal{K},\quad  
\text{	for all $t\in \mathbb{R}$}.
\end{equation} 
Furthermore, for any $(t_1 ,\ t_2) 
\in \mathbb{R}\times\mathbb{R}$, we have
\begin{equation}
\label{TT2}
\Vert S(t_1)-S(t_2)\Vert_{_{L(X)}} \leq \mathcal{K} \vert t_2-t_1\vert.
\end{equation} 

For the rest of this paper,  we set 
$$
q := \displaystyle\frac{\alpha}{2}
$$ 
for $\alpha \in ]1,2]$. Hence, $q\in ]0,1]$.

\begin{lemma}
The system \eqref{k} is equivalent to the following integral equation:
\begin{equation}
\label{b}
\Theta(t)=\Theta_{0} +\left[ \frac{\psi(t)-\psi(0)}{\psi^\prime(0)}\right] 
\Theta_{1} + I_{0^+}^{\alpha,\psi}(A\Theta(t)+f(t,\Theta(t))),\ t\in J.
\end{equation}
\end{lemma}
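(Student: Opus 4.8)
The plan is to prove the stated equivalence in both directions, the key ingredient being the composition rule of Proposition~\ref{lalla} together with the initial data prescribed in \eqref{k}. Since $1<\alpha\le 2$, we take $n=2$, so that the sum appearing in Proposition~\ref{lalla} runs only over $k=0$ and $k=1$, matching exactly the two initial conditions $\Theta(0)=\Theta_0$ and $\Theta'(0)=\Theta_1$ available to us.

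First I would treat the direct implication. Assuming $\Theta$ solves \eqref{k}, I apply the fractional integral $I_{0^+}^{\alpha,\psi}$ to both sides of the identity $\mathcal{^CD}^{\alpha ,\psi}_{0^{+}}\Theta(t)=A\Theta(t)+f(t,\Theta(t))$. By Proposition~\ref{lalla}, the left-hand side equals $\Theta(t)$ minus the boundary sum
$$\sum_{k=0}^{1}\frac{\Theta_\psi^{[k]}(0^+)}{k!}\bigl(\psi(t)-\psi(0)\bigr)^{k},$$
while the right-hand side is precisely $I_{0^+}^{\alpha,\psi}\bigl(A\Theta(t)+f(t,\Theta(t))\bigr)$. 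It then remains to identify the two boundary terms: by definition $\Theta_\psi^{[0]}(0^+)=\Theta(0)=\Theta_0$, and $\Theta_\psi^{[1]}(0^+)=\frac{1}{\psi^\prime(0)}\Theta^\prime(0)=\frac{\Theta_1}{\psi^\prime(0)}$. Substituting these values and rearranging yields exactly \eqref{b}.

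For the converse, I would start from \eqref{b} and apply $\mathcal{^CD}^{\alpha ,\psi}_{0^{+}}$ to both sides. The constant term $\Theta_0$ and the term proportional to $\psi(t)-\psi(0)$ are annihilated by the $\psi$-Caputo derivative, since applying $\left(\frac{1}{\psi^\prime(t)}\frac{d}{dt}\right)^2$ to them gives zero, so that $\mathcal{^CD}^{\alpha ,\psi}_{0^{+}}\bigl(\psi(t)-\psi(0)\bigr)^{k}=0$ for $k=0,1$. Meanwhile the composition $\mathcal{^CD}^{\alpha ,\psi}_{0^{+}}I_{0^+}^{\alpha,\psi}$ acts as the identity on the forcing term, recovering $A\Theta(t)+f(t,\Theta(t))$. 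To close the equivalence I would then recover the two initial conditions: evaluating \eqref{b} at $t=0$ gives $\Theta(0)=\Theta_0$ because the fractional integral vanishes there, and applying $\frac{1}{\psi^\prime(t)}\frac{d}{dt}$ to \eqref{b} and letting $t\to 0^+$ gives $\Theta_\psi^{[1]}(0^+)=\Theta_1/\psi^\prime(0)$, that is $\Theta^\prime(0)=\Theta_1$.

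The main obstacle I anticipate lies in the regularity bookkeeping rather than in the formal manipulations. Proposition~\ref{lalla} requires $\Theta\in C^{n}(J)$, so one must argue that a solution of \eqref{k} is smooth enough for the composition identity to apply, and, conversely, that a solution of \eqref{b} inherits the differentiability needed to return to \eqref{k}. A second delicate point is that $A$ is an unbounded, merely closed operator: to make sense of $I_{0^+}^{\alpha,\psi}(A\Theta(t))$ and, in the converse direction, to commute $A$ with the fractional integral, one needs $\Theta(t)\in\mathcal{D}(A)$ with $t\mapsto A\Theta(t)$ integrable, and then to invoke the closedness of $A$ to justify pulling $A$ through $I_{0^+}^{\alpha,\psi}$. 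Handling these domain and integrability conditions carefully is where the real work sits; the algebraic core of the argument is otherwise a direct application of Proposition~\ref{lalla}.
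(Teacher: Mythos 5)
Your proposal follows essentially the same route as the paper's proof: the forward direction applies $I_{0^+}^{\alpha,\psi}$ and invokes Proposition~\ref{lalla} with $n=2$, identifying $\Theta_\psi^{[0]}(0^+)=\Theta_0$ and $\Theta_\psi^{[1]}(0^+)=\Theta_1/\psi^\prime(0)$, while the converse applies $\mathcal{^CD}^{\alpha,\psi}_{0^+}$ to \eqref{b}, uses that the $\psi$-Caputo derivative annihilates the two initial-data terms for $\alpha>1$, and recovers the initial conditions by evaluating at $t=0$ and differentiating. Your closing remarks on regularity and on the domain of the unbounded operator $A$ flag genuine points that the paper itself passes over silently, but they do not change the argument.
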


\begin{proof}
$(\Rightarrow)$ \quad  Assume that  \eqref{k} is satisfied. If $t$ is in $J$, then 
\begin{equation}
\label{c}
I_{0^{+}}^{\alpha, \psi}(\mathcal{^CD}^{\alpha ,\psi}_{0^{+}} \Theta(t))
= I_{0^{+}}^{\alpha, \psi} \left( A\Theta(t)+f(t,\Theta(t))\right).
\end{equation}
On the other hand, by using Proposition~\ref{lalla}, we get
$$
\begin{aligned}
I_{0^{+}}^{\alpha, \psi}(\mathcal{^CD}^{\alpha ,\psi}_{0^{+}} \Theta(t))
&= \Theta(t)-  \Theta^{\left[ 0 \right]}_{\psi}(0) 
-\Theta^{\left[ 1 \right]}_{\psi}(0)\left( \psi(t)-\psi(0)\right) \\
&= \Theta{(t)}-\Theta_0 - \left[ \frac{\psi(t)-\psi(0)}{\psi^\prime(0)}\right] \Theta_{1}.
\end{aligned}
$$ 
Using the equation \eqref{c}, we obtain:
$$   
\Theta(t)=\Theta_{0} +\left[ \frac{\psi(t)-\psi(0)}{\psi^\prime(0)}\right] 
\Theta_{1} + I_{0^+}^{\alpha,\psi}\left( A\Theta(t)+f(t,\Theta(t))\right).
$$

$(\Leftarrow)$ \quad Suppose that equation \eqref{b} is satisfied.
Then, for $t=0$, we have:
$$
\Theta(0)=\Theta_0.
$$
Moreover, for all $t$ in $J$, we have:
\begin{equation*}
\begin{aligned}
&\dfrac{d}{dt} \Theta(t)
=\left[ \frac{\psi^\prime(t)}{\psi^\prime(0)}\right] \Theta_{1} 
+\dfrac{d}{dt}I_{0^{+}}^{\alpha,\psi} \left[  A\Theta(t)+f(t,\Theta(t)) \right] \\
& = \left[ \frac{\psi^\prime(t)}{\psi^\prime(0)}\right] \Theta_{1} 
+\dfrac{d}{dt}\left[  \dfrac{1}{\Gamma(\alpha)}\int_0^t \psi^\prime(\tau)(\psi(t)
-\psi(\tau))^{\alpha-1} \left[  A\Theta(\tau)+f(\tau,\Theta(\tau)) \right]d \tau \right] \\
&= \left[ \frac{\psi^\prime(t)}{\psi^\prime(0)}\right] \Theta_{1} 
+  \dfrac{1}{\Gamma(\alpha)}\int_0^t \dfrac{d}{dt}\left[  \psi^\prime(\tau)(\psi(t)
-\psi(\tau))^{\alpha-1} \left(  A\Theta(\tau)+f(\tau,\Theta(\tau)) \right) \right] d \tau \\ 
&\quad \quad + \dfrac{1}{\Gamma(\alpha)} \psi^\prime(t)(\psi(t)-\psi(t))^{\alpha-1} 
\left(  A\Theta(t)+f(t,\Theta(t)) \right)\dfrac{d}{dt}(t)  \\
&\quad \quad - \dfrac{1}{\Gamma(\alpha)} \psi^\prime(0)(\psi(t)-\psi(0))^{\alpha-1} 
\left(  A\Theta(0)+f(0,\Theta(0)) \right)\dfrac{d}{dt}(0)\\
&= \left[ \frac{\psi^\prime(t)}{\psi^\prime(0)}\right] \Theta_{1} 
+  \dfrac{\alpha-1}{\Gamma(\alpha)}\int_0^t   \psi^\prime(\tau) \psi^\prime(t) 
(\psi(t)-\psi(\tau))^{\alpha-2} \left(  A\Theta(\tau)+f(\tau,\Theta(\tau)) \right)  d \tau. 
\end{aligned}
\end{equation*}
Taking $t=0$, we get:
$$
\dfrac{d}{dt}\Theta(0)=\Theta_{1}.
$$
Moreover, 
\begin{equation*}
\begin{split}
\mathcal{^CD}^{\alpha ,\psi}_{0^{+}}\Theta(t)
&= \mathcal{^CD}^{\alpha ,\psi}_{0^{+}}\Theta_{0} 
+\mathcal{^CD}^{\alpha ,\psi}_{0^{+}}  
\left[ \frac{\psi(t)-\psi(0)}{\psi^\prime(0)}\right]
\Theta_{1} \\
&\qquad +\mathcal{^CD}^{\alpha ,\psi}_{0^{+}} I^{\alpha ,\psi}_{0^{+}} (A\Theta(t)+f(t,\Theta(t)))\\
&= \dfrac{\Theta_{1}}{\psi^\prime(0)}  
\left[ \mathcal{^CD}^{\alpha ,\psi}_{0^{+}}(\psi(t)-\psi(0))\right]  +A\Theta(t)+f(t,\Theta(t)).
\end{split}
\end{equation*}
According to \cite{Almeidaa}, 
$\mathcal{^CD}^{\alpha ,\psi}_{0^{+}}  \left[ \psi(t)-\psi(0)\right] =0$   
for all $ \alpha >1$. We obtain that
$$
\mathcal{^CD}^{\alpha ,\psi}_{0^{+}}\Theta(t)= A\Theta(t)+f(t,\Theta(t)),
$$
which completes the proof.\proofend
\end{proof}

\begin{theorem}
\label{bbb}
If the integral equation \eqref{b} holds, then:
$$
\Theta(t)= C^\psi_q(t,0) \Theta_{0} + R^\psi_q(t,0) \Theta_{1} 
+ \int_0^t (\psi(t)-\psi(s))^{q-1} P^\psi_q(t,s)f(s,\Theta(s))\psi^\prime(s)  ds,
$$
where
\begin{gather*}
C^\psi_q(t,0)x= \displaystyle\int_{0}^{+\infty}  
M_q(\theta)C((\psi(t)-\psi(0))^q\theta)xd\theta,\quad q=\displaystyle\frac{\alpha}{2},\  x\in X,\\
R^\psi_q(t,0)x= \displaystyle\int_{0}^{+\infty} C^\psi_q(s,0)\dfrac{\psi^\prime(s)}{\psi^\prime(0)}xds,\\
P^\psi_q(t,s)x=\displaystyle\int_{0}^{+\infty} q\theta M_q(\theta)S((\psi(t)-\psi(s))^q\theta)xd\theta.
\end{gather*}		
\end{theorem}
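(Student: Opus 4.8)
\emph{Strategy and transform.} The plan is to carry \eqref{b} into the Laplace domain through the generalized transform $\mathcal{L}_\psi$, to solve the resulting \emph{algebraic} relation for $\hat\Theta(\mathcal{G}):=\mathcal{L}_\psi\{\Theta\}(\mathcal{G})$, and then to recognise each of the three terms of the asserted formula as the $\mathcal{L}_\psi$-preimage of the corresponding piece of $\hat\Theta$, closing the argument by injectivity of $\mathcal{L}_\psi$. I would work with $\operatorname{Re}\mathcal{G}$ large enough that $\mathcal{G}^{\alpha}\in\rho(A)$ and the Neumann series for $R(\mathcal{G}^{\alpha};A)$ converges. Writing $\hat f(\mathcal{G}):=\mathcal{L}_\psi\{f(\cdot,\Theta(\cdot))\}(\mathcal{G})$ and applying $\mathcal{L}_\psi$ to \eqref{b}, I would use Proposition~\ref{lokl}(1) on the constant term, the identity $\psi(t)-\psi(0)=I_{0^+}^{1,\psi}(1)(t)$ with Proposition~\ref{lokl} on the term carrying $\Theta_1$, Proposition~\ref{lokl}(2) on the fractional integral, and the closedness of $A$ to pull it outside the integral. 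This gives $\hat\Theta=\mathcal{G}^{-1}\Theta_0+(\psi'(0)\mathcal{G}^{2})^{-1}\Theta_1+\mathcal{G}^{-\alpha}(A\hat\Theta+\hat f)$, whence
\begin{equation*}
\hat\Theta=\mathcal{G}^{\alpha-1}R(\mathcal{G}^{\alpha};A)\Theta_0+\frac{\mathcal{G}^{\alpha-2}}{\psi'(0)}R(\mathcal{G}^{\alpha};A)\Theta_1+R(\mathcal{G}^{\alpha};A)\hat f .
\end{equation*}

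\emph{Inverting the two homogeneous terms.} For the first term I would expand $C\bigl((\psi(t)-\psi(0))^{q}\theta\bigr)$ via Proposition~\ref{Bouaaaa}, integrate term by term against $M_q(\theta)\,d\theta$ using Lemma~\ref{jjjjj}(2), and exploit $q=\alpha/2$ and $\Gamma(1+2n)=(2n)!$ to obtain the closed form $C^\psi_q(t,0)=\sum_{n\ge0}A^{n}(\psi(t)-\psi(0))^{\alpha n}/\Gamma(1+\alpha n)$. Applying $\mathcal{L}_\psi$ (after $u=\psi(t)-\psi(0)$) with $\int_0^{\infty}e^{-\mathcal{G}u}u^{\alpha n}\,du=\Gamma(1+\alpha n)\mathcal{G}^{-(\alpha n+1)}$ collapses the series to $\mathcal{G}^{-1}(I-A\mathcal{G}^{-\alpha})^{-1}=\mathcal{G}^{\alpha-1}R(\mathcal{G}^{\alpha};A)$, which is the first term. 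Since $R^\psi_q(t,0)$ is the order-one $\psi$-integral $\psi'(0)^{-1}I_{0^+}^{1,\psi}[C^\psi_q(\cdot,0)](t)$, Proposition~\ref{lokl}(2) with exponent $1$ multiplies its transform by $(\psi'(0)\mathcal{G})^{-1}$ and reproduces the second term.

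\emph{Inverting the forcing term.} I would recognise the last integral as a generalized convolution: after the substitution $\psi(r)=\psi(t)+\psi(0)-\psi(s)$ in Definition~\ref{jijij}, the forcing term equals $(\delta\ast_\psi f(\cdot,\Theta(\cdot)))(t)$ with kernel $\delta(t)=(\psi(t)-\psi(0))^{q-1}P^\psi_q(t,0)$, the point being that $P^\psi_q(\psi^{-1}(\psi(t)+\psi(0)-\psi(s)),0)$ collapses to $P^\psi_q(t,s)$. Proposition~\ref{jijijj} then yields $\mathcal{L}_\psi\{\text{forcing}\}=\mathcal{L}_\psi\{\delta\}\,\hat f$, and a term-by-term computation for $\delta$ — expanding the sine family as $S(s)=\sum_{n\ge0}A^{n}s^{2n+1}/(2n+1)!$, applying Lemma~\ref{jjjjj}(2), and using $\alpha(n+1)/\Gamma(1+\alpha(n+1))=1/\Gamma(\alpha(n+1))$ — gives $\mathcal{L}_\psi\{\delta\}=R(\mathcal{G}^{\alpha};A)$, matching the third term. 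Comparing with the displayed expression for $\hat\Theta$ and invoking injectivity of $\mathcal{L}_\psi$ then delivers the claimed formula.

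\emph{Main obstacle.} The routine part is the bookkeeping of Gamma factors; the genuine work is analytic. I anticipate three difficulties. First, justifying the term-by-term integration of the cosine and sine power series against both $M_q(\theta)\,d\theta$ and the Laplace kernel $e^{-\mathcal{G}(\psi(t)-\psi(0))}\psi'(t)\,dt$, which I would control through the uniform bound \eqref{TT1}, the moment estimate of Lemma~\ref{jjjjj}(2), and absolute convergence for $\operatorname{Re}\mathcal{G}$ large. Second, the change of variables identifying the forcing integral with $\delta\ast_\psi f$, where the $\psi(0)$ appearing inside $\psi^{-1}(\psi(t)+\psi(0)-\psi(s))$ must be tracked carefully so that the kernel realigns with $P^\psi_q(t,s)$. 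Third, the passage back from transforms to functions by injectivity of $\mathcal{L}_\psi$, which presupposes that $\Theta$ and $f(\cdot,\Theta(\cdot))$ are of $\psi$-exponential order so that all transforms exist on a common half-plane; the boundary case $q=1$ (that is $\alpha=2$) should be treated separately, reducing to the classical cosine/sine family representation.
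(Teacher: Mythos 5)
Your proposal is correct in outline and shares the paper's global architecture --- apply $\mathcal{L}_\psi$ to \eqref{b}, solve algebraically for $\vartheta_\psi(\lambda)$ as in \eqref{24}, identify each resolvent expression as the $\mathcal{L}_\psi$-transform of one of the three operators, and conclude by injectivity of the transform --- but the key identification step is carried out by a genuinely different technique. The paper never expands $C(t)$ or $S(t)$ in powers of $A$: it starts from Lemma~\ref{23}, which represents $\lambda R(\lambda^2;A)$ and $R(\lambda^2;A)$ as Laplace integrals of the cosine and sine families, and then performs a chain of substitutions ($t=\tau^q$, then $\tau=\psi(t)-\psi(0)$, then $\psi(s)=\psi(t)\theta+\psi(0)(1-\theta)$, then $l=\theta^{-q}$) together with the subordination identity $\int_0^{+\infty}e^{-\lambda\theta}\phi_q(\theta)\,d\theta=e^{-\lambda^q}$ for $\phi_q(\theta)=q\,\theta^{-q-1}M_q(\theta^{-q})$ to make $M_q$ appear. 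You instead expand the cosine and sine families via Proposition~\ref{Bouaaaa}, integrate termwise against $M_q(\theta)\,d\theta$ using the moments of Lemma~\ref{jjjjj}, and resum a Neumann series. Your route is computationally cleaner --- it exposes $C^\psi_q(t,0)=\sum_{n\ge0}A^n(\psi(t)-\psi(0))^{\alpha n}/\Gamma(1+\alpha n)$, essentially the Mittag--Leffler form the paper only recovers later in Section~\ref{sec:05} --- and your treatment of $R^\psi_q$ as $\psi'(0)^{-1}I^{1,\psi}_{0^+}C^\psi_q(\cdot,0)$ and of the forcing term as a $\psi$-convolution with kernel $(\psi(t)-\psi(0))^{q-1}P^\psi_q(t,0)$ coincides with what the paper does via Proposition~\ref{jijijj}. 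The price of your route is that the series $\sum_n A^nt^{2n}/(2n)!$ and $\sum_n A^n\mathcal{G}^{-\alpha n}$ converge in $L(X)$ only for bounded $A$; for an unbounded generator you would have to work on a core of smooth vectors and extend by density, or fall back on the resolvent representation of Lemma~\ref{23} --- which is precisely what the paper's subordination argument is built on, making it the more robust of the two in the stated generality. Your remark that the boundary case $q=1$ (i.e.\ $\alpha=2$) requires separate treatment is well taken: Lemma~\ref{jjjjj} is stated only for $0<q<1$, and the paper passes over this point silently.
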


\begin{proof} 
Let $\lambda$ be such that $Re(\lambda) > 0 $ and $x\in X$. 
The generalized Laplace transform of the functions $\Theta$ and $f$ are, respectively, given by
$$
\vartheta_\psi(\lambda)
= \mathcal{L}_\psi(\Theta(t))(\lambda) 
= \int_0^{+\infty} e^{-\lambda(\psi(s)-\psi(0))}\Theta(s)\psi^{\prime}(s)ds,
$$
and
$$
\Upsilon_\psi(\lambda)= \mathcal{L}_\psi (f(t,\Theta(t)))(\lambda)
= \int_0^{+\infty} e^{-\lambda(\psi(s)-\psi(0))}f(s,\Theta(s))\psi^{\prime}(s)ds.
$$
Let us now take $\lambda$ such that $\lambda^\alpha \in \rho(A)$.  
Applying the  generalized Laplace transform to (\ref{b}), we get
$$
\vartheta_\psi(\lambda)=\frac{\Theta_{0}}{\lambda}
+\frac{\Theta_{1}}{\lambda^{2}\psi^\prime(0)}
+\frac{A\vartheta_\psi(\lambda)}{\lambda^{\alpha}}
+\frac{\Upsilon_\psi(\lambda)}{\lambda^{\alpha}}.
$$
Indeed, we have
$$   
\Theta(t)=\Theta_{0} +\left[ \frac{\psi(t)-\psi(0)}{\psi^\prime(0)}\right] 
\Theta_{1} + I_{0^+}^{\alpha,\psi}\left( A\Theta(t)+f(t,\Theta(t))\right)
$$
and	by applying  the generalized Laplace transforms, one obtains
\begin{multline*}
\mathcal{L}_\psi(\Theta(t))(\lambda)
= \mathcal{L}_\psi (\Theta_{0})(\lambda) + \mathcal{L}_\psi 
\left( \left[ \frac{\psi(t)-\psi(0)}{\psi^\prime(0)}\right] 
\Theta_{1}\right) (\lambda) \\
+  \mathcal{L}_\psi 
(I_{0^+}^{\alpha,\psi}\left( A\Theta(t)+f(t,\Theta(t))\right))(\lambda).
\end{multline*}  
Using Proposition~\ref{lokl}, we obtain that
\begin{gather*}
\mathcal{L}_\psi (\Theta_{0})(\lambda)=\dfrac{\Theta_{0}}{\lambda};\\
\mathcal{L}_\psi (I_{0^+}^{\alpha,\psi}\left( A\Theta(t)
+f(t,\Theta(t))\right))(\lambda)=\dfrac{1}{\lambda^\alpha}
A\mathcal{L}_\psi (\Theta(t))(\lambda) +\dfrac{1}{\lambda^\alpha}
\mathcal{L}_\psi (f(t,\Theta(t)))(\lambda);
\end{gather*}
and  
$$ 
\begin{aligned} 
\mathcal{L}_\psi \left( \left[ \frac{\psi(t)
-\psi(0)}{\psi^\prime(0)}\right] \Theta_{1}\right)(\lambda) 
&=\int_{0}^{+\infty} e^{-\lambda (\psi(s)-\psi(0))}\left[ 
\dfrac{\psi(s)-\psi(0)}{\psi^\prime(0)}\right] \psi^\prime(s)\Theta_{1}ds\\
&=\dfrac{\Theta_{1}}{{\psi^\prime(0)}} \int_{0}^{+\infty} 
\dfrac{d}{d \lambda}\left[ - e^{-\lambda (\psi(s)-\psi(0))}\right]  \psi^\prime(s)ds\\	
&=-\dfrac{\Theta_{1}}{{\psi^\prime(0)}} \dfrac{d}{d \lambda}  \left(  \dfrac{1}{\lambda} \right)\\
&= \dfrac{\Theta_{1}}{{\psi^\prime(0)}}   \dfrac{1}{\lambda^2}.
\end{aligned}
$$
Hence, 
\begin{equation*}
\vartheta_\psi(\lambda)=\frac{\Theta_{0}}{\lambda}+\frac{\Theta_{1}}{\lambda^{2}\psi^\prime(0)}
+\frac{A\vartheta_\psi(\lambda)}{\lambda^{\alpha}}+\frac{\Upsilon_\psi(\lambda)}{\lambda^{\alpha}}.
\end{equation*}
It follows that
\begin{equation} 
\label{24}
\vartheta_\psi(\lambda) =\lambda^{\alpha-1}\times (\lambda^\alpha I-A)^{-1} 
\Theta_{0}+\lambda^{\alpha-2}(\lambda^\alpha I-A)^{-1}
\dfrac{\Theta_{1}}{\psi^\prime(0)}+(\lambda^\alpha I-A)^{-1}\Upsilon_\psi(\lambda).
\end{equation}
By Lemma~\ref{23}, we can see that, for any $ x\in X$, we have
$$ 
\begin{aligned}
&\bullet \quad \lambda^{\alpha-1} (\lambda^\alpha I-A)^{-1}x 
= \lambda^{q-1}\lambda^{q}({(\lambda^{q})}^2 I-A)^{-1}x 
= \displaystyle\int_{0}^{+\infty} \lambda^{q-1} e^{-\lambda^{q} t}C(t)xdt,\\
&\bullet \quad (\lambda^\alpha I-A)^{-1}x =((\lambda^q)^{2}I-A)^{-1}x
=\displaystyle\int_{0}^{+\infty}e^{-\lambda^qt} S(t)xdt,	
\end{aligned}
$$
where $q=\dfrac{\alpha}{2}$. Let us consider $t=\tau^q$. 
We can write the above expressions as follows:
$$
\begin{aligned}
& \bullet \quad \lambda^{\alpha-1} (\lambda^\alpha I-A)^{-1}x 
= \displaystyle\int_0^{+\infty} 
q(\lambda\tau)^{q-1}e^{-{(\lambda \tau)}^q} C(\tau^q) x d\tau;\\
& \bullet \quad (\lambda^\alpha I-A)^{-1}x
= \displaystyle\int_0^{+\infty} q(\tau)^{q-1}e^{-{(\lambda \tau)}^q} S(\tau^q) x d\tau.
\end{aligned}
$$
If we take  $\tau = \psi(t)-\psi(0)$, then $ d\tau = \psi^\prime(t)dt$ 
and we obtain that
$$  
\begin{aligned}
\lambda^{\alpha-1}& (\lambda^\alpha I-A)^{-1}x\\ 
&= \int_0^{+\infty} q\left[ \lambda ( \psi(t)-\psi(0))\right]^{q-1}
e^{-{\left[ \lambda ( \psi(t)-\psi(0))\right] }^q} 
C(\left[  \psi(t)-\psi(0)\right] ^q)\psi^\prime(t) x dt\\
&= \int_0^{+\infty} \dfrac{-1}{\lambda}\dfrac{d}{dt} \left( 
e^{-{\left[ \lambda ( \psi(t)-\psi(0))\right] }^q} \right) 
C(\left[  \psi(t)-\psi(0)\right] ^q) x dt. 
\end{aligned}
$$
In addition, we have 
\begin{multline}
\quad (\lambda^\alpha I-A)^{-1}x 
= \int_0^{+\infty} 
q\left[ \psi(t)-\psi(0)\right] ^{q-1}e^{-{\left[ 
\lambda ( \psi(t)-\psi(0))\right] }^q}\\ 
\times S (\left[  \psi(t)-\psi(0)\right] ^q)\psi^\prime(t) x dt.
\end{multline}
Using the function 
$$ 
\phi_q(\theta)=\frac{q}{\theta^{q+1}}M_q(\theta^{-q}), \quad \forall\theta \in ]0,+\infty[ 
$$
defined in \cite{zhou.new}, and its Laplace transform given by
$$ 
\int_0^{+\infty} e^{-\lambda \theta}\phi_q(\theta)d\theta =e^{-\lambda^q},
$$
we obtain that
$$  
\begin{aligned}
\lambda^{\alpha-1}&({(\lambda^{\alpha})} I-A)^{-1}x \\
&=\int_0^{+\infty} 
\dfrac{-1}{\lambda}\dfrac{d}{dt}  \left( \int_0^{+\infty}   
e^{-{\left[ \lambda ( \psi(t)-\psi(0))\right]\theta }} 
\phi_q(\theta) d\theta \right) C(\left[  \psi(t)-\psi(0)\right] ^q) x dt\\
&=\int_0^{+\infty}  \int_0^{+\infty}\theta   e^{-{\left[ 
\lambda ( \psi(t)-\psi(0))\right]\theta }} \phi_q(\theta)   
C(\left[  \psi(t)-\psi(0)\right] ^q) x \psi^\prime(t)d\theta dt,	
\end{aligned}
$$
and 
$$  
\begin{aligned}
\quad (\lambda^\alpha I-A)^{-1}x 
&= \int_0^{+\infty} q\left[ \psi(t)-\psi(0)\right] ^{q-1}\\  
&\quad \times \int_0^{+\infty} e^{-{\left[ \lambda ( \psi(t)-\psi(0))\right] }\theta} 
\phi_q(\theta) d\theta S (\left[  \psi(t)-\psi(0)\right] ^q)  \psi^\prime(t) x dt\\
&= \int_0^{+\infty} \int_0^{+\infty}  q\left[ \psi(t)-\psi(0)\right] ^{q-1}  
e^{-{\left[ \lambda ( \psi(t)-\psi(0))\right] }\theta} \phi_q(\theta) \\
&\quad \times S (\left[  \psi(t)-\psi(0)\right] ^q)  \psi^\prime(t) x  d\theta dt.
\end{aligned}
$$
Set $\psi(s)=\psi(t)\theta +\psi(0)(1-\theta)$. 
Thus, $\psi^\prime(s)ds=\theta \psi^\prime(t)dt$. 
Then, we obtain: 
$$  
\begin{aligned}
\lambda^{\alpha-1}({\lambda^{\alpha}} I-A)^{-1}x 
&=\int_0^{+\infty}  \int_0^{+\infty}   e^{-{\left[ \lambda ( \psi(s)-\psi(0))\right] }} \phi_q(\theta) \\
&\quad \times C\left( \left[ \dfrac{ \psi(s)-\psi(0)}{\theta}\right] ^q\right)  x \psi^\prime(s)d\theta ds\\
&=\mathcal{L}_\psi \left(  \int_0^{+\infty} \phi_q(\theta)   
C\left( \left[ \dfrac{ \psi(t)-\psi(0)}{\theta}\right] ^q\right)  x d\theta \right)(\lambda),
\end{aligned}
$$
and 
$$  
\begin{aligned}
&(\lambda^\alpha I-A)^{-1}x 
=\int_0^{+\infty} \int_0^{+\infty}  q\dfrac{\left[ \psi(s)
-\psi(0)\right] ^{q-1}}{\theta^q}  e^{-{\left[ \lambda ( \psi(s)-\psi(0))\right] }} 
\phi_q(\theta) \\
&\qquad\qquad\qquad\qquad\qquad \times S \left( \left[  
\dfrac{\psi(s)-\psi(0)}{\theta}\right] ^q\right) \psi^\prime(s) x  d\theta ds\\
&\qquad =\mathcal{L}_\psi \left( \int_0^{+\infty}  
q\dfrac{\left[ \psi(t)-\psi(0)\right] ^{q-1}}{\theta^q}  \phi_q(\theta) 
S \left( \left[  \dfrac{\psi(t)-\psi(0)}{\theta}\right]^q\right)  
x  d\theta \right)(\lambda).
\end{aligned}
$$
By the substitution $l=\theta^{-q}$, we obtain:
$$  
\begin{aligned}
\lambda^{\alpha-1}({\lambda^{\alpha}} I-A)^{-1}x 
&=\mathcal{L}_\psi \left( \int_0^{+\infty} \dfrac{l^{-1-\frac{1}{q}}}{q}
\phi_q(l^{-\frac{1}{q}})   C\left( \left[ 
( \psi(t)-\psi(0))\right] ^q l\right)  x dl  \right)(\lambda)\\
&=\mathcal{L}_\psi \left( \left[ \int_0^{+\infty} M_q(l)   
C\left( \left[ ( \psi(t)-\psi(0))\right] ^q l\right)  x dl \right] \right) (\lambda)
\end{aligned}
$$
and 
$$  
\begin{aligned}
(&\lambda^\alpha I-A)^{-1}x\\ 
&= \mathcal{L}_\psi \left(  \int_0^{+\infty}  
ql \left[ \psi(t)-\psi(0)\right] ^{q-1}  \dfrac{l^{-1-\frac{1}{q}}}{q}
\phi_q(l^{-\frac{1}{q}})  S \left( \left[\psi(t)-\psi(0)\right] ^q l \right) x  dl  \right)(\lambda)\\
&= \mathcal{L}_\psi \left( \left[  \int_0^{+\infty}  ql \left[ \psi(t)-\psi(0)\right]^{q-1}  
M_q(l) S \left( \left[\psi(t)-\psi(0)\right] ^q l \right)   x  dl \right]\right)  (\lambda).
\end{aligned}
$$
This means that:
\begin{enumerate}
\item $\lambda^{\alpha-1}(\lambda^\alpha I-A)^{-1}x=\mathcal{L}_\psi 
\left( \left[ \displaystyle\int_0^{+\infty} M_q(l)   
C\left( \left[ ( \psi(t)-\psi(0))\right] ^q l\right)  x dl \right] \right) (\lambda)$;
\item $(\lambda^\alpha I-A)^{-1}x$ 
$$
=\mathcal{L}_\psi \left( \left[  
\displaystyle\int_0^{+\infty}  ql \left[ \psi(t)-\psi(0)\right] ^{q-1}  
M_q(l) S \left( \left[\psi(t)-\psi(0)\right] ^q l \right)   x  dl \right]\right)  ds.
$$
\end{enumerate}
Similarly, we obtain that
$$ 
\lambda^{\alpha-2}(\lambda^\alpha I-A)^{-1}x 
=\lambda^{-1}\times \mathcal{L}_\psi \left( \left[ \int_0^{+\infty} M_q(l)   
C\left( \left[ ( \psi(t)-\psi(0))\right] ^q l\right)  x dl \right] \right) (\lambda)
$$ 
and
\begin{multline*}
(\lambda^\alpha I-A)^{-1} \times \Upsilon_\psi(\lambda)\\
=\mathcal{L}_\psi \left( \left[  \displaystyle\int_0^{+\infty}  
ql \left[ \psi(t)-\psi(0)\right] ^{q-1}  M_q(l) S \left( 
\left[\psi(t)-\psi(0)\right] ^q l \right)    dl \right]\right) 
\times \Upsilon_\psi(\lambda).
\end{multline*} 
Since $\mathcal{L}_\psi(1)(\lambda)=\lambda^{-1}$, 
by using Proposition~\ref{jijijj} and Definition~\ref{jijij} we obtain that
\begin{equation*}
\begin{split}
\lambda^{\alpha-2} &\times(\lambda^\alpha I-A)^{-1}x \\
&=\mathcal{L}_\psi(1)
\times \mathcal{L}_\psi \left( \left[ \int_0^{+\infty} M_q(l)   
C\left( \left[ ( \psi(s)-\psi(0))\right] ^q l\right)  x dl \right] \right) (\lambda)\\
&=\mathcal{L}_\psi \bigg( \bigg[ 
\int_0^{t} \int_0^{+\infty}  M_q(l)   
C\left( \left[ \left( \psi\left(\psi^{-1}\left(\psi(t)+\psi(0)-\psi(\tau)\right) \right) 
-\psi(0)\right)\right] ^q l\right) \\
&\qquad\qquad\qquad 
\times x \psi^\prime(\tau)dl d\tau \bigg] \bigg) (\lambda)
\end{split}
\end{equation*}
and 
$$ 
\begin{aligned}
(&\lambda^\alpha I-A)^{-1} \times \Upsilon_\psi(\lambda)\\
&=\mathcal{L}_\psi \left[  \displaystyle \int_0^{t} f(\psi^{-1}\left(\psi(t)+\psi(0)
-\psi(\tau)\right) ,\Theta(\psi^{-1}\left(\psi(t)+\psi(0)-\psi(\tau))\right) )    \right. \\ 
&\quad \quad \quad \quad \left.\times \left[ \psi(\tau)-\psi(0)\right] ^{q-1}  
\displaystyle \int_0^{+\infty}  ql   M_q(l) 
S \left( \left[\psi(\tau)-\psi(0)\right] ^q l \right)  dl \psi^\prime(\tau) d\tau \right].  
\end{aligned}
$$
Taking $s=\psi^{-1}\left(\psi(t)+\psi(0)-\psi(\tau) \right)$, 
it follows that $\psi^\prime(s)ds=-\psi^\prime(\tau)d\tau$.
Then, we obtain:
\begin{multline*}
\lambda^{\alpha-2} \times(\lambda^\alpha I-A)^{-1}x\\ 
=\mathcal{L}_\psi \left( \left[ \int_0^{t} \int_0^{+\infty}  M_q(l)   
C \left( \left[ ( \psi(s) -\psi(0)) \right] ^q l \right)  
x  \psi^\prime(s)dl ds \right] \right) (\lambda);
\end{multline*}
\begin{multline*}
(\lambda^\alpha I-A)^{-1} \times \Upsilon_\psi(\lambda)
=\mathcal{L}_\psi \left(  \displaystyle \int_0^{t}  \left[ \psi(t)-\psi(s)\right] ^{q-1}  
f(s ,\Theta(s))  \psi^\prime(s) \right. \\ 
\left.\times \left[ \int_0^{+\infty}   
ql  M_q(l) S \left( \left[\psi(t)-\psi(s)\right]^q 
l \right) dl \right]   ds \right)(\lambda).  
\end{multline*}
Equation \eqref{24} becomes 
$$
\begin{aligned}
\vartheta_\psi(\lambda)
&= \mathcal{L}_\psi \left( \left[ \displaystyle\int_0^{+\infty} M_q(l)   
C\left( \left[ ( \psi(t)-\psi(0))\right] ^q l\right)  \Theta_0 dl \right] \right) (\lambda)\\
&\quad+\mathcal{L}_\psi \left( \left[ \int_0^{t} \int_0^{+\infty}  M_q(l)   
C \left( \left[ ( \psi(s) -\psi(0)) \right] ^q l \right)  
\dfrac{\Theta_1}{\psi^\prime(0)}  \psi^\prime(s)dl ds \right] \right) (\lambda)\\
&\quad+\mathcal{L}_\psi \bigg(  \displaystyle \int_0^{t}  \left[ \psi(t)-\psi(s)
\right]^{q-1} \left[  \int_0^{+\infty}   ql   M_q(l) S \left( \left[\psi(t)
-\psi(s)\right] ^q l \right) dl \right]   \\
&\qquad\qquad\qquad \times 
f(s ,\Theta(s))  \psi^\prime(s) ds \bigg)(\lambda)\\
&=\mathcal{L}_\psi \bigg( C^\psi_q(t,0) \Theta_{0} + K^\psi_q(t,0) \Theta_{1}\\ 
&\qquad \qquad \qquad
+ \int_0^t (\psi(t)-\psi(s))^{q-1} P^\psi_q(t,s)
f(s,\Theta(s))\psi^\prime(s)  ds\bigg)(\lambda).
\end{aligned}
$$
Hence, by using the uniqueness theorem of the generalized Laplace transform, we get
\begin{multline*}
\vartheta_\psi(\lambda)
=\mathcal{L}_\psi \bigg( C^\psi_q(t,0) \Theta_{0} 
+ R^\psi_q(t,0) \Theta_{1}\\ 
+ \int_0^t (\psi(t)-\psi(s))^{q-1} P^\psi_q(t,s)
f(s,\Theta(s))\psi^\prime(s)  ds\bigg)(\lambda),
\end{multline*}
where
\begin{equation*}
\begin{split}
C^\psi_q(t,0)
&= \displaystyle\int_{0}^{+\infty}  M_q(l)C((\psi(t)-\psi(0))^ql)dl;\\
R^\psi_q(t,0)
&= \displaystyle\int_{0}^{t} C^\psi_q(s,0)
\dfrac{\psi^\prime(s)}{\psi^\prime(0)}ds;\\
P^\psi_q(t,s)
&=\displaystyle\int_{0}^{+\infty} ql M_q(l)S((\psi(t)-\psi(s))^ql)dl.
\end{split}
\end{equation*}
So, by applying the inverse  generalized Laplace transform, we obtain that
\begin{multline*}
\Theta(t)=  C^\psi_q(t,0) \Theta_{0} + R^\psi_q(t,0) \Theta_{1}\\ 
+ \int_0^t (\psi(t)-\psi(s))^{q-1} P^\psi_q(t,s)f(s,\Theta(s))\psi^\prime(s)  ds.  
\end{multline*}
This completes the proof.\proofend
\end{proof}

Now we give the following definition of mild solution.

\begin{definition}
A function $\Theta \in C(J,X)$ is called a mild solution 
of system  \eqref{k} if it satisfies
$$
\Theta(t) =C^\psi_q(t,0) \Theta_{0} + R^\psi_q(t,0) \Theta_{1} 
+ \int_0^t (\psi(t)-\psi(s))^{q-1} P^\psi_q(t,s)f(s,\Theta(s))\psi^\prime(s)  ds.
$$
\end{definition}

The operators $C^\psi_q(t,0)$,  $R^\psi_q(t,0)$ 
and $P^\psi_q(t,s)$ satisfy the following properties.

\begin{proposition}
\label{aa}
For any  $ t \geq s > 0  $ and $ x \in X$, we have:
\begin{enumerate}
\item $\Vert C^\psi_q(t,0) x \Vert_{X} 
\leq  \mathcal{K} \Vert x \Vert_{X};$ \label{jjjjjj}
\item $ \Vert R^\psi_q(t,0) x \Vert_{X} 
\leq  \mathcal{K} \Vert x \Vert_{X} \dfrac{(\psi(t)-\psi(0)) }{\psi^\prime(0)};$
\item $ \Vert P^\psi_q(t,s)x \Vert_{X} 
\leq \dfrac{\mathcal{K}(\psi(t)-\psi(s))^{q}  }{\Gamma(2q)}  \Vert x \Vert_{X}$.
\end{enumerate}
\end{proposition}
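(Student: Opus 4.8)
The plan is to prove each of the three estimates by pushing the norm inside the defining integral of the corresponding operator and then combining the uniform cosine bound \eqref{TT1} with the moment identity for Mainardi's Wright-type function recorded in Lemma~\ref{jjjjj}(2). For the first bound I would start from $C^\psi_q(t,0)x = \int_{0}^{+\infty} M_q(\theta)\,C((\psi(t)-\psi(0))^q\theta)x\,d\theta$, use the positivity $M_q(\theta)>0$ from Lemma~\ref{jjjjj}(1) together with the triangle inequality for integrals, and estimate $\|C(\cdot)x\|_X \leq \mathcal{K}\|x\|_X$ via \eqref{TT1}. This reduces matters to $\mathcal{K}\|x\|_X\int_0^{+\infty}M_q(\theta)\,d\theta$, and Lemma~\ref{jjjjj}(2) with $n=0$ gives $\int_0^{+\infty}M_q(\theta)\,d\theta = \Gamma(1)/\Gamma(1)=1$, yielding the claim.

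For the second bound I would feed the first estimate pointwise into the definition $R^\psi_q(t,0)x=\int_0^t C^\psi_q(s,0)\frac{\psi'(s)}{\psi'(0)}x\,ds$. Since $\psi$ is strictly increasing with continuous derivative, $\psi'(s)>0$ and $\psi'(0)>0$, so the scalar factor comes out of the norm and one is left with $\frac{\mathcal{K}\|x\|_X}{\psi'(0)}\int_0^t\psi'(s)\,ds=\frac{\mathcal{K}\|x\|_X}{\psi'(0)}\bigl(\psi(t)-\psi(0)\bigr)$, which is exactly the stated bound.

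The third estimate is the only one requiring an extra ingredient, since the cosine operator is replaced by the sine operator $S$. I would first extract a linear growth bound for $S$ by setting $t_2=0$ in \eqref{TT2} and using $S(0)=0$, which gives $\|S(\tau)\|_{L(X)}\leq \mathcal{K}\tau$ for $\tau\geq0$. Applying this in $P^\psi_q(t,s)x=\int_0^{+\infty}q\theta\,M_q(\theta)\,S((\psi(t)-\psi(s))^q\theta)x\,d\theta$, and using $t\geq s$ so that the sine argument is nonnegative, leads to $\|P^\psi_q(t,s)x\|_X\leq q\mathcal{K}(\psi(t)-\psi(s))^q\|x\|_X\int_0^{+\infty}\theta^2 M_q(\theta)\,d\theta$. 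Lemma~\ref{jjjjj}(2) with $n=2$ yields $\int_0^{+\infty}\theta^2 M_q(\theta)\,d\theta=\Gamma(3)/\Gamma(1+2q)=2/\Gamma(1+2q)$, and the recurrence $\Gamma(1+2q)=2q\,\Gamma(2q)$ then collapses the constant to $\mathcal{K}(\psi(t)-\psi(s))^q/\Gamma(2q)$, as required.

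These are fundamentally routine interchanges of norm and integral, so I expect no conceptual difficulty in the first two parts. The main obstacle lies in the third bound: one must correctly derive the linear growth estimate for $S$ from \eqref{TT2} and then align the $n=2$ moment of $M_q$ with the Gamma recurrence so as to recover the sharp prefactor $1/\Gamma(2q)$ rather than a coarser constant. A secondary point to keep in mind is that Lemma~\ref{jjjjj}(2) is stated for $0<q<1$, so the argument applies on the range $\alpha\in\,]1,2[$ where $q\in\,]0,1[$.
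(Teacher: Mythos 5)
Your proposal is correct and follows essentially the same route as the paper: norm inside the integral, the uniform bound \eqref{TT1}, and the moments of $M_q$ from Lemma~\ref{jjjjj} with $n=0$ and $n=2$, collapsing the constant via $\Gamma(1+2q)=2q\,\Gamma(2q)$. The only cosmetic difference is in part 3, where the paper obtains the linear growth of $S$ by writing $S(\tau)x=\int_0^{\tau}C(l)x\,dl$ and applying \eqref{TT1}, while you derive the equivalent bound from \eqref{TT2} with $t_2=0$ and $S(0)=0$.
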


\begin{proof}
Let $x$ be an element of $X $ and $t \geq s > 0$.

$1$. For the first inequality, we have: 
$$
\begin{aligned}
\Vert C^\psi_q(t,0) x \Vert_{X} 
&= \left\| \displaystyle\int_{0}^{+\infty}  
M_q(\theta)C((\psi(t)-\psi(0))^q\theta)xd\theta \right\|_{X}\\
&  \leq  \displaystyle\int_{0}^{+\infty}  
M_q(\theta)\left\| C((\psi(t)-\psi(0))^q\theta)
\right\|_{L(X)} \left\| x \right\|_{X}d\theta.
\end{aligned}
$$

Using \eqref{TT1} and Lemma~\ref{jjjjj}, we obtain:
$$
\begin{aligned}
\Vert C^\psi_q(t,0) x \Vert_{X} & \leq \mathcal{K} \Vert x \Vert_{X} .
\end{aligned}
$$

$2$. We can see that:
$$ 
\begin{aligned}
\left\| R^\psi_q(t,0) x \right\|_{X} 
&=  \left\|   \displaystyle\int_{0}^{t} 
C^\psi_q(s,0)x \dfrac{\psi^\prime(s)}{\psi^\prime(0)}ds \right\|_{X} \\
&\leq  \dfrac{\mathcal{K} \Vert x \Vert_{X} }{\psi^\prime(0)} 
\displaystyle\int_{0}^{t} \psi^\prime(s)ds  \\
&\leq \mathcal{K} \Vert x \Vert_{X} \dfrac{(\psi(t)-\psi(0)) }{\psi^\prime(0)}.
\end{aligned} 
$$

$3$. The third inequality can be obtained as follows:
$$ 
\begin{aligned}
\left\|P^\psi_q(t,s) x \right\|_X 
&= \left\| \int_{0}^{+\infty} q\theta M_q(\theta)S((\psi(t)
-\psi(s))^q\theta)x d\theta \right\|_X  \\ 
&\leq  \int_{0}^{+\infty} q\theta M_q(\theta) \left\|\int^{(\psi(t)
-\psi(s))^q\theta}_0 C(l) x dl  \right\|_Xd\theta. 
\end{aligned}
$$
By applying the condition \eqref{TT1}, we obtain:
$$ 
\begin{aligned}
\left\|P^\psi_q(t,s) x \right\|_X
&\leq \mathcal{K} q \left\| x\right\|_X  \vert (\psi(t)
-\psi(s))^q\vert  \int_{0}^{+\infty} \theta^2 M_q(\theta)d\theta \\
&\leq \mathcal{K} \dfrac{(\psi(t)-\psi(s))^q }{\Gamma(2q)}\left\| x\right\|_X.  
\end{aligned}
$$
Now, using Lemma~\ref{jjjjj}, 
$$ 
\begin{aligned}
\left\|P^\psi_q(t,s) x \right\|_X 
&\leq \mathcal{K} \dfrac{(\psi(t)-\psi(s))^q }{\Gamma(2q)}\left\| x\right\|_X.  
\end{aligned}
$$
This completes the proof.\proofend
\end{proof}

We give also the following proposition.

\begin{proposition}
\label{A}
The operators $C^\psi_q(t,0),  P^\psi_q(t,s)$ and $R^\psi_q(t,0)$
are strongly continuous, i.e., for every $u\in X$ and 
$0 \leq   s \leq  t_1 \leq t_2 \leq  T,$  we have: 
\begin{enumerate}
\item $ \Vert C^\psi_q(t_2,0)u-C^\psi_q(t_1,0)u \Vert_{{X}}
\longrightarrow 0 \  , \quad \mbox{whenever} \quad  t_1- t_2 \longrightarrow 0$;
\item $\Vert R^\psi_q(t_2,0)u- R^\psi_q(t_1,0) u \Vert_{{X}}\longrightarrow 0 
\  , \quad \mbox{whenever} \quad  t_1- t_2 \longrightarrow 0$;
\item $\Vert P^\psi_q(t_2,s)u-P^\psi_q(t_1,s)u \Vert_{{X}}
\longrightarrow 0 \  , \quad \mbox{whenever} \quad  t_1- t_2 \longrightarrow 0$.
\end{enumerate}
\end{proposition}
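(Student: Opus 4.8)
The plan is to treat the three operators separately, in each case reducing the continuity statement to a convergence-under-the-integral-sign argument, relying on the strong continuity of the cosine family (item~3 of its definition), the Lipschitz estimate \eqref{TT2} for the sine family, the uniform bound \eqref{TT1}, and the moment identity in Lemma~\ref{jjjjj}.

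For $C^\psi_q(t,0)$ I would write the difference as
$$
C^\psi_q(t_2,0)u-C^\psi_q(t_1,0)u
=\int_0^{+\infty} M_q(\theta)\Big[C\big((\psi(t_2)-\psi(0))^q\theta\big)
-C\big((\psi(t_1)-\psi(0))^q\theta\big)\Big]u\,d\theta.
$$
Since $\psi$ is continuous and $x\mapsto x^q$ is continuous, the argument $(\psi(t)-\psi(0))^q$ varies continuously with $t$, so for each fixed $\theta$ the bracket tends to $0$ as $t_1\to t_2$ by strong continuity of the cosine family. The integrand is dominated by $2\mathcal{K}\|u\|_X\,M_q(\theta)$, which is integrable because $\int_0^{+\infty}M_q(\theta)\,d\theta=1$ by Lemma~\ref{jjjjj} with $n=0$. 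The dominated convergence theorem then yields $\|C^\psi_q(t_2,0)u-C^\psi_q(t_1,0)u\|_X\to 0$.

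For $P^\psi_q(t,s)$ the argument is simpler thanks to the Lipschitz bound \eqref{TT2}: writing the difference in the same way and applying \eqref{TT2} inside the integral gives
$$
\big\|P^\psi_q(t_2,s)u-P^\psi_q(t_1,s)u\big\|_X
\le \mathcal{K}\,\big|(\psi(t_2)-\psi(s))^q-(\psi(t_1)-\psi(s))^q\big|\,\|u\|_X
\int_0^{+\infty} q\theta^2 M_q(\theta)\,d\theta,
$$
where the integral is finite by Lemma~\ref{jjjjj} (it equals $1/\Gamma(2q)$) and the prefactor tends to $0$ by continuity of $\psi$. For $R^\psi_q(t,0)$ I would use its defining integral representation together with the uniform bound of Proposition~\ref{aa}(\ref{jjjjjj}): for $t_1\le t_2$,
$$
R^\psi_q(t_2,0)u-R^\psi_q(t_1,0)u=\int_{t_1}^{t_2}C^\psi_q(s,0)\frac{\psi^\prime(s)}{\psi^\prime(0)}u\,ds,
$$
whence $\|R^\psi_q(t_2,0)u-R^\psi_q(t_1,0)u\|_X \le \frac{\mathcal{K}\|u\|_X}{\psi^\prime(0)}\big(\psi(t_2)-\psi(t_1)\big)$, which tends to $0$ as $t_1\to t_2$ since $\psi$ is continuous and $\psi^\prime(0)>0$.

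The main obstacle is the first operator. Unlike the sine family, the cosine family carries here no Lipschitz estimate (nor any explicit modulus of continuity), only strong continuity; hence the interchange of limit and integral cannot be justified by a uniform bound on the difference and genuinely requires the dominated convergence theorem, the integrable majorant being furnished by the normalization $\int_0^{+\infty}M_q(\theta)\,d\theta=1$. The cases of $P^\psi_q$ and $R^\psi_q$, by contrast, follow from direct estimates and require no such delicate passage to the limit.
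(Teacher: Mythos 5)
Your proof is correct and follows essentially the same route as the paper: the estimates for $P^\psi_q$ (via the Lipschitz bound \eqref{TT2} and the second moment of $M_q$) and for $R^\psi_q$ (via the uniform bound on $C^\psi_q$) are the same, and for $C^\psi_q$ your direct use of dominated convergence with majorant $2\mathcal{K}\Vert u\Vert_X M_q(\theta)$ is just a cleaner packaging of the paper's truncation of the $\theta$-integral at a large $\delta$. A minor point in your favour: you retain the weight $\psi^\prime(s)/\psi^\prime(0)$ in the $R^\psi_q$ estimate, which the paper's displayed computation silently drops.
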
 

\begin{proof}\ 
\begin{enumerate}
\item  For every $u \in X,$  ${(C(t))}_{t\in \mathbb{R}}$ 
is strongly continuous, i.e., for any $\varepsilon > 0$
and $  t_1,t_2 \in \mathbb{R},$  it follows that 
$$
\left\| C(t_2)u-C(t_1)u \right\|_{{X}} \longrightarrow0,
$$ 
whenever $t_1- t_2 \longrightarrow 0$.  
Therefore, for any $0 \leq    t_1 \leq t_2 \leq  T,$  we have:
\begin{equation*}
\begin{aligned}
&\left\| C^\psi_q(t_2,0)u-C^\psi_q(t_1,0)u \right\|_{{X}}\\ 
&\quad = \left\| \int_{0}^{\infty}  M_q(\theta)\left[ C\left( (\psi(t_1)
-\psi(0))^q\theta\right)  u-C\left( (\psi(t_2)-\psi(0))^q \theta\right)u \right]d\theta \right\|_X\\
&\quad\leq   \int_{0}^{\delta}  M_q(\theta)\left\| C\left( (\psi(t_1)
-\psi(0))^q \theta\right) u-C\left( (\psi(t_2)-\psi(0))^q \theta\right)  u\right\|_X  d\theta\\
&\qquad +  \int_{\delta}^{\infty}  M_q(\theta)\left\|  
C\left( (\psi(t_1)-\psi(0))^q \theta\right) u
-C\left((\psi(t_2)-\psi(0))^q \theta \right) u\right\|_X  d\theta.\\
\end{aligned}
\end{equation*}
Let us choose $\delta$ sufficiently large such that
$$  
\int_{\delta}^{\infty}  M_q(\theta)\left\|  C\left( (\psi(t_1)-\psi(0))^q 
\theta \right) u-C\left((\psi(t_2)-\psi(0))^q 
\theta \right) u\right\|_X  d\theta \leq \varepsilon. 
$$
This implies that
$$  
\int_{\delta}^{\infty}  M_q(\theta)\left\|  C\left( (\psi(t_1)-\psi(0))^q 
\theta\right) u-C\left( (\psi(t_2)-\psi(0))^q \theta \right) u\right\|_X  
d\theta  \longrightarrow 0
$$
as $\varepsilon \rightarrow 0$.
Furthermore, utilizing the condition 
\eqref{TT1} and Lemma~\ref{jjjjj}, we obtain that
\begin{multline*}
\int_0^{\delta}  M_q(\theta)\left\|  C\left( (\psi(t_1)-\psi(0))^q \theta\right) u
-C\left( (\psi(t_2)-\psi(0))^q \theta \right) u\right\|_X  d\theta \\
\leq 2 \mathcal{K}
\left\|u \right\|_X\int_0^{\delta}  M_q(\theta) d\theta 
\leq 2 \mathcal{K} \left\|u \right\|_X. 
\end{multline*}
Thus,
$$  
\int_0^{\delta}  M_q(\theta)\left\|  C\left( (\psi(t_1)-\psi(0))^q \theta\right) u
-C\left( (\psi(t_2)-\psi(0))^q \theta \right) u\right\|_X  d\theta   
\longrightarrow 0
$$
as $t_1- t_2 \rightarrow 0$. Consequently, 
$$
\Vert C^\psi_q(t_2,0)u-C^\psi_q(t_1,0)u \Vert_{{X}}\longrightarrow 0
$$ 
whenever $t_1- t_2 \longrightarrow 0$.

\item We can see that:
$$ 
\begin{aligned}
\left\| R^\psi_q(t_2,0)u-R^\psi_q(t_1,0)u \right\|_{{X}} 
&= \left\| \int_{0}^{t_1} C^\psi_q(s,0)u ds -\int_{0}^{t_2} C^\psi_q(s,0)u ds\right\|_X \\
&\leq  \left\| \int_{t_1}^{t_2} C^\psi_q(s,0)u ds\right\|_X \\
&\leq  \mathcal{K}  \left\| u \right\|_X (t_2-t_1) \longrightarrow 0,
\end{aligned}
$$
whenever $t_1- t_2 \rightarrow 0$.
\item For every $u\in X$ and $0 
\leq   s \leq  t_1 \leq t_2 \leq  T$, we have:
\begin{equation*}
\begin{split}
&\left\| P^\psi_q(t_2,s)u-P^\psi_q(t_1,s)u \right\|_{{X}}\\ 
& = \left\| \int_{0}^{\infty}q \theta  M_q(\theta)
\left[ S\left( (\psi(t_2)-\psi(s))^q \theta\right) u-S\left( (\psi(t_2)-\psi(s))^q 
\theta\right)u \right] d\theta\right\|_X\\
& \leq \int_{0}^{\infty} q \theta 
M_q(\theta)\left\| S\left( (\psi(t_1)-\psi(s))^q 
\theta\right) -S\left( (\psi(t_2)-\psi(s))^q 
\theta\right)\right\|_{L(X)}  \left\| u \right\|_{X}  d\theta.
\end{split}
\end{equation*}
Using condition \eqref{TT2} and Lemma~\ref{jjjjj}, we derive that
\begin{multline*}
\left\| P^\psi_q(t_2,s)u-P^\psi_q(t_1,s)u \right\|_{{X}}\\ 
 \leq \dfrac{\mathcal{K}}{\Gamma(2q)}\left\| u \right\|_{X} 
\vert(\psi(t_1)-\psi(s))^q - (\psi(t_2)-\psi(s))^q \vert.	
\end{multline*}
Consequently,  
$\Vert P^\psi_q(t_2,s)u-P^\psi_q(t_1,s)u 
\Vert_{{X}}\longrightarrow 0$ \mbox{whenever} 
$t_1- t_2 \longrightarrow 0$.
\end{enumerate}
This completes the proof.\proofend
\end{proof}

The operator $ P^\psi_q$ satisfies the following proposition.

\begin{proposition}
For every $u\in X$ and $0 \leq   s \leq  t_1 \leq t_2 \leq  T$ we have:
\begin{enumerate}
\item $\displaystyle\lim_{t_1\rightarrow s}(\psi(t_1)-\psi(s))^{q-1}P^\psi_q(t_1,s)u=0$;
\item $\Vert (\psi(t_2)-\psi(s))^{q-1}P^\psi_q(t_2,s)u-(\psi(t_1)-\psi(s))^{q-1}P^\psi_q(t_1,s)
u \Vert_{{X}}\longrightarrow 0$ \mbox{whenever} $t_1- t_2 \longrightarrow 0$.
\end{enumerate}
\end{proposition}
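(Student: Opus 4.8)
The plan is to read the two claims as, respectively, right-continuity at $t=s$ and continuity on the whole interval of the vector-valued map $g(t):=(\psi(t)-\psi(s))^{q-1}P^\psi_q(t,s)u$. The recurring point is that, although the scalar weight $(\psi(t)-\psi(s))^{q-1}$ is singular at $t=s$ when $q<1$, it is controlled by the vanishing of $P^\psi_q(t,s)$ near the diagonal, since $q=\alpha/2$ with $1<\alpha\le 2$ gives $2q-1=\alpha-1>0$.

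First I would prove (1) by a direct estimate. Item~3 of Proposition~\ref{aa} yields
$$\|(\psi(t_1)-\psi(s))^{q-1}P^\psi_q(t_1,s)u\|_X \leq (\psi(t_1)-\psi(s))^{q-1}\,\frac{\mathcal{K}(\psi(t_1)-\psi(s))^q}{\Gamma(2q)}\,\|u\|_X = \frac{\mathcal{K}\|u\|_X}{\Gamma(2q)}\,(\psi(t_1)-\psi(s))^{2q-1}.$$
Since $2q-1>0$ and $\psi$ is continuous, $\psi(t_1)-\psi(s)\to 0$ as $t_1\to s$, so the right-hand side tends to $0$, which is claim (1).

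For (2) I would add and subtract the cross term $(\psi(t_2)-\psi(s))^{q-1}P^\psi_q(t_1,s)u$ and estimate
$$(\psi(t_2)-\psi(s))^{q-1}P^\psi_q(t_2,s)u-(\psi(t_1)-\psi(s))^{q-1}P^\psi_q(t_1,s)u=\mathrm{I}+\mathrm{II},$$
with $\mathrm{I}=(\psi(t_2)-\psi(s))^{q-1}\big[P^\psi_q(t_2,s)u-P^\psi_q(t_1,s)u\big]$ and $\mathrm{II}=\big[(\psi(t_2)-\psi(s))^{q-1}-(\psi(t_1)-\psi(s))^{q-1}\big]P^\psi_q(t_1,s)u$. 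For $\mathrm{I}$ I would invoke item~3 of Proposition~\ref{A}, namely the estimate $\|P^\psi_q(t_2,s)u-P^\psi_q(t_1,s)u\|_X\le \tfrac{\mathcal{K}}{\Gamma(2q)}\|u\|_X\,|(\psi(t_1)-\psi(s))^q-(\psi(t_2)-\psi(s))^q|$ established there (itself a consequence of \eqref{TT2}); for $\mathrm{II}$ I would use the bound on $P^\psi_q(t_1,s)u$ from Proposition~\ref{aa}(3) together with the continuity of the scalar $t\mapsto(\psi(t)-\psi(s))^{q-1}$ for $t>s$.

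The main obstacle is again the singularity of the weight as $t_i\to s$, which prevents bounding the $(\psi(t_i)-\psi(s))^{q-1}$ factors by a fixed constant. The clean resolution is to note that (1) shows $g$ extends continuously to $t=s$ with $g(s)=0$, while for $t>s$ the weight is continuous and $t\mapsto P^\psi_q(t,s)u$ is continuous by Proposition~\ref{A}(3); hence $g$ is continuous on the compact interval $[s,T]$, therefore uniformly continuous, which is exactly the assertion $\|g(t_2)-g(t_1)\|_X\to 0$ as $t_2-t_1\to 0$. In the explicit split this corresponds to two regimes: when $t_1$ stays bounded away from $s$, both $\mathrm{I}$ and $\mathrm{II}$ vanish by uniform continuity of the weight and of $P^\psi_q$; when $t_1\to s$ (hence $t_2\to s$, since $t_2-t_1\to 0$), the products collapse to powers of the form $(\psi(t_i)-\psi(s))^{2q-1}\to 0$ exactly as in (1). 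I expect checking that the two exponents combine to $2q-1>0$ in this near-diagonal regime to be the only delicate step.
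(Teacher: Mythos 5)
Your proof is correct and follows essentially the same route as the paper: part (1) is the identical estimate $\|(\psi(t_1)-\psi(s))^{q-1}P^\psi_q(t_1,s)u\|_X\le \mathcal{K}\|u\|_X(\psi(t_1)-\psi(s))^{2q-1}/\Gamma(2q)$ with $2q-1=\alpha-1>0$, and part (2) uses the same add-and-subtract decomposition into the terms $\mathrm{I}$ and $\mathrm{II}$ controlled by Propositions~\ref{aa} and~\ref{A}. Your extra discussion of the near-diagonal regime (where the singular weight is absorbed into $(\psi(t_i)-\psi(s))^{2q-1}$, or equivalently the uniform-continuity argument for the extended map $g$) is a welcome refinement of a step the paper passes over silently, but it is not a different method.
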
 

\begin{proof}
\begin{enumerate}
\item Let us consider $u\in X$ and $0 \leq   s \leq  t_1 \leq  T$. 
By using Proposition~\ref{aa}, we have:
$$ 
\begin{aligned}
\left\|(\psi(t_1)-\psi(s))^{q-1}P^\psi_q(t_1,s)u \right\|_{X} 
\leq  \dfrac{(\psi(t_1)-\psi(s))^{2q-1} \mathcal{K}}{\Gamma(2q)}  \Vert u \Vert_{X},
\end{aligned}
$$
which implies that 
$ \displaystyle\lim_{t_1\rightarrow s}(\psi(t_1)-\psi(s))^{q-1}P^\psi_q(t_1,s)u=0$.

\item Let us consider $u\in X$, $0 \leq   s \leq  t_1 \leq t_2 \leq  T$, 
and  
$$ 
\varpi(s ,t_1, t_2,u )  = (\psi(t_2)-\psi(s))^{q-1}
P^\psi_q(t_2,s)u-(\psi(t_1)-\psi(s))^{q-1}P^\psi_q(t_1,s)u.
$$
Then,
$$ 
\begin{aligned}
\|\varpi&(s ,t_1, t_2,u ) \|_{X}\\
&\leq \left\|\left( (\psi(t_2)-\psi(s))^{q-1}-(\psi(t_1)
-\psi(s))^{q-1}\right) P^\psi_q(t_1,s)u \right\|_{X}\\
&\quad \quad +\left\| (\psi(t_2)-\psi(s))^{q-1} 
\left( P^\psi_q(t_2,s)u- P^\psi_q(t_1,s)u\right) \right\|_{X}\\
&\leq \vert (\psi(t_2)-\psi(s))^{q-1}\vert \left\| 
\left( P^\psi_q(t_2,s)u- P^\psi_q(t_1,s)u\right) \right\|_{X}\\
& \quad\quad +\left\vert \left( (\psi(t_2)-\psi(s))^{q-1}
-(\psi(t_1)-\psi(s))^{q-1}\right)\right\vert \left\| P^\psi_q(t_1,s)u \right\|_{X}.
\end{aligned}
$$
By using Propositions~\ref{aa} and \ref{A}, we get:
$$ 
\displaystyle\lim_{t_1- t_2 \longrightarrow 0}\Vert \varpi(s ,t_1, t_2,u ) \Vert_{{X}}= 0.
$$
\end{enumerate}
The proof is complete.\proofend
\end{proof}

\begin{proposition}
\label{ojg}
Assume that $C(t)$ and $S(t)$ are compact for every 
$t > 0$. Then the operators $C^\psi_q(t,0)$ and $R^\psi_q(t,0)$ are
compact for every $t >0$. 
\end{proposition}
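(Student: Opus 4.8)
The plan is to exhibit $C^\psi_q(t,0)$ (for fixed $t>0$) as a limit, in the uniform operator topology, of compact operators, and then to invoke the fact that the compact operators form a closed subspace $K(X)$ of $L(X)$. The only obstruction to compactness sits at $\theta=0$: there the argument $(\psi(t)-\psi(0))^q\theta$ of the cosine operator collapses to $0$, and $C(0)=I$ is the single non-compact member of the family in infinite dimensions. I would therefore isolate a neighbourhood of $\theta=0$ by introducing, for $\epsilon\in(0,1)$, the truncated operator
\[
C^\psi_{q,\epsilon}(t,0)x=\int_{\epsilon}^{+\infty}M_q(\theta)\,C\big((\psi(t)-\psi(0))^q\theta\big)x\,d\theta,\qquad x\in X .
\]

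First I would control the discarded part. Using the uniform bound \eqref{TT1} together with the normalization $\int_0^{+\infty}M_q(\theta)\,d\theta=1$ (which is Lemma~\ref{jjjjj}(2) with $n=0$), one obtains
\[
\big\|C^\psi_q(t,0)-C^\psi_{q,\epsilon}(t,0)\big\|_{L(X)}
\le \mathcal{K}\int_0^{\epsilon}M_q(\theta)\,d\theta\longrightarrow 0
\quad\text{as }\epsilon\to0 ,
\]
so it suffices to prove that each $C^\psi_{q,\epsilon}(t,0)$ is compact.

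For the truncated operator, note that for $\theta\ge\epsilon$ and $t>0$ the argument satisfies $(\psi(t)-\psi(0))^q\theta\ge(\psi(t)-\psi(0))^q\epsilon>0$, so every operator $C\big((\psi(t)-\psi(0))^q\theta\big)$ appearing in the integral is compact. I would then read $C^\psi_{q,\epsilon}(t,0)$ as a Bochner integral in $L(X)$ of the map $\theta\mapsto M_q(\theta)\,C\big((\psi(t)-\psi(0))^q\theta\big)$, whose values lie in $K(X)$; since a Bochner integral of a function valued in a closed subspace stays in that subspace, $C^\psi_{q,\epsilon}(t,0)$ is compact, and the estimate above then forces $C^\psi_q(t,0)$ to be compact. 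I expect the genuine difficulty to lie exactly here: to legitimize the Bochner integral in the operator norm one needs $\theta\mapsto C\big((\psi(t)-\psi(0))^q\theta\big)$ to be continuous in the uniform operator topology on $[\epsilon,+\infty)$, which is strictly more than the strong continuity provided by the definition. This upgrade is supplied by the compactness hypothesis, a compact strongly continuous cosine family being norm continuous for positive arguments; equivalently, with $a=(\psi(t)-\psi(0))^q$ one may pull out a compact factor through the d'Alembert relation $C(a\theta)=2C(a\epsilon)C(a(\theta-\epsilon))-C(a(\theta-2\epsilon))$, the leading term $2C(a\epsilon)\int_\epsilon^{+\infty}M_q(\theta)C(a(\theta-\epsilon))(\cdot)\,d\theta$ being compact as a compact operator composed with a bounded one.

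Finally, for $R^\psi_q(t,0)x=\int_0^t C^\psi_q(s,0)x\,\frac{\psi'(s)}{\psi'(0)}\,ds$ I would run the same two-step scheme in the variable $s$. Having just shown that $C^\psi_q(s,0)$ is compact for every $s>0$, and recalling from Proposition~\ref{A} that $s\mapsto C^\psi_q(s,0)$ is strongly (and, after the norm-continuity upgrade, uniformly) continuous, I cut off $[0,\epsilon]$, estimating via Proposition~\ref{aa} that
\[
\Big\|\int_0^\epsilon C^\psi_q(s,0)\tfrac{\psi'(s)}{\psi'(0)}\,ds\Big\|_{L(X)}
\le \mathcal{K}\,\frac{\psi(\epsilon)-\psi(0)}{\psi'(0)}\longrightarrow 0
\quad\text{as }\epsilon\to0 ,
\]
and identify $\int_\epsilon^t C^\psi_q(s,0)\frac{\psi'(s)}{\psi'(0)}\,ds$ as a Bochner integral of compact operators, hence compact. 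Closedness of $K(X)$ then yields the compactness of $R^\psi_q(t,0)$, completing the argument.
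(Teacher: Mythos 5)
Your overall strategy---truncate away a neighbourhood of $\theta=0$, where $C(0)=I$ obstructs compactness, approximate by compact operators, and conclude from the norm-closedness of the compact operators in $L(X)$---is the same skeleton as the paper's proof, and your truncation estimate is exactly the paper's term $I_1$ (and $I_3$, $I_5$ for $R^\psi_q$). The divergence, and the place where your argument has a genuine gap, is the compactness of the truncated operator $C^\psi_{q,\epsilon}(t,0)$. You rest it on the claim that a strongly continuous cosine family with $C(t)$ compact for $t>0$ is norm continuous on $(0,\infty)$; unlike the semigroup analogue, this is not a standard quotable fact for cosine families, and you do not prove it. Your fallback via the d'Alembert relation $C(a\theta)=2C(a\epsilon)C\bigl(a(\theta-\epsilon)\bigr)-C\bigl(a(\theta-2\epsilon)\bigr)$ is not ``equivalent'' to that claim and does not close the argument: the leading term is indeed compact-composed-with-bounded, but the remainder $-\int_\epsilon^{+\infty}M_q(\theta)\,C\bigl(a(\theta-2\epsilon)\bigr)\,d\theta$ has exactly the same structure as the operator you started from (its argument vanishes at $\theta=2\epsilon$), so you have only displaced the problem.

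The paper's device avoids this entirely: it takes as approximant $C(\varepsilon^{q}\delta)\int_{\delta}^{+\infty}M_q(\theta)\,C\bigl((\psi(t)-\psi(0))^{q}\theta-\varepsilon^{q}\delta\bigr)(\cdot)\,d\theta$, i.e.\ an explicitly compact factor $C(\varepsilon^{q}\delta)$ composed with a strongly defined bounded operator---compact with no measurability or norm-continuity issues---and then controls the error (its terms $I_2$, $I_4$) using only the strong continuity of the cosine family, which is available by definition. To repair your version, either prove the norm-continuity upgrade or adopt this pre-composition trick in place of the Bochner-integral argument; the same remark applies to your treatment of $R^\psi_q(t,0)$, whose ``uniformly continuous'' integrand again relies on the unproved upgrade. (Incidentally, the identity $2C(t)^2=C(2t)+I$ shows that compactness of $C(t)$ for every $t>0$ forces $I$ to be compact, hence $\dim X<\infty$; so your key lemma is vacuously true, but for a degenerate reason that also trivializes the proposition---this is a defect of the hypothesis itself rather than of your argument, and the paper's proof does not rely on it.)
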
  

\begin{proof}
For each positive constant $\beta$, set 
$ B_\beta =\lbrace \Theta \in X \ | \ \Vert \Theta \Vert_X \leq \beta \rbrace $, 
which is a bounded, closed, and convex subset of $X$. We  prove that for any 
positive constant $\beta$ and $t > 0$, the sets
$$ 
W_1(t)=\lbrace C^\psi_q(t,0) \Theta_0, \quad   \Theta_0 \in    
B_\beta \rbrace \quad 
\text{and} \quad  W_2(t)=\lbrace R^\psi_q(t,0) \Theta_0, \quad   \Theta_0 \in    B_\beta \rbrace,  
$$
are relatively compact in $X$. Let us consider $ t>0$ be fixed, 
for any  $\delta > 0,$ and  $ 0<\varepsilon \leq t$.  
Let $\Theta_0 \in B_\beta$. We define the subsets
\begin{equation*}
\begin{split}
W_1^{\delta,\varepsilon}(t)
&=\left\lbrace  C(\varepsilon^q \delta)\int_\delta^{+\infty}M_q(\theta)
C((\psi(t)-\psi(0))^q\theta-\varepsilon^q \delta)\Theta_0 d\theta
\right\rbrace,\\
W_2^{\delta,\varepsilon}(t)
&=\left\lbrace  C(\varepsilon^q \delta)
\int_0^{t-\varepsilon}\int_\delta^{+\infty}M_q(\theta)C((\psi(s)
-\psi(0))^q\theta-\varepsilon^q \delta)\Theta_0 d\theta 
\dfrac{\psi^\prime(s)}{\psi^\prime(0)} ds\right\rbrace, 
\end{split}
\end{equation*}
and, for all $\Theta_0 \in    B_\beta$, we denote 
\begin{equation*}
C_q^{\psi,\delta,\varepsilon}(t,0)\Theta_0
=  C(\varepsilon^q \delta)\int_\delta^{+\infty}M_q(\theta)C((\psi(t)
-\psi(0))^q\theta-\varepsilon^q \delta)\Theta_0 d\theta,
\end{equation*}
and
\begin{multline*}
R_q^{\psi,\delta,\varepsilon}(t,0)\Theta_0\\
=  C(\varepsilon^q \delta)\int_0^{t-\varepsilon}\int_\delta^{+\infty}
M_q(\theta)C((\psi(s)-\psi(0))^q\theta-\varepsilon^q \delta)\Theta_0 
d\theta \dfrac{\psi^\prime(s)}{\psi^\prime(0)} ds.
\end{multline*}
For $\theta \in \, ]0,+\infty[$, it follows from
the uniform convergence of Mainardi's Wright 
type function and the uniform boundedness 
of the cosine  family \eqref{TT1}, that
$$
\begin{aligned}
\left\|C_q^{\psi,\delta,\varepsilon}(t,0)\Theta_0 \right\|_{X} 
&\leq \mathcal{K} \int_\delta^{+\infty}M_q(\theta)\left\| C((\psi(t)-\psi(0))^q\theta
-\varepsilon^q \delta)\Theta_0\right\|_{X} d\theta\\
&\leq \mathcal{K}^2 \left\| \Theta_0\right\|_{X}\int_0^{+\infty}M_q(\theta) d\theta\\
&\leq \mathcal{K}^2 \left\| \Theta_0 \right\|_{X},
\end{aligned}
$$
and 
$$
\begin{aligned}
\left\|R_q^{\psi,\delta,\varepsilon}(t,0)\Theta_0 \right\|_{X}
&\leq \mathcal{K}^2 \left\| \Theta_0\right\|_{X} \int_0^{t} 
\dfrac{\psi^\prime(s)}{\psi^\prime(0)} \int_0^{+\infty}  M_q(\theta) d\theta ds\\
&\leq \mathcal{K}^2 \left\| \Theta_0\right\|_{X}\left[ 
\dfrac{\psi(t)-\psi(0)}{\psi^\prime(0)}\right]\\
&\leq \mathcal{K}^2 \left\| \Theta_0\right\|_{X}\left[ 
\dfrac{\psi(T)-\psi(0)}{\psi^\prime(0)}\right].
\end{aligned}
$$
Hence, by the compactness of $C(t)$,  we see that  
$ W_1^{\delta,\varepsilon}(t)$ and $ W_2^{\delta,\varepsilon}(t)$ 
are relatively compact in $X$, for all $t >0$. Furthermore,
$$
\begin{aligned}
&\left\|C_q^{\psi,\delta,\varepsilon}(t,0)\Theta_0
-C_q^{\psi}(t,0)\Theta_0 \right\|_{X} \leq \left\| 
\int_0^{\delta}M_q(\theta)C((\psi(t)-\psi(0))^q\theta)\Theta_0 d\theta\right\|_{X}\\
&\quad+\left\|C(\varepsilon^q \delta) \int_\delta^{+\infty}
M_q(\theta)C((\psi(t)-\psi(0))^q\theta-\varepsilon^q \delta)\Theta_0 d\theta\right.\\
&\left.\qquad\qquad
-\int_\delta^{+\infty}M_q(\theta)C((\psi(t)-\psi(0))^q\theta)
\Theta_0 d\theta\right\|_{X}\\ 
&=:I_1+I_2,
\end{aligned}
$$
and 
$$
\begin{aligned}
&\left\|R_q^{\psi,\delta,\varepsilon}(t,0)
\Theta_0-R_q^{\psi}(t,0)\Theta_0 \right\|_{X}\\
&\quad \leq 
\left\|\int_{t-\varepsilon}^{t} \int_0^{+\infty}M_q(\theta)
C((\psi(s)-\psi(0))^q\theta )\Theta_0 d\theta \dfrac{\psi^\prime(s)}{\psi^\prime(0)} ds \right\|_{X}\\
&\qquad+\left\| \int_0^{t-\varepsilon} \int_\delta^{+\infty}  
M_q(\theta)\left[  C(\varepsilon^q \delta) 
C((\psi(s)-\psi(0))^q\theta-\varepsilon^q\delta)\right.\right.\\
&\left.\left.\qquad\qquad
-C((\psi(s)-\psi(0))^q\theta)\right] 
\Theta_0 d\theta\dfrac{\psi^\prime(s)}{\psi^\prime(0)} ds \right\|_{X}\\ 
&\qquad+\left\|\int_{0}^{t-\varepsilon} \int_0^{\delta}
M_q(\theta) C((\psi(s)-\psi(0))^q\theta )
\Theta_0 d\theta\dfrac{\psi^\prime(s)}{\psi^\prime(0)} ds \right\|_{X}\\
&\quad =:I_3+I_4+I_5.
\end{aligned}
$$
Using the uniform boundedness of the cosine family, we get 
$$ 
I_1\leq \mathcal{K} \Vert \Theta_0 \Vert_X \int_0^{\delta} M_q(\theta) d\theta
\longrightarrow 0 \  , \quad as \quad  \delta \longrightarrow 0.
$$ 
In addition,
$$
\begin{aligned}
I_2 &\leq \Bigg\| \int_\delta^{+\infty}M_q(\theta)\left[ C(\varepsilon^q \delta)
C((\psi(t)-\psi(0))^q\theta-\varepsilon^q \delta)\right.\\ 
&\left. \qquad\qquad
- C((\psi(t)-\psi(0))^q \theta) \right] \Theta_0 d\theta\Bigg\|_{X}\\
&\leq \Bigg\| \int_\delta^{+\infty}M_q(\theta) C(\varepsilon^q \delta)
\left[C((\psi(t)-\psi(0))^q\theta-\varepsilon^q \delta)\right.\\ 
&\left. \qquad\qquad
- C((\psi(t)-\psi(0))^q\theta) \right] \Theta_0 d\theta\Bigg\|_{X}\\ 
&\quad+\left\| \int_\delta^{+\infty}M_q(\theta) C((\psi(t)-\psi(0))^q
\theta)\left[C(\varepsilon^q \delta) - I \right] \Theta_0 d\theta\right\|_{X}.
\end{aligned}
$$
Using condition \eqref{TT2}, we derive 
$$
\begin{aligned}
I_2&\leq \mathcal{K} \int_0^{+\infty}M_q(\theta) \left\|\left[C((\psi(t)-\psi(0))^q\theta
-\varepsilon^q \delta) - C((\psi(t)-\psi(0))^q\theta) \right] \Theta_0\right\|_{X} d\theta\\ 
&\quad+ \mathcal{K}  \int_0^{+\infty}M_q(\theta)   d\theta.
\end{aligned}
$$

Using the strong continuity of the cosine family, 
we get $I_2 \longrightarrow 0$ as $\delta \longrightarrow 0$, 
which implies that $I_4 \longrightarrow 0$ as $\delta \longrightarrow 0$.
Moreover,
$$ 
\begin{aligned}
I_3 &\leq \mathcal{K} \left[ \dfrac{\psi(t)
-\psi(t-\varepsilon)}{\psi^\prime(0)}\right]\Vert \Theta_0 \Vert_X   
\longrightarrow 0 \  , \quad as \quad  \varepsilon \longrightarrow 0;\\
I_5 &\leq  \mathcal{K} \left[ \dfrac{\psi(t)-\psi(0)}{\psi^\prime(0)}\right]\Vert 
\Theta_0 \Vert_X \int_0^\delta M_q(\theta)d\theta  
\longrightarrow 0 \  , \quad as \quad  \delta \longrightarrow 0.
\end{aligned}
$$
Hence, $W_1^{\delta,\varepsilon}(t)$ 
and $ W_2^{\delta,\varepsilon}(t)$ are relatively compact sets arbitrarily 
close to the sets $ W_1(t)$ and $ W_2(t)$ for every $t > 0$. 
Thus,  the sets $ W_1(t)$ and $ W_2(t)$ are relatively compact 
in $X$ for every $t > 0$.\proofend
\end{proof}

We now prove a relation between the operators $P_q^\psi$ and $C_q^\psi$.

\begin{proposition}
If Theorem~\ref{bbb} holds, then for every $t > 0$ we have: 
\begin{enumerate}
\item $ (\psi(t)-\psi(0))^{q-1}P_q^{\psi}(t,0)x
=I_{0^+}^{2q-1,\psi}C_q^{\psi}(t,0)x,$ \quad for $x\in X$;

\item $ \dfrac{d}{dt} C_q^{\psi}(t,0)x
=\psi^\prime(t)(\psi(t)-\psi(0))^{q-1}AP_q^{\psi}(t,0)x,$ \quad for $x\in X$.
\end{enumerate}
\end{proposition}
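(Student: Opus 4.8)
The plan is to work entirely with the generalized Laplace transform $\mathcal{L}_\psi$, exploiting the transforms of the solution operators that were already computed inside the proof of Theorem~\ref{bbb}. Specifically, that proof shows (in its items 1 and 2, before the convolution step) that for $\mathrm{Re}(\lambda)>0$ with $\lambda^\alpha\in\rho(A)$ and every $x\in X$,
\[
\mathcal{L}_\psi\!\left(C^\psi_q(t,0)x\right)(\lambda)=\lambda^{\alpha-1}(\lambda^\alpha I-A)^{-1}x,
\qquad
\mathcal{L}_\psi\!\left((\psi(t)-\psi(0))^{q-1}P^\psi_q(t,0)x\right)(\lambda)=(\lambda^\alpha I-A)^{-1}x.
\]
Since $q=\alpha/2$, the crucial arithmetic fact is $2q-1=\alpha-1$, which makes the order of the fractional integral in item~1 exactly match the exponent $\alpha-1$ appearing in the first transform. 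Note also that $\alpha\in\,]1,2]$ forces $q\in\,]1/2,1]$, so $2q-1>0$ and the operator $I_{0^+}^{2q-1,\psi}$ is a genuine (positive-order) fractional integral, to which Proposition~\ref{lokl} applies.

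For item~1, I would apply the transform to the right-hand side $I_{0^+}^{2q-1,\psi}C^\psi_q(t,0)x$. By Proposition~\ref{lokl}, the transform of a fractional integral of order $2q-1$ is division by $\lambda^{2q-1}=\lambda^{\alpha-1}$, so $\mathcal{L}_\psi\!\left(I_{0^+}^{2q-1,\psi}C^\psi_q(t,0)x\right)(\lambda)=\lambda^{-(\alpha-1)}\,\lambda^{\alpha-1}(\lambda^\alpha I-A)^{-1}x=(\lambda^\alpha I-A)^{-1}x$. This is precisely the transform of the left-hand side $(\psi(t)-\psi(0))^{q-1}P^\psi_q(t,0)x$ recorded above. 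The uniqueness theorem for the generalized Laplace transform (invoked at the end of the proof of Theorem~\ref{bbb}) then yields the identity.

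For item~2, I would first absorb the factor $\psi^\prime(t)$ by rewriting $\frac{d}{dt}=\psi^\prime(t)\big(\frac{1}{\psi^\prime(t)}\frac{d}{dt}\big)$, so the claim is equivalent to $\big(\tfrac{1}{\psi^\prime(t)}\tfrac{d}{dt}\big)C^\psi_q(t,0)x=(\psi(t)-\psi(0))^{q-1}AP^\psi_q(t,0)x$. A short integration by parts gives the $\psi$-derivative rule $\mathcal{L}_\psi\!\big(\tfrac{1}{\psi^\prime}\tfrac{d}{dt}g\big)(\lambda)=\lambda\,\mathcal{L}_\psi(g)(\lambda)-g(0)$, the boundary term at infinity vanishing because $C^\psi_q(\cdot,0)x$ is bounded (Proposition~\ref{aa}, item~\ref{jjjjjj}) and hence of $\psi(t)$-exponential order. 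Using $C(0)=I$ together with $\int_0^{+\infty}M_q(\theta)\,d\theta=1$ (Lemma~\ref{jjjjj}) gives $C^\psi_q(0,0)x=x$, so the transform of the left-hand side is $\lambda^\alpha(\lambda^\alpha I-A)^{-1}x-x=A(\lambda^\alpha I-A)^{-1}x$, while the transform of the right-hand side is $A\,(\lambda^\alpha I-A)^{-1}x$ by the second displayed transform above; uniqueness again closes the argument. The main obstacle I anticipate is the rigorous handling of the unbounded, closed operator $A$: one must justify pulling $A$ through the Laplace integral (using closedness of $A$ and the resolvent identity $\lambda^\alpha(\lambda^\alpha I-A)^{-1}-I=A(\lambda^\alpha I-A)^{-1}$) and verify that all functions involved lie in the domain where the generalized transform and its inversion are valid; the commuting of $A$ with the integral is best secured by restricting to $x\in\mathcal{D}(A)$ and then extending by density and closedness.
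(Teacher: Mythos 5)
Your argument for item~1 is essentially the paper's: both identify $\mathcal{L}_\psi(C^\psi_q(\cdot,0)x)=\lambda^{2q-1}(\lambda^{2q}I-A)^{-1}x$ and $\mathcal{L}_\psi((\psi(\cdot)-\psi(0))^{q-1}P^\psi_q(\cdot,0)x)=(\lambda^{2q}I-A)^{-1}x$ from the proof of Theorem~\ref{bbb} and match them up to a factor $\lambda^{1-2q}$; the only cosmetic difference is that the paper realizes $\lambda^{1-2q}$ as the transform of the kernel $(\psi(t)-\psi(0))^{2q-2}/\Gamma(2q-1)$ and writes out the resulting $\psi$-convolution explicitly before recognizing it as $I_{0^+}^{2q-1,\psi}$, whereas you invoke Proposition~\ref{lokl} directly. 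For item~2, however, your route is genuinely different: the paper differentiates under the integral sign in the definition of $C^\psi_q(t,0)x$, using the chain rule together with $\frac{d}{dt}C(t)x=AS(t)x$ from Proposition~\ref{jjn} to land immediately on $\psi^\prime(t)(\psi(t)-\psi(0))^{q-1}AP^\psi_q(t,0)x$, with no Laplace transform at all. Your transform-based argument (the $\psi$-derivative rule $\mathcal{L}_\psi((1/\psi^\prime)g^\prime)=\lambda\mathcal{L}_\psi(g)-g(0)$, the identity $C^\psi_q(0,0)x=x$, and the resolvent identity $\lambda^{\alpha}(\lambda^{\alpha}I-A)^{-1}-I=A(\lambda^{\alpha}I-A)^{-1}$) is correct and arguably more uniform, since it reuses the same machinery as item~1; its cost is that you must assume a priori that $\frac{d}{dt}C^\psi_q(t,0)x$ exists and is of $\psi$-exponential order before transforming it, and you must justify commuting the closed operator $A$ with the Laplace integral --- issues you rightly flag, and which the paper's direct computation trades for the (equally unexamined) hypotheses of Proposition~\ref{jjn} and differentiation under the integral sign. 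Both proofs are at a comparable level of rigor.
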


\begin{proof}
From the proof of Theorem~\ref{bbb}, for any $x \in X$, we know that for $Re\lambda > 0,$
$$
\lambda^{2q-1}(\lambda^{2q}I-A)^{-1}x
=\mathcal{L}_\psi(C^\psi_q(t,0)x)(\lambda) 
$$
and 
$$
(\lambda^{2q}I-A)^{-1}x
=\mathcal{L}_\psi((\psi(t)-\psi(0))^{q-1} P^\psi_q(t,0)x)(\lambda).
$$
Since
$$
\mathcal{L}_\psi\left( \dfrac{(\psi(t)
-\psi(0))^{2q-2}}{\Gamma(2q-1)}\right) (\lambda)=\lambda^{1-2q},
$$
we get
$$
\begin{aligned}
&(\lambda^{2q}I-A)^{-1}x
=\lambda^{1-2q} \mathcal{L}_\psi(C^\psi_q(t,0)x)(\lambda)\\
&=\mathcal{L}_\psi\left( \dfrac{(\psi(t)-\psi(0))^{2q-2}}{\Gamma(2q-1)}\right) 
(\lambda)\times\mathcal{L}_\psi(C^\psi_q(t,0)x)(\lambda)\\
&=\mathcal{L}_\psi\left( \int_0^t\dfrac{(\psi(\tau)-\psi(0))^{2q-2}}{\Gamma(2q-1)} 
C^\psi_q(\psi^{-1}(\psi(t)+\psi(0)-\psi(\tau)),0)x\psi^\prime(\tau) d\tau \right) (\lambda).
\end{aligned}
$$
Using the variable $s= \psi^{-1}(\psi(t)+\psi(0)-\psi(\tau))$, we deduce that
$$ 
(\lambda^{2q}I-A)^{-1}x
= \mathcal{L}_\psi\left( \int_0^t\dfrac{(\psi(t)-\psi(s))^{2q-2}}{\Gamma(2q-1)} 
C^\psi_q(s,0)x\psi^\prime(s) ds \right) (\lambda),
$$
which implies
\begin{multline*}
\mathcal{L}_\psi((\psi(t)-\psi(0))^{q-1} P^\psi_q(t,0)x)(\lambda)\\
= \mathcal{L}_\psi\left( \int_0^t\dfrac{(\psi(t)-\psi(s))^{2q-2}}{\Gamma(2q-1)} 
C^\psi_q(s,0)x\psi^\prime(s) ds \right) (\lambda).
\end{multline*}
Now, by applying the inverse Laplace transform, we get 
$$ 
(\psi(t)-\psi(0))^{q-1} P^\psi_q(t,0)x =  \dfrac{1}{\Gamma(2q-1)} 
\int_0^t (\psi(t)-\psi(s))^{2q-2} C^\psi_q(s,0)x\psi^\prime(s) ds.
$$
Moreover, from Proposition~\ref{jjn} we have:
$$
\begin{aligned}
\dfrac{d}{dt} C^\psi_q(t,0) x
&=\displaystyle\int_{0}^{+\infty}  M_q(\theta)\dfrac{d}{dt} 
C((\psi(t)-\psi(0))^q\theta)x d\theta\\
&=\displaystyle\int_{0}^{+\infty} q\theta \psi^\prime(t) (\psi(t)-\psi(0))^{q-1}  
M_q(\theta)A S((\psi(t)-\psi(0))^q\theta)x d\theta\\
&=\displaystyle  \psi^\prime(t) (\psi(t)-\psi(0))^{q-1} A \int_{0}^{+\infty} 
q\theta  M_q(\theta) S((\psi(t)-\psi(0))^q\theta)x d\theta\\
&=\displaystyle  \psi^\prime(t) (\psi(t)-\psi(0))^{q-1} A P^\psi_q(t,0)x.
\end{aligned}
$$
This completes the proof.\proofend
\end{proof}


\section{Existence and uniqueness of mild solution}
\label{sec:04}

In this section, we present two results where we prove the existence 
and uniqueness of the mild solution for system \eqref{k}. 
For the first result, we make the following assumptions:
\begin{description}
\item[(H1)] The function $ f : J \times X \rightarrow  X $ 
is continuous and there exists a constant $ N_0 >0 $ such that
$$ 
\Vert f(t,x)-f(t,y) \Vert_X \leq N_0 \Vert x-y\Vert_X ,\  
\forall t \in J,\ \forall (x,y)  \in   X\times X .
$$ 
\item[(H2)] The inequality 
$\dfrac{\mathcal{K} N_0 (\psi(T)-\psi(0))^{2q}}{\Gamma(2q+1)}<1$ holds.
\end{description}

\begin{theorem}
\label{jjjj}
If (H1) and (H2) are satisfied, then 
system  \eqref{k} has a unique mild solution.
\end{theorem}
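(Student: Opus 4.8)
The plan is to recast problem \eqref{k} as a fixed-point equation and apply the Banach contraction principle on the complete space $C(J;X)$. Motivated by the mild-solution formula of Theorem~\ref{bbb}, I define the solution operator $\mathcal{F}:C(J;X)\to C(J;X)$ by
$$
(\mathcal{F}\Theta)(t)=C^\psi_q(t,0)\Theta_{0}+R^\psi_q(t,0)\Theta_{1}+\int_0^t (\psi(t)-\psi(s))^{q-1}P^\psi_q(t,s)f(s,\Theta(s))\psi^\prime(s)\,ds,
$$
so that a mild solution of \eqref{k} is precisely a fixed point of $\mathcal{F}$.

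First I would verify that $\mathcal{F}$ is well defined, that is, that $\mathcal{F}\Theta\in C(J;X)$ whenever $\Theta\in C(J;X)$. The first two summands are continuous in $t$ by the strong continuity established in Proposition~\ref{A}. For the integral term I would use the bound $\Vert P^\psi_q(t,s)x\Vert_X\le \frac{\mathcal{K}(\psi(t)-\psi(s))^{q}}{\Gamma(2q)}\Vert x\Vert_X$ from Proposition~\ref{aa}, which, together with the boundedness of $s\mapsto\Vert f(s,\Theta(s))\Vert_X$ on the compact interval $J$ (continuity of $f$ from (H1)), shows the integrand is dominated by a constant times $(\psi(t)-\psi(s))^{2q-1}\psi^\prime(s)$; since $q\in\,]0,1]$ the combined exponent $2q-1>-1$ makes this singularity integrable, and the strong continuity of the operators gives continuity of the integral in $t$.

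The crux is the contraction estimate. For $\Theta,\Phi\in C(J;X)$ the first two terms of $\mathcal{F}\Theta-\mathcal{F}\Phi$ cancel, so only the integral remains. Combining the bound of Proposition~\ref{aa} with the Lipschitz hypothesis (H1) yields
$$
\Vert(\mathcal{F}\Theta)(t)-(\mathcal{F}\Phi)(t)\Vert_X\le \frac{\mathcal{K}N_0}{\Gamma(2q)}\,\Vert\Theta-\Phi\Vert\int_0^t(\psi(t)-\psi(s))^{2q-1}\psi^\prime(s)\,ds.
$$
The substitution $u=\psi(s)$ evaluates the integral to $\frac{(\psi(t)-\psi(0))^{2q}}{2q}$; using $2q\,\Gamma(2q)=\Gamma(2q+1)$ and $\psi(t)-\psi(0)\le\psi(T)-\psi(0)$, and taking the supremum over $t\in J$, I obtain
$$
\Vert\mathcal{F}\Theta-\mathcal{F}\Phi\Vert\le \frac{\mathcal{K}N_0(\psi(T)-\psi(0))^{2q}}{\Gamma(2q+1)}\,\Vert\Theta-\Phi\Vert.
$$

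Assumption (H2) states exactly that the constant on the right is strictly less than one, so $\mathcal{F}$ is a contraction on the complete metric space $C(J;X)$, and the Banach fixed-point theorem furnishes a unique fixed point, i.e. the unique mild solution of \eqref{k}. I expect the contraction estimate itself to be routine, being essentially forced by Proposition~\ref{aa} and the clean Beta-type integral; the main obstacle is rather the careful verification that $\mathcal{F}$ maps $C(J;X)$ into itself, particularly the continuity of the convolution term at the endpoint $t=0$ and across the integrable kernel singularity, where the strong-continuity results of Proposition~\ref{A} do the real work.
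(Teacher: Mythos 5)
Your proposal is correct and follows essentially the same route as the paper: define the solution operator from the mild-solution formula, combine the bound on $P^\psi_q$ from Proposition~\ref{aa} with the Lipschitz condition (H1) to get the contraction constant $\mathcal{K}N_0(\psi(T)-\psi(0))^{2q}/\Gamma(2q+1)$, invoke (H2), and apply Banach's fixed-point theorem. Your additional verification that the operator maps $C(J;X)$ into itself is a point the paper passes over silently, but it does not change the argument.
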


\begin{proof}
Let us consider the operator $ \Psi :\  C(J;X)\rightarrow C(J;X) $ defined by
\begin{multline*}
(\Psi \Theta)(t)=C^\psi_q(t,0) \Theta_{0} + R^\psi_q(t,0) \Theta_{1}\\ 
+ \int_0^t (\psi(t)-\psi(s))^{q-1} P^\psi_q(t,s)f(s,\Theta(s))\psi^\prime(s)  ds, \ t\in J.
\end{multline*}
Let us consider $\Theta ,\Phi \ \in \ C(J;X) $. Then,
$$
\begin{aligned}
\Vert&\Psi\Theta-\Psi\Phi \Vert
=\displaystyle\sup_{t\in J} \Vert (\Psi\Theta)(t)-(\Psi\Phi)(t)\Vert_X\\
&=\displaystyle\sup_{t\in J} \left\| \int_0^t (\psi(t)-\psi(s))^{q-1} 
P^\psi_q(t,s)[ f(s,\Theta(s))-f(s,\Phi(s))]\psi^\prime(s)ds \right\|_X \\
&\leq \displaystyle\sup_{t\in J}  \int_0^t \vert (\psi(t)-\psi(s))^{q-1} 
\vert \Vert P^\psi_q(t,s)\Vert_{_{L(X)}} 
\Vert f(s,\Theta(s)-f(s,\Phi(s)) \Vert_X \psi^\prime(s) ds.
\end{aligned}
$$
Using (H1) and Proposition~\ref{aa}, we get
$$
\begin{aligned}
\Vert \Psi\Theta-\Psi\Phi \Vert
&\leq \displaystyle\sup_{t\in J} \frac{\mathcal{K} N_0}{\Gamma(2q)} 
\int_0^t  (\psi(t)-\psi(s))^{2q-1}\Vert \Theta(s)-\Phi(s) \Vert_X \psi^\prime(s) ds\\
&\leq  \frac{\mathcal{K} N_0 (\psi(t)-\psi(0))^{2q}}{\Gamma(2q+1)}  
\displaystyle\sup_{s\in J} \Vert \Theta(s)-\Phi(s) \Vert_X\\
&\leq  \frac{\mathcal{K} N_0 (\psi(T)-\psi(0))^{2q}}{\Gamma(2q+1)}   
\Vert \Theta(\cdot)-\Phi(\cdot) \Vert.
\end{aligned}
$$
Now by using the hypothesis (H2) we obtain that
$$        
\Vert \Psi\Theta-\Psi\Phi \Vert\leq  \Vert \Theta(\cdot)-\Phi(\cdot) \Vert. 
$$
We deduce that $\Psi$ is a contraction mapping. Hence, 
by Banach's fixed point theorem, $\Psi$ has a unique fixed point 
in $C(J;X)$, $\Theta (t)=\Psi\Theta(t) $. Thus, it follows that the system   
\eqref{k} has a unique mild solution on $C(J;X)$.\proofend
\end{proof}

Before giving the second result, we introduce the following assumptions:
\begin{description}
\item[(H3)] The function $ f : J \times X \rightarrow  X $ 
is continuous and for all $\beta >0$ there exists 
a constant $ N_\beta >0 $ such that
$$ 
\Vert f(t,x(t))-f(t,y(t)) \Vert_X \leq N_\beta \Vert x(t)-y(t)\Vert_X ,
\  \forall t  \in J, \ \forall (x  ,  y)  \in  B_\beta\times B_\beta,
$$ 
where $B_\beta =\lbrace \Theta \in C(J;X) \ | \ \Vert \Theta \Vert \leq \beta \rbrace$.

\item[(H4)] The operators $C(t)$ and $ S(t)$  are compact for every $ t > 0 $.

\item[(H5)] For any $ \beta > 0 $, there exists a function 
$h_\beta \in  L^{\infty}(J; X)$ such that
$$ 
\displaystyle \sup_{\Vert y(t) \Vert\leq \beta} 
\Vert f(t,y(t)) \Vert_X \leq h_\beta(t) ,\ \forall  t\in J, 
$$
for all $y$ in $B_p$. Moreover, there exists a constant $\mathcal{H} > 0$ 
such that 
$$
\displaystyle \lim_{\beta \rightarrow \infty} \displaystyle\sup_{t\in J} 
\frac{\Vert h_\beta(t) \Vert_{L^{\infty}(J;X)}}{\beta} = \mathcal{H}.
$$
\end{description}

\begin{theorem}
\label{jjj}
Assume that the conditions (H3)--(H5) hold. 
If  
$$
\frac{ \mathcal{H} \mathcal{K} (\psi(T)-\psi(0))^{2q} }{ \Gamma(2q+1)} < 1,
$$
then the system \eqref{k} has a unique mild solution.
\end{theorem}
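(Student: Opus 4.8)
The plan is to prove existence and uniqueness separately, since under (H3)--(H5) the operator $\Psi$ need no longer be a contraction. Because (H3) supplies only a \emph{local} Lipschitz bound with constant $N_\beta$ that may be large, the Banach-type argument of Theorem~\ref{jjjj} is unavailable; instead I would obtain existence from Schauder's fixed point theorem, exploiting the compactness hypothesis (H4), and recover uniqueness from the generalized Gr\"{o}nwall inequality (Theorem~\ref{nnn}). Throughout I work with the same solution operator
$$(\Psi\Theta)(t)=C^\psi_q(t,0)\Theta_0+R^\psi_q(t,0)\Theta_1+\int_0^t(\psi(t)-\psi(s))^{q-1}P^\psi_q(t,s)f(s,\Theta(s))\psi^\prime(s)\,ds$$
on $C(J;X)$, together with the balls $B_\beta=\{\Theta\in C(J;X):\|\Theta\|\le\beta\}$.

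For existence, the first step is to produce a radius $\beta_0$ with $\Psi(B_{\beta_0})\subseteq B_{\beta_0}$. Using Proposition~\ref{aa} and (H5), and the elementary identity $\int_0^t(\psi(t)-\psi(s))^{2q-1}\psi^\prime(s)\,ds=\frac{(\psi(t)-\psi(0))^{2q}}{2q}$, one bounds $\|\Psi\Theta\|$ by $\mathcal{K}\|\Theta_0\|_X+\mathcal{K}\|\Theta_1\|_X\frac{\psi(T)-\psi(0)}{\psi^\prime(0)}+\frac{\mathcal{K}(\psi(T)-\psi(0))^{2q}}{\Gamma(2q+1)}\|h_\beta\|_{L^\infty(J;X)}$. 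Dividing by $\beta$ and letting $\beta\to\infty$, the two constant terms vanish and the last tends to $\frac{\mathcal{H}\mathcal{K}(\psi(T)-\psi(0))^{2q}}{\Gamma(2q+1)}<1$ by hypothesis, so a finite $\beta_0$ exists. The second step, continuity of $\Psi$ on $B_{\beta_0}$, follows from (H3) and Proposition~\ref{aa} via $\|\Psi\Theta-\Psi\Phi\|\le\frac{\mathcal{K}N_{\beta_0}(\psi(T)-\psi(0))^{2q}}{\Gamma(2q+1)}\|\Theta-\Phi\|$ (Lipschitz, though not contractive). The third and decisive step is that $\Psi(B_{\beta_0})$ is relatively compact in $C(J;X)$: I would prove equicontinuity in $t$ from the strong continuity of the operators (Proposition~\ref{A} and its companion statement for $(\psi(t)-\psi(s))^{q-1}P^\psi_q$), and pointwise relative compactness of $\{(\Psi\Theta)(t):\Theta\in B_{\beta_0}\}$ in $X$ by a $\delta,\varepsilon$-truncation in the spirit of Proposition~\ref{ojg}, now applied to the integral term through the compactness of $S(t)$. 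Arzel\`a--Ascoli then yields relative compactness, and Schauder's fixed point theorem provides a fixed point $\Theta=\Psi\Theta$, i.e.\ a mild solution.

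For uniqueness, suppose $\Theta$ and $\Phi$ are mild solutions; both lie in some $B_\beta$. Subtracting, the initial-data terms cancel, and Proposition~\ref{aa} with (H3) give
$$\|\Theta(t)-\Phi(t)\|_X\le\frac{\mathcal{K}N_\beta}{\Gamma(2q)}\int_0^t(\psi(t)-\psi(s))^{2q-1}\|\Theta(s)-\Phi(s)\|_X\,\psi^\prime(s)\,ds.$$
Applying Theorem~\ref{nnn} with $\varsigma(t)=\|\Theta(t)-\Phi(t)\|_X$, $\sigma\equiv0$, $\Lambda\equiv\frac{\mathcal{K}N_\beta}{\Gamma(2q)}$ and exponent $\alpha=2q$ forces $\varsigma\equiv0$, so $\Theta=\Phi$.

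The main obstacle is the relative compactness in the existence step. Note that Proposition~\ref{ojg} is not directly applicable here: it concerns the operators $C^\psi_q$ and $R^\psi_q$ with varying argument, whereas in the fixed-point problem $\Theta_0,\Theta_1$ are fixed, so the first two terms already form a singleton family, and the genuine difficulty sits in the Hammerstein-type integral term acting on $f(s,\Theta(s))$. This term carries the weakly singular kernel $(\psi(t)-\psi(s))^{q-1}$ with $q-1\in(-\tfrac12,0]$, so the truncation must split off a neighbourhood of the endpoint $s=t$ to tame the singularity, insert a compact factor $S(\varepsilon^q\delta)$ to extract relative compactness, and control the remainders uniformly over $B_{\beta_0}$ using (H5) and Lemma~\ref{jjjjj}; at the same time one must establish equicontinuity uniformly as $t\to0^+$, where both the kernel and the lower limit of integration degenerate.
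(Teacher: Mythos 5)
Your proposal follows essentially the same route as the paper's proof: an invariant ball $B_\beta$ obtained from (H5) and the smallness condition, relative compactness of $\Psi(B_\beta)$ via equicontinuity plus an $\varepsilon,\delta$-truncation of the singular integral term using the compact sine family, Arzel\`a--Ascoli and Schauder for existence, and the generalized Gr\"onwall inequality for uniqueness. Your added remarks (explicit continuity of $\Psi$ for Schauder, and the observation that the first two terms are a singleton family so the compactness burden falls entirely on the integral term) are correct refinements of the same argument.
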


\begin{proof} 
For any $ \beta > 0$, let us consider  
$B_\beta =\lbrace \Theta \in C(J;X) \ | \ \Vert \Theta \Vert \leq \beta \rbrace $ 
a  subset of $C(J;X)$. Consider the operator 
$ \Psi : C(J;X)\rightarrow C(J;X) $ defined by
\begin{multline*}
(\Psi \Theta)(t)=C^\psi_q(t,0) \Theta_{0} + R^\psi_q(t,0) \Theta_{1}\\ 
+ \int_0^t (\psi(t)-\psi(s))^{q-1} P^\psi_q(t,s)f(s,\Theta(s))
\psi^\prime(s)  ds, \ t\in J.
\end{multline*}
\begin{description}
\item[Step 1.] 
We show that $ \Psi (B_\beta) \subset B_\beta$.
Assume that for each $ \beta > 0$ there exists   
$\Theta \ \in \ B_\beta $ and $ t \in  ]0, T] $ 
such that $ \Vert (\Psi \Theta)(t) \Vert_X >\beta $. Then, 
$$
\begin{aligned}
\beta &< \Vert (\Psi \Theta)(t) \Vert_X \\
&= \Big\| C^\psi_q(t,0) \Theta_{0} + R^\psi_q(t,0) \Theta_{1}\\ 
& \qquad + \int_0^t (\psi(t)-\psi(s))^{q-1} 
P^\psi_q(t,s)f(s,\Theta(s))\psi^\prime(s)  ds \Big\|_X \\
& \quad \leq \Vert C^\psi_q(t,0) \Theta_{0} \Vert_X
+\Vert R^\psi_q(t,0)\Theta_1\Vert_X \\
&\qquad +  \int_0^t  (\psi(t)-\psi(s))^{q-1}  \Vert P^\psi_q(t,s) 
\Vert_{L(X)} \Vert f(s,\Theta(s)) \Vert_X \psi^\prime(s) ds.
\end{aligned}
$$                                           
Using (H5) and Proposition~\ref{aa}, we get:
$$ 
\beta \leq  \mathcal{K} \Vert \Theta_0\Vert_X
+\frac{\mathcal{K} (\psi(T)-\psi(0))^{2q}}{\psi^\prime(0)\Gamma(2q)}
\Vert \Theta_1\Vert_X +\frac{\mathcal{K} \Vert h_\beta(t) 
\Vert_{L^{\infty}} (\psi(T)-\psi(0))^{2q} }{\Gamma(2q+1)}. 
$$
Dividing both sides by $ \beta $ and taking the sup limit  
as $ \beta \rightarrow +\infty $, we obtain:
$$ 
1< \frac{ \mathcal{H} \mathcal{K} (\psi(T)-\psi(0))^{2q} }{ \Gamma(2q+1)}  ,
$$
which is a contradiction with Theorem~\ref{jjj}. 
Then, $\Psi(B_\beta) \subset B_\beta $.

\item[Step 2.] We prove that $ \Psi (B_\beta)$ is equicontinuous. 
For$ \ 0 <s\leq  t_2  < t_1\leq T $, we have
$$
\begin{aligned}
&\Vert \Psi\Theta(t_1)-\Psi\Theta(t_2) \Vert_X \\
&\leq \Vert  
(C^\psi_q(t_1,0)-C^\psi_q(t_2,0))\Theta_0\Vert_X
+\Vert( R^\psi_q(t_1,0)-R^\psi_q(t_2,0))\Theta_1\Vert_X \\
&\quad +\Bigg\| \int_0^{t_2} [(\psi(t_1)-\psi(s))^{q-1}P^\psi_q(t_1,s)
-(\psi(t_2)-\psi(s))^{q-1} P^\psi_q(t_2,s)]\\
& \qquad\qquad \times f(s,\Theta(s)) \psi^\prime(s) ds \Bigg\|_X \\
& \quad +\left\| \int_{t_2}^{t_1} (\psi(t_1)-\psi(s))^{q-1}
P^\psi_q(t_1,s)f(s,\Theta(s))\psi^\prime(s) ds \right\|_X \\
&\leq  \Vert C^\psi_q(t_1,0)-C^\psi_q(t_2,0) \Vert_{L(X)} \Vert \Theta_0 
\Vert_X+\Vert R^\psi_q(t_1,0)-R^\psi_q(t_2,0)\Vert_{L(X)} \Vert \Theta_1\Vert_X \\
&\quad+ \int_{t_2}^{t_1}  (\psi(t_1)-\psi(s))^{q-1} 
\Vert P^\psi_q(t_1,s) \Vert_{L(X)} 
\Vert f(s,\Theta(s)) \Vert_X ds \\
&\quad+ \int_0^{t_2} \left[ \psi(t_1)-\psi(s))^{q-1} -(\psi(t_2)-\psi(s))^{q-1} \right]
\Vert P^\psi_q(t_1,s)\Vert_{L(X)} \\
&\qquad\qquad\quad \times 
\Vert f(s,\Theta(s))\Vert_X \psi^\prime(s) ds\\
&\quad+ \int_0^{t_2}  (\psi(t_2)-\psi(s))^{q-1} \Vert  P^\psi_q(t_1,s)-P^\psi_q(t_2,s) 
\Vert_{L(X)}  \Vert f(s,\Theta(s))\Vert_X \psi^\prime(s) ds\\
&:=Q_1+Q_2+Q_3+Q_4+Q_5.
\end{aligned}
$$
By Proposition~\ref{A}, it is clear that 
$Q_1, Q_2\rightarrow 0$ as $t_1 \rightarrow t_2$. We obtain that
$$ 
Q_3 \leq \frac{\mathcal{K} \Vert h_\beta 
\Vert_{L^{\infty}}}{\Gamma(2q+1)} (\psi(t_1)-\psi(t_2))^{2q} 
\longrightarrow 0, \quad as \quad  t_1 \longrightarrow t_2,                   
$$
and 
\begin{multline*}
Q_4\leq \left[ (\psi(T)-\psi(0))^{q}\right] \dfrac{2\mathcal{K} \Vert h_\beta 
\Vert_{L^{\infty}}}{\Gamma(2q+1)}\\
\times \left[ (\psi(t_1)-\psi(0))^{q}
-(\psi(t_2)-\psi(0))^{q}-(\psi(t_1)-\psi(t_2))^{q}\right]. 
\end{multline*}
Hence, $ Q_4\longrightarrow 0 $ as $ t_1 \longrightarrow t_2  $.
For $ Q_5 $, taking $ \varepsilon \in  ]0, t_2[ $ small enough,  we have:
$$
\begin{aligned}
Q_5 &\leq  \int_0^{t_2-\varepsilon}  (\psi(t_2)-\psi(s))^{q-1} \Vert  
P^\psi_q(t_1,s)-P^\psi_q(t_2,s) \Vert_{L(X)} \\
&\qquad\qquad \times \Vert f(s,\Theta(s))\Vert_X \psi^\prime(s) ds \\ 
& \quad +\int_{t_2-\varepsilon}^{t_2}  (\psi(t_2)-\psi(s))^{q-1} \Vert  
P^\psi_q(t_1,s)-P^\psi_q(t_2,s) \Vert_{L(X)} \\
&\qquad\qquad \times \Vert f(s,\Theta(s))\Vert_X \psi^\prime(s) ds\\
&\leq \frac{ \Vert h_\beta \Vert_{L^{\infty}}}{q} \sup_{0<s<t_2-\varepsilon}\Vert  
P^\psi_q(t_1,s)-P^\psi_q(t_2,s) \Vert_{L(X)} \\
&\qquad \times \left[ (\psi(t_2)
-\psi(0))^{q}-(\psi(t_2)-\psi(t_2-\varepsilon))^{q}\right]\\
&\qquad + \frac{2 \mathcal{K} \Vert h_\beta \Vert_{L^{\infty}}}{\Gamma(2q+1)} 
\left[ (\psi(t_2)-\psi(t_2-\varepsilon))^{q}\right].
\end{aligned}
$$
Consequently, $Q_5 \rightarrow 0$ as ${t_2 \rightarrow t_1}$ and $ \varepsilon \rightarrow 0$.
Thus, $ \Psi(B_\beta)$ is equicontinuous.

\item[Step 3.] We prove that, for any $ t\in J $, 
$ G(t) =\lbrace (\Psi\Theta)(t) \ |\ \Theta \in B_\beta\rbrace$ is relatively compact in $X$.
Let us consider $ t\in J,$  $\delta > 0,$  $ 0<\varepsilon \leq t$, 
and    $\Theta \in B_\beta$. We define the following operator: 
\begin{equation*}
\begin{aligned}
&(\Psi_{{\varepsilon,\delta}} \Theta)(t)= C^\psi_q(t,0) \Theta_{0} + R^\psi_q(t,0) \Theta_{1}\\
&\quad+\frac{S(\varepsilon^q\delta)}{\varepsilon^q\delta} \int_{0}^{t-\varepsilon} 
\int_{\delta}^{+\infty} (\psi(t)-\psi(s))^{q-1} q\theta M_q(\theta)
S((\psi(t)-\psi(s))^{q}\theta-\varepsilon^q\delta) \\
&\qquad\qquad\qquad\qquad\qquad\qquad 
\times f(s,\Theta(s))\psi^\prime(s) d\theta ds.
\end{aligned}
\end{equation*}
For $\theta \in \, ]0,+\infty[$,
by the uniform convergence of Mainardi's Wright type function   
and the uniform boundedness of the cosine and sine families,
$$
\begin{aligned}
& \Bigg\|  \int_{0}^{t-\varepsilon} \int_{\delta}^{+\infty} (\psi(t)-\psi(s))^{q-1} 
q\theta M_q(\theta)S((\psi(t)-\psi(s))^{q}\theta-\varepsilon^q\delta)\\ 
&\qquad\qquad\qquad \times f(s,\Theta(s))\psi^\prime(s) d\theta ds \Bigg\|_X \\
&\quad  \leq  \int_{0}^{t-\varepsilon} \int_{\delta}^{+\infty} 
(\psi(t)-\psi(s))^{q-1} q\theta M_q(\theta)\left\|S((\psi(t)-\psi(s))^{q}
\theta-\varepsilon^q\delta)\right\|_{L(X)} \\
&\qquad\qquad\qquad \times \left\| f(s,\Theta(s))  \right\|_X d\theta \psi^\prime(s)ds.
\end{aligned}
$$
Using condition~\eqref{TT1} and assumption (H5), we obtain: 
$$
\begin{aligned}
& \Bigg\|  \int_{0}^{t-\varepsilon} \int_{\delta}^{+\infty} (\psi(t)-\psi(s))^{q-1} 
q\theta M_q(\theta)S((\psi(t)-\psi(s))^{q}\theta-\varepsilon^q\delta)\\ 
&\qquad\qquad\qquad \times f(s,\Theta(s))\psi^\prime(s) d\theta ds \Bigg\|_X \\
&\  \leq q \Vert h_\beta \Vert_{L^{\infty}} \mathcal{K} \int_{0}^{t} 
\int_{0}^{+\infty} (\psi(t)-\psi(s))^{q-1} \theta M_q(\theta) ( (\psi(t)
-\psi(s))^{q }\theta+t^q\theta )d\theta\\
&\qquad\qquad\qquad\qquad\qquad \times \psi^\prime(s) ds\\
&\  \leq q \Vert h_\beta \Vert_{L^{\infty}} \mathcal{K} \int_{0}^{t} 
\left[ (\psi(t)-\psi(s))^{2q-1}+ (\psi(t)-\psi(s))^{q-1}t^q)\right] 
\psi^\prime(s)ds  \\
&\qquad \times \int_{0}^{+\infty}  \theta^2 M_q(\theta)  d\theta\\
&\ \leq \dfrac{2 \mathcal{K} \Vert h_\beta \Vert_{L^{\infty}}  
\left[ (\psi(T)-\psi(0))^{2q}+T^q(\psi(T)-\psi(0))^{q}\right] }{\Gamma(2q+1)}.
\end{aligned}
$$
Hence, by Proposition~\ref{ojg} and the compactness of  $ S(t)$, 
we see that the set 
$$ 
G_{\varepsilon,\delta}(t) 
=\lbrace (\Psi_{{\varepsilon,\delta}}
\Theta)(t) \ | \ \Theta \in B_\beta\rbrace
$$ 
is relatively compact in $X$, for all $t >0 $. 

Furthermore,
\begin{equation*}
\begin{aligned}
&\| (\Psi \Theta)(t)-(\Psi_{{\varepsilon,\delta}} \Theta)(t)\|_X \\
&\leq \Bigg\|  \int_{0}^{t} \int_{0}^{\delta} (\psi(t)-\psi(s))^{q-1} q\theta 
M_q(\theta)S((\psi(t)-\psi(s))^{q}\theta) \\
&\qquad\qquad\qquad \times f(s,\Theta(s)) d\theta \psi^\prime(s) 
ds \Bigg\|_X\\
&+\Bigg\|  \int_{t-\varepsilon}^{t} 
\int_{\delta}^{+\infty} (\psi(t)-\psi(s))^{q-1} q\theta 
M_q(\theta)S((\psi(t)-\psi(s))^{q}\theta) \\
&\qquad\qquad\qquad \times f(s,\Theta(s)) d\theta \psi^\prime(s) 
ds \Bigg\|_X\\
& + \left\| \int_{0}^{t-\varepsilon} 
\int_{\delta}^{+\infty} (\psi(t)-\psi(s))^{q-1} q\theta M_q(\theta) 
f(s,\Theta(s))\right. \\
& \ \ \left. 
\times \left(S((\psi(t)-\psi(s))^{q}\theta)
-\frac{S(\varepsilon^q\delta)}{\varepsilon^q\delta}S((\psi(t)-\psi(s))^{q}
\theta-\varepsilon^q\delta) \right)  d\theta \psi^\prime(s) ds \right\|_X\\
& =:O_{4_1}+O_{4_2}+O_{4_3}.
\end{aligned} 
\end{equation*}
By using (H5) and Lemma~\ref{jjjjj}, we obtain that
\begin{equation*}
\begin{aligned}
O_{4_1} &\leq \dfrac{ \mathcal{K} (\psi(t)-\psi(0))^{2q} \Vert 
h_\beta \Vert_{L^{\infty}}}{2} \int_{0}^{\delta}  \theta^2 M_q(\theta) d\theta, \\
O_{4_2}&\leq \dfrac{ \mathcal{K} (\psi(t)-\psi(t-\varepsilon))^{2q} 
\Vert h_\beta \Vert_{L^{\infty}}}{\Gamma(2q+1)}, \\
O_{4_3}  &\leq \Vert h_\beta \Vert_{L^{\infty}}  \int_{0}^{t} 
\int_{0}^{+\infty} (\psi(t)-\psi(s))^{q-1} q\theta M_q(\theta) \\
&\ \  \times \left\| S((\psi(t)-\psi(s))^{q}\theta) 
- \frac{S(\varepsilon^q\delta)}{\varepsilon^q\delta}
S((\psi(t)-\psi(s))^{q}\theta-\varepsilon^q\delta)  
\right\|_{L(X)}  d\theta ds. 
\end{aligned}
\end{equation*}
We get that $O_{4_1} \longrightarrow 0$ as $\delta\longrightarrow 0$  and  
$O_{4_2} \longrightarrow 0$ as $\varepsilon \longrightarrow 0$.
Moreover, 
\begin{equation*}
\begin{aligned}
& \int_{0}^{+\infty} (\psi(t)-\psi(s))^{q-1} q\theta M_q(\theta)\\ 
&\quad \times \left\| S((\psi(t)-\psi(s))^{q}\theta)- \frac{S(\varepsilon^q
\delta)}{\varepsilon^q\delta}S((\psi(t)-\psi(s))^{q}\theta
-\varepsilon^q\delta)  \right\|_{L(X)}\psi^\prime(s)  d\theta \\
&\quad \quad \quad \quad \quad 
\leq \dfrac{ (\psi(t)-\psi(s))^{2q-1}(\mathcal{K} 
+ \mathcal{K}^2)+(\psi(t)-\psi(s))^{q-1}t^q\mathcal{K}^2}{\Gamma(2q)} \psi^\prime(s).
\end{aligned}
\end{equation*}
Thus,
\begin{equation*}
\begin{split}
&\int_{0}^{t} \int_{0}^{+\infty} (\psi(t)-\psi(s))^{q-1} q\theta 
M_q(\theta) \\
&\times \left\| S((\psi(t)-\psi(s))^{q}\theta)
- \frac{S(\varepsilon^q\delta)}{\varepsilon^q\delta}S((\psi(t)
-\psi(s))^{q}\theta-\varepsilon^q\delta)  \right\|_{L(X)}  d\theta \psi^\prime(s)ds
\end{split}
\end{equation*}
is uniformly convergent. In addition, 
\begin{equation*}
\begin{aligned}
&  \left\| S((\psi(t)-\psi(s))^{q}\theta)
- \frac{S(\varepsilon^q\delta)}{\varepsilon^q\delta}S((\psi(t)
-\psi(s))^{q}\theta-\varepsilon^q\delta)  \right\|_{L(X)}\\
&\leq \left\| S((\psi(t)-\psi(s))^{q}\theta) \left( 
\frac{S(\varepsilon^q\delta)}{\varepsilon^q\delta}  
- I \right)  \right\|_{L(X)} \\
&\quad \quad 
+\left\|  \frac{S(\varepsilon^q\delta)}{\varepsilon^q\delta}\left( 
S((\psi(t)-\psi(s))^{q}\theta)-\varepsilon^q\delta) 
- S((\psi(t)-\psi(s))^{q}\theta) \right)  \right\|_{L(X)}\\
&\leq \mathcal{K} (\psi(t)-\psi(0))^{q}\theta  \left\|  
\frac{S(\varepsilon^q\delta)}{\varepsilon^q\delta}  - I \right\|_{L(X)}\\ 
&\quad \quad  + \mathcal{K} \left\|   S(((\psi(t)-\psi(s))^{q}\theta)
-\varepsilon^q\delta) - S((\psi(t)-\psi(s))^{q}\theta)\right\|_{L(X)}.
\end{aligned}
\end{equation*}
Therefore, using Proposition~\ref{jjn} and the strong 
continuity of the sine family, we get that  
$$
\left\| S((\psi(t)-\psi(s))^{q}\theta)- \frac{S(\varepsilon^q\delta)}{
\varepsilon^q\delta}S((\psi(t)-\psi(s))^{q}\theta-\varepsilon^q\delta)  
\right\|_{L(X)} \longrightarrow 0
$$
as $\delta\rightarrow 0$. Thus, 
\begin{multline*}
\int_{0}^{t} \int_{0}^{+\infty} (\psi(t)-\psi(s))^{q-1} q\theta M_q(\theta)\\ 
\times \left\| S((\psi(t)-\psi(s))^{q}\theta)- \frac{S(\varepsilon^q\delta)}{\varepsilon^q\delta}
S((\psi(t)-\psi(s))^{q}\theta-\varepsilon^q\delta)  \right\|_{L(X)}  d\theta \psi^\prime(s)ds
\end{multline*}
converges to $0$ where $\delta\rightarrow 0$. Consequently, 
we have  $O_{4_3} \rightarrow 0$  as $\delta\rightarrow 0$. 
\end{description} 
Finally, we obtain that
\begin{equation}
\left\| (\Psi \Theta)(t)-(\Psi_{{\varepsilon,\delta}} \Theta)(t)
\right\|_X \longrightarrow 0
\end{equation}
when $\varepsilon, \delta \rightarrow 0$.
Therefore, $G_{\varepsilon,\delta}(t)$ is a relatively compact set arbitrarily close 
to the set $G(t)$ for $t>0$. Hence, $G(t)$ is relatively compact.
Using the Arzela--Ascoli theorem and steps $2$ and $3$, we conclude that 
$ \Psi(B_\beta)$ is relatively compact. Furthermore, Schauder's fixed point theorem,  
step $1$, and the relative compactness of $ \Psi(B_\beta)$, allow us to deduce that 
$\Psi$ has a fixed point. Let us now show the uniqueness of that fixed point.
	
\begin{description}
\item[Step 4.] We show the uniqueness of the fixed point of $\Psi$.
Let $\Theta $ and $ \Phi$ be two fixed points of $\Psi$:
$$
\begin{aligned}
\Vert &\Theta(t)-\Phi(t) \Vert_{X} 
=\Vert (\Psi\Theta)(t)-(\Psi\Phi)(t)\Vert_X \\
&= \left\| \int_0^t (\psi(t)-\psi(s))^{q-1}P^\psi_q(t,s) 
[ f(s,\Theta(s))-f(s,\Phi(s))] \psi^\prime(s)  ds \right\|_X \\
&\leq   \int_0^t  (\psi(t)-\psi(s))^{q-1} \Vert P^\psi_q(t,s)\Vert_{L(X)} 
\Vert f(s,\Theta(s)-f(s,\Phi(s)) \Vert_X \psi^\prime(s) ds.
\end{aligned}
$$
From (H3) and Proposition~\ref{aa}, we get
$$
\begin{aligned}
\Vert \Theta(t)-\Phi(t) \Vert_{X} 
&\leq  \frac{\mathcal{K} N_0}{\Gamma(2q)} \int_0^t  (\psi(t)
-\psi(s))^{2q-1}\Vert \Theta(s)-\Phi(s) \Vert_X \psi^\prime(s) ds.
\end{aligned}
$$
By using Gr\"{o}nwall's inequality \eqref{nnn}, we obtain that
$$
\Vert \Theta(t)- \Phi(t) \Vert_{X} =0 \  , \forall  t\in J,
$$
which implies that $ \Theta \equiv \Phi $.
\end{description}
Therefore, the operator $\Psi$ has a unique fixed point, 
which is also the unique mild solution of system \eqref{k}.\proofend
\end{proof}


\section{Particular case}
\label{sec:05}

In this section we give the explicit expression of the mild solution 
for system  \eqref{k} for a particular dynamic $A$. For that we use
a series expansion involving the Mittag-Leffler function. 

Let $\Omega$ be an open and bounded set of $\mathbb{R}^n$, $n\geq 1$. 
We denote by $\partial\Omega$ its boundary, which is supposed to be 
sufficiently smooth. The space $X$ is chosen to be the space of all 
square integrable functions $L^2(\Omega)$, which is a Hilbert space. 
Let $A$ be the second order elliptic 
differential operator given by
\begin{equation}
A\Theta(x,t) = \displaystyle \sum_{i,j=1}^{n}
\frac{\partial}{\partial x_i}\left[ a_{ij}(x)
\frac{\partial\Theta(x,t)}{\partial x_j}\right] +a_0(x)
\Theta(x,t) ,\ \forall x\in \Omega, \ \forall t \in J,
\end{equation}
where the coefficients $a_{ij}$ are in $\ L^{\infty}(\Omega) $ 
for all $  1 \leq  i, j \leq n $, and $a_0$ is also in 
$ L^{\infty}(\Omega)$ such that
\begin{equation}
\label{hhh}
\begin{cases}
a_{i,j}=a_{j,i}, \quad      1 \leq  i, j \leq n ,\\ 
\exists \gamma > 0,\ \forall \iota \in \mathbb{R}^n  \  ,
\displaystyle \sum_{i,j=1}^{n}  a_{ij}(x)\iota_i\iota_j 
\geq \gamma \vert \iota \vert^2.
\end{cases}
\end{equation}
This means that $A$ is symmetric and $-A$ is uniformly elliptic. 
In this case, it is well known that $-A$ has a set of eigenvalues 
$(\lambda_k)_{k\geq1}$ such that
$$ 
0< \lambda_1< \lambda_2<\lambda_3<\lambda_4
<\cdots<\lambda_k<\lambda_{k+1}<\cdots
\rightarrow +\infty. 
$$
Each eigenvalue $\lambda_k$ corresponds to $r_k$ eigenfunctions 
$$
\lbrace\varphi_{k j}\rbrace_{1\leq j \leq r_k}, 
$$
where $r_k \in \mathbb{N}\setminus \{0\}$ is the multiplicity 
of $\lambda_k$ such that 
$$
A\varphi_{kj} = \lambda_k\varphi_{kj}
$$ 
and $\varphi_{kj} \in \mathcal{D}(A)$ for all 
$k \in \mathbb{N}^* $, $1\leq j \leq r_k$.  
Furthermore, the set 
$$
\left\{\varphi_{k j}\right\}_{{\substack{k\geq1\\1\leq r_k\leq k}}}
$$ 
constitutes an orthonormal basis of $X$. For more details see \cite{Floridia}.

The operator $A$ is the infinitesimal generator of a strongly continuous cosine 
family of uniformly bounded linear operators ${(C(t))}_{t\in {\mathbb{R}}}$ in $X$. 
The system  \eqref{k} with Dirichlet boundary conditions can be written as
\begin{equation}
\label{a.par}
\begin{cases}
\mathcal{^CD}^{\alpha ,\psi}_{0^{+}}\Theta(t)
=A \Theta(t)+f(t,\Theta(x,t)), \  
& \text{ in }\  {\Omega\times J},\\		
\Theta(\xi, t)=0, \ & \text{ on }\ \partial\Omega \times J,\\
{\Theta(x,0)=\Theta_{0}(x)} 
\  ;\  \dfrac{\partial}{\partial t}\Theta(x,0)
=\Theta_{1}(x), & \text{ in }  \ \Omega,
\end{cases}
\end{equation}
and it possesses one, and only one, mild solution provided 
that (H1)--(H2) or (H3)--(H5) are satisfied. 
Moreover, it has the following form:
\begin{equation}
\begin{aligned}
\Theta(x,t)
&=\displaystyle  \sum_{k=1}^{+\infty} \sum_{j=1}^{r_{k}} E_{2q,1}\left( (\psi(t)-\psi(0))^{2q} 
\lambda_k\right) \left\langle \Theta_0 , \varphi_{ kj} \right\rangle_{X} \varphi_{ kj}(x)\\
&+ \left[ \frac{(\psi(t)-\psi(0))}{\psi^\prime(0)}\right]  
\sum_{k=1}^{+\infty} \sum_{j=1}^{r_{k}} 
\left\langle \Theta_{1} , \varphi_{kj} \right\rangle_{X} 
\varphi_{kj}(x) E_{2q,2}\left(  (\psi(t)-\psi(0))^{2q} \lambda_k \right)\\
&+\displaystyle \sum_{k=1}^{+\infty}\sum_{j=1}^{r_k} 
\int_0^t  (\psi(t)-\psi(s))^{2q-1}  
E_{2q,2q}\left(\lambda_{k}
(\psi(t)-\psi(s))^{2q}\right) \\
&\qquad\qquad\qquad \times \left\langle f(s,\Theta(\cdot,s)),
\varphi_{kj} \right\rangle_{X} \psi^\prime(s) ds  \varphi_{kj}(x),
\end{aligned}
\end{equation}
where $ \left\langle \cdot , \cdot \right\rangle_{X} $ is the inner product in $X$.
Indeed, the system \eqref{a.par} has one and only one mild solution of the following form:
\begin{equation}
\label{ag}
\Theta(t)= C^\psi_q(t,0) \Theta_{0} + R^\psi_q(t,0) \Theta_{1} 
+ \int_0^t (\psi(t)-\psi(s))^{q-1} P^\psi_q(t,s)f(s,\Theta(s))\psi^\prime(s)  ds.
\end{equation}
From \eqref{Bouaaaa}, we can write the cosine family  
${(C(t))}_{t\in {\mathbb{R}}}$ as
$$  
C(t)\Theta_0(x) =\displaystyle \sum_{n=0}^{+\infty} \sum_{k=1}^{+\infty} 
\sum_{j=1}^{r_{k}}  \frac{t^{2n} \lambda_k^n}{(2n)!} \left\langle 
\Theta_0 , \varphi_{ kj} \right\rangle_{X} \varphi_{ kj}(x).
$$
Thus, 
$$
\begin{aligned}
C^\psi_q(t,0) &\Theta_{0}(x) \\ 
&=  \displaystyle\int_{0}^{\infty} M_q({\theta}) 
C{((\psi(t)-\psi(0))^q\theta)}\Theta_{0}(x) d\theta\\
&=  \int_{0}^{\infty} M_q({\theta}) \displaystyle 
\sum_{n=0}^{+\infty} \sum_{k=1}^{+\infty} \sum_{j=1}^{r_{k}}  
\frac{\left[ (\psi(t)-\psi(0))^q\theta\right]^{2n} 
\lambda_k^n}{2n!} \left\langle \Theta_0 , 
\varphi_{ kj} \right\rangle_{X} \varphi_{ kj}(x) d\theta\\
&= \displaystyle  \sum_{k=1}^{+\infty} 
\sum_{j=1}^{r_{k}} E_{2q,1}\left( (\psi(t)-\psi(0))^{2q} 
\lambda_k\right) \left\langle \Theta_0 , \varphi_{ kj} 
\right\rangle_{X} \varphi_{ kj}(x) ,
\end{aligned}
$$
$$
\begin{aligned}
R&^\psi_q(t,0)\Theta_{0}(x)\\  
&= \sum_{k=1}^{+\infty} \sum_{j=1}^{r_{k}}\left\langle 
\Theta_{0} , \varphi_{ kj}\right\rangle_{X} \varphi_{ kj}(x)
\int_0^t \frac{\psi^\prime(s)}{\psi^\prime(0)}  
E_{2q,1}\left( (\psi(s)-\psi(0))^{2q} \lambda_k\right) ds\\
&= \left[ \frac{(\psi(t)-\psi(0))}{\psi^\prime(0)}\right]  
\sum_{k=1}^{+\infty} \sum_{j=1}^{r_{k}} \left\langle 
\Theta_{0} , \varphi_{kj} \right\rangle_{X} \varphi_{kj}(x) 
\sum_{n=0}^{+\infty} \frac{\left[ 
(\psi(t)-\psi(0))^{2q} \lambda_k\right]^n}{\Gamma(2qn+2)}\\
&=\left[ \frac{(\psi(t)-\psi(0))}{\psi^\prime(0)}\right]  
\sum_{k=1}^{+\infty} \sum_{j=1}^{r_{k}} \left\langle \Theta_{0}, 
\varphi_{kj} \right\rangle_{X} \varphi_{kj}(x) 
E_{2q,2}\left(  (\psi(t)-\psi(0))^{2q} \lambda_k \right),
\end{aligned}
$$
and 
$$
\begin{aligned}
P&^\psi_{q}(t,\tau)\Theta_{1}(x)\\  
&= \displaystyle\int_{0}^{\infty} q \theta M_q({\theta}) 
S{((\psi(t)-\psi(\tau))^q\theta)}\Theta_{1}(x) d\theta \\
&=  \int_{0}^{\infty} q \theta M_q({\theta}) 
\int_{0}^{(\psi(t)-\psi(\tau))^q\theta} C(s)\Theta_{1}(x) ds d\theta \\
& =  \displaystyle\sum_{k=1}^{+\infty} \sum_{j=1}^{r_{k}} 
\sum_{n=0}^{+\infty} \left\langle \Theta_{1}, \varphi_{ kj} 
\right\rangle_{X} \varphi_{ kj}(x)  \int_{0}^{\infty} q \theta 
M_q{(\theta)} \frac{\lambda_k^n((\psi(t)
-\psi(\tau))^q\theta)^{2n+1}}{2n!(2n+1)} d\theta\\
&= (\psi(t)-\psi(\tau))^{q} \sum_{k=1}^{+\infty} \sum_{j=1}^{r_{k}}  
E_{2q,2q}\left(\lambda_{k} (\psi(t)-\psi(\tau))^{2q}\right) 
\varphi_{ kj}(x) \left\langle \Theta_{1} , \varphi_{j k} \right\rangle_{X}.
\end{aligned}
$$
From equation \eqref{ag}, we get that
\begin{equation*}
\begin{aligned}
\Theta(x,t)
&=\displaystyle  \sum_{k=1}^{+\infty} \sum_{j=1}^{r_{k}} 
E_{2q,1}\left( (\psi(t)-\psi(0))^{2q} \lambda_k\right) 
\left\langle \Theta_0 , \varphi_{ kj} \right\rangle_{X} \varphi_{ kj}(x)\\
&+ \left[ \frac{(\psi(t)-\psi(0))}{\psi^\prime(0)}\right] 
\sum_{k=1}^{+\infty} \sum_{j=1}^{r_{k}} \left\langle \Theta_{1}, 
\varphi_{kj} \right\rangle_{X} \varphi_{kj}(x) 
E_{2q,2}\left(  (\psi(t)-\psi(0))^{2q} \lambda_k \right)\\
&+\displaystyle \sum_{k=1}^{+\infty}\sum_{j=1}^{r_k} 
\int_0^t  (\psi(t)-\psi(s))^{2q-1}  E_{2q,2q}\left(\lambda_{k}
(\psi(t)-\psi(s))^{2q}\right) \\
&\qquad\qquad\qquad\qquad \times 
\left\langle f(s,\Theta(\cdot,s)),
\varphi_{kj} \right\rangle_{X} \psi^\prime(s) ds  \varphi_{kj}(x).
\end{aligned}
\end{equation*}


\section{Example}
\label{sec:06}

Let us take $\Omega = \, ]0,1[$ and $X = L^2(]0,1[)$. 
Consider the following time-fractional evolution diffusion system:
\begin{equation}
\label{exp}
\begin{cases}
\mathcal{^CD}^{\frac{3}{2},\psi(t)}_{0^{+}}\Theta(x,t)=A \Theta(x,t)+f(t ,\Theta(x,t)) \  
&\ \text{ in }  \ {\Omega \times [0,1]}, \\
\Theta(\xi, t)=0 \ &\ \text{ on }  \ \partial \Omega \times [0,1],\\ 		
\Theta(x,0)=\Theta_0(x) \ ;\dfrac{\partial}{\partial t} \Theta(x,0)=\Theta_{1}(x) \ 
& \ \text{ in }  \ \Omega,
\end{cases}
\end{equation}
where 
\begin{equation*}
\begin{split}	
f(t,\Theta(x,t))
&= \dfrac{1}{5}e^{-5t} \Theta(x,t) + t^{4} \sin(\Theta(x,t)),\\ 
\psi(t) &=\sqrt{t+2},
\end{split}
\end{equation*}
and $A=\Delta =\dfrac{\partial^2}{\partial x^2}$ 
is the Laplace operator. The Laplace operator 
$ \Delta $, together with Dirichlet boundary conditions, 
is considered with the domain
$$ 
\mathcal{D}(A)=\lbrace \sigma \in  \operatorname{H}_0^1(\Omega) \ , \ 
A \sigma \in L^2(\Omega) \rbrace 
= \operatorname{H}^2(\Omega)\cap \operatorname{H}_0^1(\Omega).
$$
In this case the operator $\Delta$  generates a uniformly bounded 
strongly continuous cosine family ${\lbrace C(t)\rbrace}_{t\geq0}$.
It is well known that $A$ has a discrete spectrum and its eigenvalues are  
$ \lambda_j $ for every $j\in \mathbb{N}\setminus \{0\}$ 
with the corresponding normalized eigenvectors $ \xi_j(x)$ 
for every $j\in \mathbb{N}\setminus \{0\}$.

We know that $\Vert C(t) \Vert_{L(X)} \leq 1 $    
and $S(t)$ is compact for all $t\geq  0$ \cite{zhou.new}. 
Hence, (H4) is true and we can take $\mathcal{K} =1$.

Let us consider $ \Theta ,\tilde{\Theta}(\cdot,t) \in C([0,1];X) $ 
and $t\in[0,1]$. We can see that
$$
\begin{aligned}
\Vert f&(t,\Theta(\cdot,t))-f(t,\tilde{\Theta}(\cdot,t))\Vert_X\\ 
&\leq \frac{1}{5}e^{-5t} \Vert \Theta(\cdot,t)-\tilde{\Theta}(\cdot,t) \Vert_X
+ t^{4}\Vert \sin(\Theta(\cdot,t))-\sin(\tilde{\Theta}(\cdot,t))\Vert_X\\
&\leq \frac{6}{5} \Vert \Theta(\cdot,t)-\tilde{\Theta}(\cdot,t) \Vert_X,
\end{aligned}
$$
which implies that condition (H1) is satisfied with $N_0 = \dfrac{6}{5}$.
Note that (H2) holds. Indeed,
$$
\frac{ N_0 \mathcal{K} (\psi(T)-\psi(0))^{2q} }{ \Gamma(2q+1)} 
=\frac{\frac{6}{5}(\sqrt{3}-\sqrt{2})^{\frac{3}{2}}}{\Gamma(\frac{5}{2})} < 1 .
$$
Since $B_\beta\subset C([0,1];X)$, for all $\beta$, we can also choose 
$N_\beta = N_0 = \dfrac{6}{5}$, which means that (H3) is true. 
Let $\Theta$ be an element of $B_\beta$. Then,
$$
\begin{aligned}
\Vert f(t,\Theta(\cdot,t)) \Vert_X 
&=\left\Vert \frac{1}{5}e^{-5t} \Theta(\cdot,t) 
+ t^{4} \sin(\Theta(\cdot,t))\right\Vert_X\\
&\leq \frac{1}{5}e^{-5t} \Vert \Theta(\cdot,t)\Vert_X  
+ t^{4} \Vert \sin(\Theta(\cdot,t))\Vert_X\\
&\leq \frac{1}{5}e^{-5t} \Vert \Theta(\cdot,t)\Vert_X + t^{4} \pi,
\end{aligned}
$$
which means that
\begin{equation}
\sup_{\Vert \Theta(\cdot,t) \Vert_X 
\leq  \beta }\Vert f(t,\Theta(\cdot,t)) \Vert_X 
\leq \frac{1}{5}e^{-5t} \beta + t^{4} \pi =:h_\beta(t).
\end{equation}	
Hence,
\begin{equation}
\lim_{\beta\longrightarrow + \infty } 
\displaystyle\sup_{t\in ]0,1]} 
\frac{\Vert h_\beta(t) \Vert_{L^{\infty}}}{\beta} 
= \frac{1}{5}=:\mathcal{H}. 
\end{equation}
Therefore the condition (H5) holds. This yields
$$
\frac{ \mathcal{H} \mathcal{K} (\psi(T)-\psi(0))^{2q} }{ \Gamma(2q+1)} 
=\frac{\frac{1}{5}(\sqrt{3}-\sqrt{2})^{\frac{3}{2}}}{\Gamma(\frac{5}{2})} < 1 .
$$
Thus, according to either Theorem~\ref{jjjj} or Theorem~\ref{jjj}, 
the system \eqref{exp} has a unique mild solution, given by the following formula: 
\begin{equation*}
\begin{aligned}
\Theta(x,t)
&=\displaystyle  \sum_{k=1}^{+\infty}  
E_{2q,1}\left( (\sqrt{t+2}-\sqrt{2})^{2q} \lambda_k\right) 
\left\langle \Theta_0 , \varphi_{ k} \right\rangle_{X} \varphi_{ k}(x)\\
&\quad+ \left[2\sqrt{2}(\sqrt{t+2}-\sqrt{2}) \right]  
\sum_{k=1}^{+\infty}  \left\langle \Theta_{1} , \varphi_{k} \right\rangle_{X} 
\varphi_{k}(x) \\
&\qquad\quad \times E_{2q,2}\left( (\sqrt{t+2}-\sqrt{2})^{2q} \lambda_k \right)\\
&\quad +\displaystyle \sum_{k=1}^{+\infty} \int_0^t  (\sqrt{t+2}-\sqrt{s+2})^{2q-1}  
E_{2q,2q}\left(\lambda_{k} (\sqrt{t+2}-\sqrt{s+2})^{2q}\right) \\
&\qquad\qquad\qquad \times \left\langle 
f(s,\Theta(\cdot,s)),\varphi_{k} \right\rangle_{X} 
\dfrac{\varphi_{k}(x)}{2\sqrt{s+2}} ds.
\end{aligned}
\end{equation*}


\section{Conclusion}
\label{sec:07}

In this manuscript, we have formulated a mild solution for a class of fractional 
evolution systems featuring a $\psi$-Caputo fractional derivative with an order 
within the range of $]1,2]$. Our approach involves utilizing the Laplace 
transform with respect to the $\psi$-function. Additionally, we have highlighted 
the connection between the mild solution of the evolution system and the probability 
density function associated with it, enhancing our comprehension of fractional calculus.
Moreover, by employing fixed-point arguments, we have established conditions that ensure 
both the existence and uniqueness of the mild solution within a suitable spatial framework. 
To show the practical applicability of our theoretical conditions, we presented 
an illustrative example.


\section*{Declarations}

\subsection*{Funding}

This work was partially funded by Funda\c{c}\~{a}o para a Ci\^{e}ncia 
e a Tecnologia, I.P. (FCT, Funder ID = 50110000187) under 
CIDMA (\url{https://ror.org/05pm2mw36}) grants UID/04106/2025. 
Tajani and Torres are also supported through project CoSysM3, 
Reference 2022.03091.PTDC (\url{https://doi.org/10.54499/2022.03091.PTDC}), 
financially supported by national funds (OE) through FCT/MCTES.


\subsection*{Competing interests}

The authors have no competing interests 
to declare that are relevant to the content of this article.

\begin{acknowledgements}
The authors would like to express their gratitude to reviewers
for several constructive comments.
\end{acknowledgements}



\end{document}